\newcommand{\diag}{\operatorname{diag}}
\def\gg{\mathfrak{g}}
\def\gm{\mathfrak{m}}
\def\gp{\mathfrak{p}}
\def\gX{\mathfrak{X}}
\def\gX{\mathfrak{X}}
\def\ggg{> \hskip -5 pt >}
\def\rank{{\rm rank\,}}
\def\pos{{\rm pos}}
\def\neg{{\rm neg}}
\def\nul{{\rm nul}}
\def\Span{{\rm Span}\,}
\def\tr{^t\hskip -2pt}
\def\C{\mathbb{C}}
\def\F{\mathbb{F}}
\def\H{\mathbb{H}}
\def\N{\mathbb{N}}
\def\R{\mathbb{R}}
\def\Z{\mathbb{Z}}
\def\cB{\mathcal{B}}
\def\cC{\mathcal{C}}
\def\cF{\mathcal{F}}
\def\cG{\mathcal{G}}
\def\cI{\mathcal{I}}
\def\cJ{\mathcal{J}}
\def\cL{\mathcal{L}}
\def\cM{\mathcal{M}}
\def\cO{\mathcal{O}}
\def\cP{\mathcal{P}}
\def\cZ{\mathcal{Z}}
\newtheorem{theorem}[equation]{Theorem}
\newtheorem{lemma}[equation]{Lemma}
\newtheorem{corollary}[equation]{Corollary}
\newtheorem{proposition}[equation]{Proposition}
\newtheorem{definition}[equation]{Definition}
\newtheorem{example}[equation]{Example}
\newtheorem{remark}[equation]{Remark}
\def\sideremark#1{\ifvmode\leavevmode\fi\vadjust{\vbox to0pt{\vss
 \hbox to 0pt{\hskip\hsize\hskip1em
\vbox{\hsize2cm\tiny\raggedright\pretolerance10000 
 \noindent #1\hfill}\hss}\vbox to8pt{\vfil}\vss}}} 
\title{Cycle Spaces of Infinite Dimensional Flag Domains}
\author{Joseph A. Wolf\footnote{Research partially supported by a Dickson 
Emeriti Professorship and by a Simons Foundation grant.}}
\date{April, 2016}
\begin{document}
\maketitle

\abstract{Let $G$ be a complex simple direct limit group, specifically
$SL(\infty;\C)$, $SO(\infty;\C)$ or $Sp(\infty;\C)$.  Let 
$\cF$ be a (generalized) flag in $\C^\infty$.  If $G$ is  
$SO(\infty;\C)$ or $Sp(\infty;\C)$ we suppose further that $\cF$ is
isotropic.  Let $\cZ$ denote the corresponding flag manifold; thus 
$\cZ = G/Q$ where $Q$ is a parabolic subgroup of $G$.  In a recent paper
\cite{IPW2015} we studied real forms $G_0$ of $G$ and properties of their orbits
on $\cZ$.  Here we concentrate on open $G_0$--orbits $D \subset \cZ$.  When
$G_0$ is of hermitian type we work out the complete $G_0$--orbit structure of
flag manifolds dual to the bounded symmetric domain for $G_0$\,.  Then we
develop the structure of the corresponding cycle spaces $\cM_D$.  Finally we
study the real and quaternionic analogs of these theories.  All this extends 
results from the finite dimensional cases on the structure of hermitian 
symmetric spaces and cycle spaces (in chronological order:
\cite{W1969}, \cite{WW1977}, \cite{W1992}, \cite{W1994}, \cite{WZ2000},
\cite{W2000}, \cite{HS2001}, \cite{HW2002}, \cite{WZ2003}, \cite{HW2010}).}

\section{Introduction.}\label{sec1}
\setcounter{equation}{0}

The object of this paper is the study of certain infinite dimensional 
bounded symmetric domains and the related cycle spaces for open real group 
orbits on complex flag manifolds.  The cycle space theory is well understood 
in the finite dimensional setting (in chronological order: 
\cite{W1969}, \cite{WW1977}, \cite{W1992}, 
\cite{W1994}, \cite{WZ2000}, \cite{W2000}, \cite{HS2001}, \cite{HW2002}, 
\cite{WZ2003}, \cite{HW2010}).  Here we initiate its extension to infinite 
dimensions.  Specifically,
we look at the action of real reductive direct limit groups, 
$G_0$ such as $SL(\infty;\R)$, $SO(\infty,\infty)$, $Sp(\infty,q)$, 
or $Sp(\infty;\R)$, on a class of direct
limit complex flag manifolds $\cZ = G/Q$, where $G$ is the complexification 
of $G_0$\,.  While the classical finite dimensional 
setting \cite{W1969} is the guide, the results in infinite dimensions 
are much more delicate, and often different.  See \cite{IPW2015}, as
indicated below. 
In fact there are even stringent requirements for the existence of open 
$G_0$--orbits on $\cZ$.  In all cases where $G_0$ is the group of an 
hermitian symmetric space we work out a complete structure theory for the cycle
spaces of open orbits in our class of flag manifolds.  That structure is 
explicit in terms of the bounded symmetric domains of the $G_0$\,.
\smallskip

In Section \ref{sec2} we review the basic facts about our class of infinite
dimensional complex Lie groups, their construction, their flag manifolds, and
their real forms.   We note \cite{IPW2015} 
that every $G_0$--orbit on $\cZ$ is infinite dimensional, and we
describe just when the number of $G_0$--orbits on $\cZ$ is finite. 
\smallskip

In Section \ref{sec3} we concentrate on the cases where $G_0$ is a 
special linear group or is defined by a
bilinear or hermitian form.  We then recall foundational results from
\cite{IPW2015} and
describe a notion of nondegeneracy for flags $\cF \in \cZ$ (even in the 
cases $G_0 = SL(\infty;\R)$ and $G_0 = SL(\infty;\H$)). We use 
nondegeneracy to determine
which $G_0$--orbits are open, and in fact  and whether there are any open 
$G_0$--orbits.
\smallskip

In Section \ref{sec4} we develop a complete structure theory for the finitary
infinite dimensional bounded symmetric domains.  The results are similar to the
classical finite dimensional results, but one has to be careful about the
details.  We obtain complete extensions of the orbit structure (in 
particular the boundary structure) from the finite dimensional cases 
(\cite{KW1965}, \cite{WK1965}, \cite{W1969}).
\smallskip

Then in Section \ref{sec5} we initiate the study of cycle spaces of the 
open $G_0$--orbits on $\cZ$.  We start with the important case of 
$G_0 = SU(\infty,q)$, $q \leqq \infty$, using an idea from the finite 
dimensional setting.  We show how that idea leads to a precise description 
of the cycle space more generally.  This is the start of a program to extend
results of \cite{FHW2005} to infinite dimensions.
This study raises many important questions and initiates several
promising lines of research.  Compare \cite{FHW2005}.
\smallskip

One could carry out the considerations of Sections \ref{sec4} and \ref{sec5}
in a more unified way, but there are many small differences of technical 
detail, so it would not be advantageous.
\smallskip

Finally in Section \ref{sec6} we carry some of 
the results of Sections \ref{sec4} and
\ref{sec5} over to certain real and quaternionic bounded symmetric domains.
As noted in \cite{SW1975} this has some physical interest.
\smallskip

This study grew out of a joint project \cite{IPW2015}
with Ivan Penkov and Mikhail Ignatyev, where we studied real forms $G_0$ of 
$SL(\infty;\C)$ and the basic properties of their orbits on flag varieties 
$\cZ$.  I thank Ivan Penkov for important discussions on early versions of this
manuscript, and I thank the referee for the publication version of this paper
for useful critical comments.

\section{Basics.}\label{sec2}
\setcounter{equation}{0}

In this section we review some basic facts about our class of infinite
dimensional real and complex Lie groups, complex flag manifolds, and
real group orbits.

\subsection{Direct Limit Groups.}\label{ssec2a} 
\setcounter{equation}{0}
\smallskip

Let $V$ be a countable dimensional complex vector space and $E$ a fixed
basis of $V$.  We fix a linear order on $E$, specifically by $\N = \Z^+$,
where $E = \{e_1, e_2, \dots \}$.  When we come to flags and parabolics we
will consider other orders on $E$, but we use the given order by $\Z^+$
to define our groups and our exhaustions of $V$.
\smallskip

Let $V_*$ denote the span of the dual system
$\{e_1^*, e_2^*, \dots \}$; we view $V_*$ as the restricted
dual of $V$.  The group $GL(V,E)$ is the group of invertible linear
transformations on $V$ that keep fixed all but finitely many elements
of $E$.  It is easy to see that $GL(V,E)$ depends only on the pair $(V,V_*)$
as long as $V_*$ is constructed from $E$.
\smallskip

Express the basis $E$ as an increasing union $E = \bigcup E_n$ of finite
subsets.  That exhausts $V$ by finite dimensional subspaces 
$V_n = \Span\{E_n\}$, $V = \varinjlim V_n$\,, and thus expresses 
$GL(V,E)$ as $\varinjlim GL(V_n)$ and $SL(V,E)$ as $\varinjlim SL(V_n)$.
When we write $GL(\infty;\C)$ or $SL(\infty;\C)$ we must have in mind such 
an associated exhaustion of $V$ by finite dimensional subspaces.
\smallskip

For the orthogonal and symplectic groups,
$V$ is endowed with a nondegenerate symmetric or antisymmetric bilinear
form  $b$ that is related to $E$ as follows:  We can choose the
increasing union $E = \bigcup E_n$ so that the
$V_n = \Span\{E_n\}$ are nondegenerate for $b$, and so that
$b(e_m,V_n) = 0$ for $e_m \notin E_n$.  Thus 
$O(V,E,b) = \varinjlim O(V_n,b|_{V_n})$ 
when $b$ is symmetric, and
$Sp(V,E,b) = \varinjlim Sp(V_n,b|_{V_n})$ when $b$ is antisymmetric.
Again, when we write $O(\infty;\C)$, $SO(\infty;\C)$ or $Sp(\infty;\C)$
we must have in mind such an associated exhaustion of $V$ by finite 
dimensional $b$--nonsingular subspaces.

\subsection{Flags.}\label{ssec2b}
\setcounter{equation}{0}
\smallskip
We now recall some basic definitions from \cite{DP2004}.
A {\bf chain} of subspaces in $V$ is a
set $\cC$ of distinct subspaces such that if $F, F' \in \cC$ then either
$F \subset F'$ or $F' \subset F$.  We write $\cC'$ (resp. $\cC''$) for the
subchain of all $F \in \cC$ with an immediate successor (resp. immediate
predecessor).  Also, we write $\cC^\dagger$ for the set of all pairs $(F',F'')$
where $F'' \in \cC''$ is the immediate successor of $F' \in \cC'$.
\smallskip

Let $\cF$ be a chain, and let $\cF'$ and $\cF''$ be defined as just above.
Then $\cF$ is a {\bf generalized flag} if $\cF = \cF' \cup \cF''$ and 
$V\setminus \{0\} = \bigcup_{(F',F'') \in \cF^\dagger} (F'' \setminus F')$.
Note that $0 \ne v \in V$ determines 
$(F',F'') = (F'_v,F''_v) \in \cF^\dagger$ such that
$v \in F'' \setminus F'$.  If $\cF$ is a generalized flag then each of $\cF'$
and $\cF''$ determines $\cF$:
$$
\text{ if } (F',F'') \in \cF^\dagger \text{ then } 
        F' = {\bigcup}_{G'' \in \cF'', G'' \subsetneqq F''} G'' \text{ and }
        F'' = {\bigcap}_{G' \in \cF', G' \supsetneqq F'} G'.
$$

A generalized flag $\cF$ is {\bf maximal} if it is not properly contained in
another generalized flag.  This is equivalent to the condition that
$\dim F''_v/F'_v = 1$ for all $0 \ne v \in V$.
\smallskip

A generalized flag is 
a {\bf flag} if, as a linearly ordered set, the proper subspaces of 
$\cF$ are isomorphic to a linearly ordered subset of $\Z$, so that
we don't have to deal with limit ordinals.
\smallskip

In the orthogonal and symplectic cases,
we say that a generalized flag $\cF$ in $V$ is {\bf isotropic} (relative to
$b$) if $b(F,F) = 0$ for every $F \in \cF$.  This is equivalent to the 
notion in \cite{IPW2015}, where ``isotropic'' is defined to mean that 
$\tau: F \mapsto F^\perp$ (relative to $b$) is an order--reversing
involution of $\cF$, so that $(F',F'') \in \cF^\dagger$ if and only if
$((F'')^\perp,(F')^\perp) \in \cF^\dagger$.   In effect, if $\cF = (F_\alpha)$
is isotropic in the sense of this paper then $\cF \cup \cF^\perp :=
\cF \bigcup \{F^\perp \mid F \in \cF\}$ is isotropic in the sense of
\cite{IPW2015}, and if $\cJ = \{J_\beta\}$ is isotropic in the sense of
\cite{IPW2015} then $\{J_\alpha \mid J_\alpha \subset J_\alpha^\perp\}$ is
isotropic in the current sense.
\smallskip

A partial order  $\prec$ on a basis $E$ of $V$ is called {\bf strict} if
$\beta \prec \alpha$ implies $\beta \ne \alpha$, and
$\beta \preceq \alpha$ means that either $\beta \prec \alpha$ or
$\beta = \alpha$.  We emphasize that this is only a partial order, not a 
linear order, and there may be elements of the index set that are not 
comparable under $\prec$.  In particular $\prec$ need not be the same as
any order with which $E$ is presented.  See Example \ref{grass-ex} below.

\begin{definition}\label{compatible}{\em
A generalized flag $\cF$ is {\bf compatible} with $E$ if there exists a
strict partial order $\prec$ on $E$ for which every pair $(F',F'')$
is a pair $(\Span\{e_\beta \mid \beta \prec \alpha\}\,,\,
\Span\{e_\beta \mid \beta \preceq \alpha\})$ or a pair
$(0,\Span\{e_\beta \mid \beta \preceq \alpha\})$.
If $\cF$ is isotropic in the sense that each $F_\alpha$ is either isotropic
or coisotropic, then in addition we require that $E$ be isotropic.

A generalized flag $\cF$ is
{\bf weakly compatible} with $E$ if it is compatible with a basis $L$
of $V$ where $E\setminus (E\cap L)$ is finite.

A subspace $F \subset V$ is {\bf (weakly) compatible} with $E$ if the 
generalized flag $(0,F,V)$ is (weakly) compatible with $E$.

Generalized flags $\cF$ and $\cG$ are 
$E$--{\bf commensurable} if they are both weakly compatible
with $E$ and there is a bijection $\varphi: \cF \to \cG$ and a finite
dimensional $U \subset V$ such that each $F \subset \varphi(F)+U$,
$\varphi(F) \subset F + U$, and $\dim(F\cap U) = \dim(\varphi(F) \cap U)$.
$E$--commensurability is an equivalence relation.}
\hfill $\diamondsuit$
\end{definition}

\begin{example} \label{grass-ex}{\em 
This is the example that we'll need to discuss bounded symmetric domains.
Let $\cF = (0 \subset F \subset V)$.  We divide the index set $A$ of the
basis $E$ as
$A = A_1 \cup A_2$ where $A_1 = \{\alpha \mid e_\alpha \in F\}$.  Let
$\prec$ be any partial order on $A$ such that (i)
$\alpha_1 \prec \alpha_2$ whenever $\alpha_1 \in A_1$ and $\alpha_2 \in A_2$
and (ii) $A_i$ has a maximal element $\gamma_i$ in the sense that
$\alpha \prec \gamma_i$ whenever $\gamma_i \ne \alpha \in A_i$\,.  Then
$(0,F) = (0, \Span\{e_\beta \mid \beta \preceq \gamma_1\})$ (by convention on
pairs with $F' = 0$) and
$(F,V) = (\Span\{e_\beta \mid \beta \prec \gamma_2\}\,,
\Span\{e_\beta \mid \beta \preceq \gamma_2\})$, so $\cF$ is compatible
with $E$.

This example extends to generalized flags of 
the form $(0 \subset F_1 \subset \dots \subset F_\ell \subset V)$,
with only the obvious changes.}
\hfill $\diamondsuit$
\end{example}

Fix a generalized flag $\cF$ compatible with $E$.  
If $E$ is $b$--isotropic suppose that $\cF$ is isotropic.
Then $\cZ = \cZ_{\cF,E}$ denotes the {\bf flag manifold} $G/Q$ where $Q$ 
is the parabolic $\{g \in G \mid g(F) = F \text{ for all }F \in \cF\}$.  
If $E$ is isotropic we'll write $\cZ = \cZ_{\cF,b,E}$ for $G/Q$ where
$Q = Q_\cF$ is the stabilizer of $\cF$ in $G$.  As noted in Section 
\ref{ssec2c},
$\cZ_{\cF,E}$ is a holomorphic direct limit of finite dimensional complex
flag manifolds, so $\cZ_{\cF,E}$ has the structure of complex manifold.
\smallskip

Theorem 6.2 in \cite{DP2004} says

\begin{lemma}\label{wk-compat}
Let $\cF$ be a generalized flag that is weakly compatible with $E$.
If $G = SO(V,E,b)$ or $G = Sp(V,E,b)$ suppose further that $\cF$ is
isotropic.
If $g \in G$, then $g(\cF)$ is $E$--commensurable to $\cF$.
If $\cF$ and $\cL$ are $E$--commensurable then there is an element 
$g \in G$ such that $\cL = g(\cF)$.
\end{lemma}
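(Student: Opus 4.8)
The plan is to prove the two implications separately, the first by a direct computation and the second carrying the real content.

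\emph{First implication.} Suppose $g \in G$. Since $G = \varinjlim G_n$, we have $g \in G_n$ for some $n$; equivalently $g$ preserves $V_n = \Span E_n$ (which is $b$--nondegenerate in the orthogonal and symplectic cases) and fixes every $e_\alpha \notin E_n$. I would take $U = V_n$ and $\varphi = g|_\cF \colon \cF \to g(\cF)$, an isomorphism of ordered chains. For $v \in V$, writing $v = u + w$ with $u \in U$ and $w \in \Span(E \setminus E_n)$, one has $gv - v = gu - u \in U$; applying this to $v \in F$ gives $F \subset g(F) + U$ and $g(F) \subset F + U$. Since $U$ is $g$--stable and $g$ injective, $g(F \cap U) = g(F) \cap U$, so $\dim(F \cap U) = \dim(g(F) \cap U)$. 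Thus all three conditions of $E$--commensurability hold.

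\emph{Second implication.} Given the data $(\varphi, U)$ witnessing that $\cF$ and $\cL$ are $E$--commensurable, I would first enlarge $U$ to a finite--dimensional $\widehat U = \Span \widehat E$ with $\widehat E \subset E$ finite, $U \subseteq \widehat U$ (and $\widehat U$ $b$--nondegenerate in the isotropic case), chosen so that, writing $W = \Span(E \setminus \widehat E)$ and $V = \widehat U \oplus W$, every $F \in \cF$ and every $L \in \cL$ splits as $F = (F \cap \widehat U) \oplus (F \cap W)$ and $L = (L \cap \widehat U) \oplus (L \cap W)$. This is possible because $\cF$ and $\cL$ are (weakly) compatible with $E$, so each is split by basis--spanned subspaces; one takes $\widehat E$ large enough to also absorb the finite difference between $E$ and the actual compatibility bases.

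With such a $\widehat U$ fixed, projecting modulo $\widehat U$ onto $V/\widehat U \cong W$ and using $F \subset \varphi(F) + U \subseteq \varphi(F) + \widehat U$ together with $\varphi(F) \subset F + \widehat U$, the images of $F$ and $\varphi(F)$ in $W$ coincide; by the splitting this gives the key equality of tails $F \cap W = \varphi(F) \cap W$. It then remains to match the finite parts. The chains $\{F \cap \widehat U\}_{F \in \cF}$ and $\{\varphi(F) \cap \widehat U\}_{F \in \cF}$ are finite flags in $\widehat U$, and the dimension clauses of $E$--commensurability (via the modular law, since $U \subseteq \widehat U$) force them to be of the same type. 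Hence, by transitivity of $GL(\widehat U)$ on flags of a fixed type --- replaced by Witt's theorem and the transitivity of $O(\widehat U,b)$, resp.\ $Sp(\widehat U,b)$, on isotropic flags of a fixed type in the $b$--case --- I can choose $g_0$ in the relevant finite group with $g_0(F \cap \widehat U) = \varphi(F) \cap \widehat U$ for all $F$. Setting $g = g_0 \oplus \mathrm{id}_W$, the map $g$ fixes $E \setminus \widehat E$ pointwise, so $g \in G$, and it is a $b$--isometry when required because $\widehat U \perp W$. Finally $g(F) = g_0(F \cap \widehat U) \oplus (F \cap W) = (\varphi(F) \cap \widehat U) \oplus (\varphi(F) \cap W) = \varphi(F)$, whence $g(\cF) = \cL$.

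I expect the main obstacle to be the simultaneous splitting step: arranging a single finite $\widehat E$ for which all, possibly infinitely many, members of both chains split along $\widehat U \oplus W$ while the strict partial orders underlying compatibility of $\cF$ and of $\cL$ are respected at once, so that the tails genuinely match and the finite flags share a type. Controlling the generalized flags away from $\widehat U$ --- their possible limit behaviour --- while keeping the discrepancy from $E$ finite is the delicate point. In the isotropic case there is the added requirement that $\widehat U$ be taken $b$--nondegenerate and the finite matching be realized inside the isometry group, which is exactly what Witt's theorem supplies.
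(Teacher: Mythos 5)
Your proposal is correct, and its skeleton is the same as the paper's: in both arguments $g$ is a finite--rank perturbation of the identity, acting as the identity on the span of all but finitely many elements of $E$ and adjusted on a finite--dimensional basis--spanned block. The difference lies in how the finite block is treated. The paper transports one compatibility basis to the other, setting $g(x_\alpha)=y_\alpha$ on the finite discrepancy set and leaning on Theorems 6.1--6.2 of \cite{DP2004} for the verification that this actually matches the two flags; you instead forget the bases and argue intrinsically: enlarge $U$ to $\widehat U = \Span\widehat E$ so that both chains split along $\widehat U \oplus W$, show the tails agree ($F\cap W = \varphi(F)\cap W$ via the projection mod $\widehat U$), show the induced finite flags in $\widehat U$ have the same type, and then quote transitivity of $GL(\widehat U)$, resp.\ Witt's theorem, on flags of a fixed type. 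Your modular--law step is exactly right and worth recording: $F\cap U = (F\cap\widehat U)\cap U$ and $(F\cap\widehat U)+U = (\varphi(F)\cap\widehat U)+U$ together with $\dim(F\cap U)=\dim(\varphi(F)\cap U)$ force $\dim(F\cap\widehat U)=\dim(\varphi(F)\cap\widehat U)$ --- a point the paper's proof does not address (it even misprints the dimension clause as $\dim(\varphi(F)+U)$). Your worry about the simultaneous splitting is unfounded: since $E\setminus(E\cap X)$ is finite, $X\setminus E$ is finite as well, so adjoining to $\widehat E$ the $E$--supports of $X\setminus E$, of $Y\setminus E$, and of a basis of $U$ splits every member of both chains at once. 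So your route buys a self--contained, member--by--member proof of $g(\cF)=\cL$ (and you also verify the dimension clause in the first implication, which the paper passes over), at the cost of length; the paper buys brevity via the citation. Two small points, which you share with the paper's own level of gloss: for $G = SL(V,E)$ one should normalize $\det g_0 = 1$ by composing with a flag--stabilizing diagonal element of $GL(\widehat U)$, and for $G = SO(V,E,b)$ Witt's theorem only produces $g_0 \in O(\widehat U,b)$, and when the finite flags contain maximal isotropic subspaces of $\widehat U$ the sign of the determinant cannot always be corrected inside the flag stabilizer --- this is precisely the orientation phenomenon of Remark \ref{orientation}, and it is the one place where the quoted statement needs the care taken in \cite{DP2004}.
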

\begin{proof} (Compare with Theorem 6.1 of \cite{DP2004}.)
If $g \in G$ then $V = U + W$ where $g$ is the identity on $W$, 
$g(U) = U$, and $\dim U < \infty$.  If $F_\alpha \in \cF$ then 
$g(F_\alpha) \subset F_\alpha + U$.  In particular $g(\cF)$ is
weakly compatible with $E$.  This proves the first statement.
\smallskip

Let $\cF$ and $\cL$ be $E$--commensurable, and let $U$ be a finite
dimensional subspace of $V$, such that each $F \subset \varphi(F)+U$,
$\varphi(F) \subset F + U$ and $\dim(F\cap U) = \dim(\varphi(F)+U)$.
They are weakly compatible with $E$ so they are compatible with bases
$X$ and $Y$ such that $E\setminus (E\cap X)$ and $E\setminus (E\cap Y)$
are finite.  Now $E \setminus (E\cap (X \cup Y))$ is finite; let $U$
denote its span and let $W$ be the span of its complement in $E$.  Let
$g \in G$ be the identity on $W$, and define $g: U \to U$ by 
$g(x_\alpha) = y_\alpha$ for
$\alpha$ an index of $E \setminus (E\cap (X \cup Y))$.
\end{proof}

\subsection{Flag Manifolds.}\label{ssec2c}
\setcounter{equation}{0}
\smallskip

Let $\cF$ be a generalized flag weakly compatible with $E$.  
If $G = SO(V,E,b)$ or $G = Sp(V,R,b)$, suppose that $\cF$ is isotropic.
In view of Lemma \ref{wk-compat},
\begin{remark}\label{def-flag-manifold}{\em
The {\bf flag manifold} $\cZ_{\cF,E}$ consists of all generalized flags 
in $V$ that are $E$--commensurable to $\cF$.} \hfill $\diamondsuit$
\end{remark}

Lemma \ref{wk-compat} says that $\cZ_{\cF,E}$ is a homogeneous space
for the complex group $G$.  Realize $V = \varinjlim V_n$ according to
an exhaustion $E = \bigcup E_n$ by finite subsets.  Denote
$\cF_n = \cF\cap V_n$\,.  In other words, if 
$\cF = \{F_\alpha\}_{\alpha \in A}\}$ then 
$\cF_n$ is $\{F_\alpha \cap V_n \}_{\alpha \in A}\}$ with repetitions 
allowed.  Now $\cF_n$ is a flag in $V_n$ so we have the flag manifold
$\cZ_{\cF_n,E_n}$\,.  Note that the
$F_\alpha \cap V_n \hookrightarrow F_\alpha \cap V_m$, $m \geqq n$,
define maps $\cZ_{\cF_n,E_n} \to \cZ_{\cF_m,E_m}$ and give us a direct
system $\{\cZ_{\cF_n,E_n}\}$ for which $\cZ_{\cF,E} = \varinjlim
\{\cZ_{\cF_n,E_n}\}$.  Since the finite dimensional flag manifold
$\cZ_{\cF_n,E_n}$ has the natural structure of homogeneous projective
variety under the action of $G_n$, and the 
$\cZ_{\cF_n,E_n} \to \cZ_{\cF_m,E_m}$ are equivariant rational maps and 
equivariant for $G_n \hookrightarrow G_m$, the infinite dimensional flag
manifold $\cZ_{\cF,E}$ is a $G$--homogeneous ind--variety.  We emphasize the
connection with the (finite dimensional) $\cZ_{\cF_n,E_n}$ by viewing
$\cZ_{\cF,E}$ as a complex ind--manifold referring to it
simply as a complex flag manifold.

\subsection{Real Forms of the Complex Groups.} \label{ssec2d}
\setcounter{equation}{0}
Corresponding to the complex classical groups $G$ mentioned above, we 
have their real forms as follows.  Here note that a local isomorphism 
to one of the groups on the following list implies an isomorphism of
Lie algebras, so the local isomorphism is compatible with 
the ind--structure specified as direct limit of finite dimensional Lie groups.
\smallskip

\noindent
\underline{If  $G = SL(\infty;\C)$},  then $G_0$ is locally isomorphic to 
one of
\smallskip

$SL(\infty;\R) = \lim_{n \to \infty} SL(n;\R)$ the real
special linear group,

$SL(\infty;\H) = \lim_{n \to \infty} SL(n;\H)$ the quaternion
special linear group,

$SU(p,\infty) = \lim_{n \to \infty} SU(p,n)$ the complex special
unitary group of finite real rank $p$, and

$SU(\infty,\infty) = \lim_{p,q \to \infty} SU(p,q)$ the complex
special unitary group of infinite real rank.
\pagebreak

\noindent
\underline{If  $G = GL(\infty;\C)$},  then $G_0$ is locally isomorphic to 
one of
\smallskip

$GL(\infty;\R) = \lim_{n \to \infty} GL(n;\R)$ the real
general linear group,

$GL(\infty;\H) = \lim_{n \to \infty} GL(n;\H) = SL(\infty;\H)
\times \R$ the quaternion general linear group,

$U(p,\infty) = \lim_{n \to \infty} U(p,n)$ the complex 
unitary group algebra of finite real rank $p$, and

$U(\infty,\infty) = \lim_{p,q \to \infty} U(p,q)$ the complex
unitary group of infinite real rank.
\smallskip

\noindent
\underline{If  $G = SO(\infty;\C)$},  then $G_0$ is locally isomorphic to 
one of
\smallskip

$SO(p,\infty) = \lim_{n \to \infty} SO(p,n)$ the real orthogonal
group of finite real rank $p$,

$SO(\infty,\infty) = \lim_{p,q \to \infty} SO(p,q)$ the real
orthogonal group of infinite real rank, and

{\it Caveat: when we write $SO($---$)$ we mean the topological identity 
component of $O($---$)$}.

$SO^*(\infty) = \lim_{n \to \infty} (SO^*(2n)
= \{g \in SL(n;\H) \mid g \text{ preserves }
\kappa(x,y) := \sum \bar x^\ell i y^\ell = {}^t\bar x i y\})$.
\smallskip

\noindent
\underline{If  $G = Sp(\infty;\C)$},  then $G_0$ is locally isomorphic to 
one of
\smallskip

$Sp(\infty;\R) = \lim_{n \to \infty} Sp(n;\R)$ the real
symplectic group,

$Sp(p,\infty) = \lim_{n \to \infty} Sp(p,n)$ the quaternion unitary
Lie algebra of finite real rank $p$, and

$Sp(\infty,\infty) = \lim_{p,q \to \infty} Sp(p,q)$ the quaternion
unitary Lie algebra of infinite real rank.
\medskip

As usual we use Roman letters for the Lie groups and the corresponding
lower case fraktur for their Lie algebras.
In order to be precise about the real groups we must be careful about two
notions: nondegeneracy of subspaces, and the role of $V$ and $E$ in
complex conjugation $\tau$ of $\gg$ over $\gg_0$ and $G$ over $G_0$\,.

\section{Basis and Exhaustion.}\label{sec3}
\setcounter{equation}{0}
We run through the real groups of Section \ref{ssec2d}, defining some
particular bases, flags and signatures
relevant to our results on cycle spaces.

\noindent
\subsection{$SU(\infty,q)\,, q \leqq \infty$.}\label{ssec3a}
\setcounter{equation}{0}
In this case $V = \C^{\infty,q}$  with $q \leqq \infty$ 
and we start with an ordered basis 
\begin{equation}\label{su-basis}
\begin{aligned}
E = &\{\dots, e_{-2}, e_{-1}, e_1, e_2, \dots , e_q\} \text{ if } q < \infty,\\
E = &\{\dots, e_{-2}, e_{-1}, e_1, e_2 , \dots \} \text{ if } q = \infty,
\end{aligned}
\end{equation}
where $G_0$ is defined by the hermitian form 
\begin{equation}\label{hform-su}
h(e_i,e_j) = \delta_{i,j} \text{ for } i < 0 \text{ and } 
	h(e_i,e_j) = -\delta_{i,j} \text{ for } i > 0.
\end{equation}
The corresponding exhaustion $V = \bigcup V_n$ realizes $G_0$
as ${\lim}_{k,\ell \to \infty} SU(k,\ell)$ or ${\lim}_{k \to \infty} SU(k,q)$.
\smallskip

$\cF$ is a flag in $V$ compatible with the ordered basis $E$ 
of (\ref{su-basis}).  {\em The partial order $\prec$ for this compatibility is
not necessarily the order of {\rm (\ref{su-basis})}; it is a
property of $\cF$ relative to $E$ rather than a property of the
ordering {\rm (\ref{su-basis})} of $E$.}
To each flag $\cF^{(1)} \in \cZ_{\cF,E}$ we assign the signature sequence
$\{s_k = s_k(\cF^{(1)}) 
	:= (\pos_k(\cF^{(1)}),\neg_k(\cF^{(1)}),\nul_k(\cF^{(1)}))\}$
where $\pos_k$ is the dimension of
the maximal positive definite subspace of $F^{(1)}_k$\,, $\neg_k$ is the
dimension of the maximal negative definite subspace, and $\nul_k$ is the
nullity.  If $\nul_k = 0$ we write $(\pos_k(\cF^{(1)}),\neg_k(\cF^{(1)}))$
for $(\pos_k(\cF^{(1)}),\neg_k(\cF^{(1)}),0)$. 
If the $F_k$ all are finite dimensional, then $\pos_k$, $\neg_k$ and 
$\nul_k$  all are finite.  If $q < \infty$, i.e. if $G_0$ has finite real 
rank $q$, then every $\nul_k \leqq q$.  However, when one or more of the
$F_k$ is infinite dimensional the signature sequence is not always useful.

\subsection{$SO(\infty,q)\,, q \leqq \infty$.}\label{ssec3b}
\setcounter{equation}{0}
Again $V = \C^{\infty,q}$. In terms of an ordered basis $E' = \{e_i'\}$
as in (\ref{su-basis}),
$G_0$ is defined by a symmetric bilinear form $b$ together with a 
hermitian form $h$, as follows:
\begin{equation}\label{so-forms}
\begin{aligned}
&b(e'_i,e'_j) = +\delta_{i,j} = h(e'_i,e'_j) \text{ for } i < 0, \,\,
b(e'_i,e'_j) = -\delta_{i,j} = h(e'_i,e'_j) \text{ for } i > 0, \\
&\text{the other } b(e'_k,e'_\ell) = 0 = h(e'_k,e'_\ell).
\end{aligned}
\end{equation}
To see that (\ref{so-forms}) defines $SO(\infty,q)$, we note that 
$SO(\infty,q)$ consists of all finitary real matrices (relative to the 
basis $E'$) in the $SO(\infty;\C)$ defined by $b$, and also consists of all 
real matrices in the $SU(\infty,q)$ defined by $h$.  Write $B$ and $H$ for the
matrices of $b$ and $h$, so $SO(\infty;\C)$ is given by $gB\cdot\tr g = B$
and $SU(\infty,q)$ is given by $gH\cdot\tr \bar g = H$.  Since $B = H$,
now $g \in SO(\infty;\C) \cap SU(\infty,q)$ implies $g = \bar g$ so
$g \in SO(\infty,q)$, and obviously $g \in SO(\infty,q)$ implies
$g \in SO(\infty;\C) \cap SU(\infty,q)$.  For $k, \ell \leqq \infty$
we have verified
\begin{equation}\label{so-def}
SO(k,\ell) = SO(k+\ell;\C) \cap SU(k,\ell)\,,  SO(k+\ell;\C) 
	\text{ defined by } b \,, SU(k,\ell) \text{ defined by } h.
\end{equation}

A $b$--isotropic flag in $V$ cannot be
compatible with $E'$ because a subspace of $V$ spanned by some of the $e'_i$
neither contains nor is contained in its $b$--orthocomplement.  So we define
\begin{equation}\label{so-basis}
\begin{aligned}
E = &\{\dots, e_{-2}, e_{-1}, e_1, e_2, \dots , e_q\} \text{ if } q < \infty,\\
E = &\{\dots, e_{-2}, e_{-1}, e_1, e_2 , \dots \} \text{ if } q = \infty,
\end{aligned}
\end{equation}
where 
\begin{equation}\label{forms-so}
\begin{aligned}
&h(e_i,e_j) = \delta_{i,j} \text{ for } i < 0,\, 
	h(e_i,e_j) = -\delta_{i,j} \text{ for } i > 0 \text{ and }\\
&b(e_i,e_j) = \delta_{i,j} \text{ for } i < -q,\,
	b(e_i,e_j) = \delta_{0,i+j} \text{ for }  i \geqq -q\,.
\end{aligned}
\end{equation}
The transformation $e'_i \mapsto e_i$ is not finitary when $q = \infty$,
but nonetheless every $g \in G$ is finitary relative to the basis $E$.
$\cF$ is an $E$--commensurable isotropic flag in $V$.
We use $h$ for the signature sequence 
$\{s_k = s_k(\cF^{(1)})
        := (\pos_k(\cF^{(1)}),\neg_k(\cF^{(1)}),\nul_k(\cF^{(1)}))\}$
for a flag $\cF^{(1)} \in \cZ_{\cF,E}$, where $\pos_k$ is the dimension of
the maximal $h$--positive definite subspace of $F^{(1)}_k$\,, $\neg_k$ is the
dimension of the maximal $h$--negative definite subspace, and $\nul_k$ is the
$h$--nullity.  As in Section \ref{ssec3a} above, 
if $\nul_k = 0$ we write $(\pos_k(\cF^{(1)}),\neg_k(\cF^{(1)}))$
for $(\pos_k(\cF^{(1)}),\neg_k(\cF^{(1)}),0)$, and
if the $F_k$ all are finite 
dimensional, then $\pos_k$, $\neg_k$ and
$\nul_k$  all are finite, and if $q < \infty$ then every $\nul_k \leqq q$.
\smallskip

\begin{remark}\label{orientation}{\rm
Orientation can be a consideration for $SO(\infty,q)$.
Following \cite[Theorem 2.8]{DPW2009}, the stabilizer of a $b$--isotropic 
flag $\cF$ determines 
all the subspaces in $\cF$ except when some there is an isotropic subspace
$L \in \cF$ with $\dim L^\perp/L = 2$.  In that case there are two maximal
isotropic subspaces $M_1$ and $M_2$ of $(V,b)$ that contain $L$, and 
there are three flags with the same stabilizer as $\cF$, and of course
$\cF$ is one of them.  We list them with {\em ad hoc} designations.  
(i) (undecided orientation) $\{F^{(1)} \in \cF \mid F^{(1)} \subset L$ or 
$L^\perp \subset F^{(1)}\}$ and neither of the $M_i$ is contained in $\cF$, 
(ii) (positive orientation) $\{F^{(1)} \in \cF \mid F^{(1)} \subset L$ or 
$L^\perp \subset F^{(1)}\} \cup \{M_1\}$, i.e. $M_1 \in \cF$, and
(iii) (negative orientation) $\{F^{(1)} \in \cF \mid F^{(1)} \subset L$ or 
$L^\perp \subset F^{(1)}\} \cup \{M_2\}$, i.e. $M_2 \in \cF$.  Signature does 
not distinguish these three flags, for example (ii) and (iii) have the 
same signatures for all $q$, and sometimes all three have the same signature
with $q = \infty$.}
\end{remark}

\subsection{$Sp(\infty,q)\,,\, q \leqq \infty$.}\label{ssec3c}
\setcounter{equation}{0}
Here $V = \C^{\infty,2q}$ and we use the basis (\ref{su-basis}) with $q$
replaced by $2q$.  Then $G_0$ is defined by both an antisymmetric 
bilinear form $b$ and an hermitian form $h$.
\begin{equation}\label{sp-forms}
\begin{aligned}
&b(e_{2i-1},e_{2i}) = -1,\, b(e_{2i},e_{2i-1}) = +1, 
	\text{ for } i > 0, \\
&b(e_{2i+1},e_{2i}) = +1,\, 
	b(e_{2i},e_{2i+1}) = -1 \text{ for } i < 0,
	\text{ all other } b(e_a,e_b) = 0; \\
&h(e_i,e_j) = \delta_{i,j} \text{ for } i < 0 \text{ and }
	h(e_i,e_j) = -\delta_{i,j} \text{ for } i > 0,
	\text{ all other } h(e_a,e_b) = 0.
\end{aligned}
\end{equation}
To see this we need the analog of (\ref{so-def}), and for that we need to
find the quaternion algebra that realizes $\C^{2p,2r}$ as $\H^{p,r}$.

\begin{lemma}\label{sp-def}
$Sp(p,r) = Sp(p+r;\C) \cap SU(2p,2r)$ where $SO(p+r;\C)$ is defined by $b$
as in {\rm (\ref{sp-forms})} and $SU(2p,2r)$ is defined by $h$ as in 
{\rm (\ref{sp-forms})}.
\end{lemma}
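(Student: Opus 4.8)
The plan is to follow the derivation of (\ref{so-def}), but since $b$ is now antisymmetric while $h$ is hermitian one cannot simply compare the two matrices; the missing ingredient is a quaternionic structure fusing $b$ and $h$ into a single quaternionic hermitian form. Regard $V=\C^{2p,2r}$, of complex dimension $2(p+r)$; all transformations in sight are finitary, so determinants are defined and every nondegeneracy argument below may be read off on a finite--dimensional $b$-- and $h$--nonsingular block. First I would introduce a conjugate--linear operator $J$ on $V$ pairing up the symplectic partners of (\ref{sp-forms}), namely
$$
J e_{2i-1}=e_{2i},\quad J e_{2i}=-e_{2i-1}\ (i>0);\qquad
J e_{2i+1}=e_{2i},\quad J e_{2i}=-e_{2i+1}\ (i<0),
$$
extended conjugate--linearly to $V$. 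One checks at once that $J^2=-I$, so letting the quaternion $j$ act as $J$ makes $V$ a right $\H$--module $\H^{p,r}$, and by construction $g\in GL(V)$ is $\H$--linear exactly when $gJ=Jg$.

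The computational heart of the argument is the compatibility identity
\begin{equation}\label{sp-compat}
b(x,y)=h(Jx,y)\qquad\text{for all } x,y\in V.
\end{equation}
I would verify (\ref{sp-compat}) on the basis using (\ref{sp-forms}): for example $h(Je_1,e_2)=h(e_2,e_2)=-1=b(e_1,e_2)$ on a positive block and $h(Je_{-1},e_{-2})=h(e_{-2},e_{-2})=+1=b(e_{-1},e_{-2})$ on a negative block, all remaining cases being of these two shapes. The right side of (\ref{sp-compat}) is genuinely bilinear because $J$ is antilinear and $h$ is conjugate--linear in its first slot, so (\ref{sp-compat}) is the correct packaging; together $h$ and $b$ reassemble into the quaternionic hermitian form $\phi=h+b\,j$ of signature $(p,r)$, whose group of $\H$--linear isometries is by definition $Sp(p,r)$.

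Granting (\ref{sp-compat}) the two inclusions are almost formal. For $\supseteq$ I take $g\in Sp(p,r)$: it is $\H$--linear and preserves $\phi$, hence preserves both components $h$ and $b$, so $g\in U(2p,2r)\cap Sp(p+r;\C)$; as symplectic transformations have determinant $1$ this gives $g\in SU(2p,2r)\cap Sp(p+r;\C)$. For $\subseteq$ I take $g\in Sp(p+r;\C)\cap SU(2p,2r)$, so $g$ preserves both $b$ and $h$. Feeding this into (\ref{sp-compat}) and rewriting $h(w,gv)=h(g^{-1}w,v)$ yields
$$
h(Jx,y)=b(x,y)=b(gx,gy)=h(Jgx,gy)=h(g^{-1}Jgx,y)
$$
for all $x,y$, whence nondegeneracy of $h$ forces $g^{-1}Jg=J$, i.e. $gJ=Jg$, so $g$ is $\H$--linear. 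Being $\H$--linear and preserving both $h$ and $b$, it preserves $\phi$, so $g\in Sp(p,r)$. This proves $Sp(p,r)=Sp(p+r;\C)\cap SU(2p,2r)$, and since each $g\in G$ is finitary the nondegeneracy step and the determinant remark reduce to finite blocks, so the identity holds for all $p,r\leqq\infty$.

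The step demanding genuine care --- the main obstacle --- is the explicit construction of $J$ together with the verification of (\ref{sp-compat}): the signs in (\ref{sp-forms}) are opposite on the positive-- and negative--index blocks, so $J$ must be defined block by block with matching signs in order that $J^2=-I$ and (\ref{sp-compat}) hold simultaneously under a single uniform identity. Once the quaternionic structure is pinned down correctly, the remainder is formal nondegeneracy together with the containment $Sp(p+r;\C)\subset SL(2(p+r);\C)$ that turns $U(2p,2r)$ into $SU(2p,2r)$.
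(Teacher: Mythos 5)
Your proof is correct, and its first half is essentially the paper's: the paper also builds a quaternionic structure out of the two forms, setting $\cJ(v)=\sqrt{-1}\,B H\bar v$ where $B,H$ are the Gram matrices of $b,h$ (this differs from your $J$ only by a unit complex factor), and shows that any $g\in Sp(p+r;\C)\cap SU(2p,2r)$ commutes with $\cJ$, hence is $\H$--linear, by a direct matrix computation with $\tr g = B^{-1}g^{-1}B$ and $\tr\bar g = H^{-1}g^{-1}H$. Your derivation of $gJ=Jg$ from the intertwining identity $b(x,y)=h(Jx,y)$ plus nondegeneracy of $h$ is a cleaner, coordinate--free version of the same computation. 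Where you genuinely diverge is the reverse inclusion $Sp(p,r)\subseteq Sp(p+r;\C)\cap SU(2p,2r)$: the paper argues Lie--theoretically, observing that $\sigma(g)=\cJ g\cJ^{-1}$ is an involutive automorphism of the (simply connected) real group underlying $Sp(p+r;\C)$, that fixed point sets of involutions of simply connected groups are connected, and that $Sp(p,r)$ is maximal among connected subgroups; you instead argue directly that $g\in Sp(p,r)$ preserves both components $h$ and $b$ of $\phi=h+b\,j$ and has determinant $1$ because it is symplectic. Your route is more elementary and self--contained (no appeal to simple connectivity or maximality), and it transparently handles the $SU$ versus $U$ point; what it costs is that the identification of $Sp(p,r)$ with the $\H$--linear isometry group of $\phi$ carries the burden of proof, and there you gloss a genuine convention check: with $h$ conjugate--linear in the first slot and $b$ antisymmetric, one finds $h(x,Jy)=-\overline{b(x,y)}$ rather than $-b(x,y)$, so verifying the quaternion--hermitian axioms for $\phi$ (which side $j$ sits on, linearity in which slot) requires a few lines of care. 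None of this affects the group--theoretic conclusion, since both inclusions only use that preserving $\phi$ is equivalent to preserving $h$ and $b$ jointly and that $\H$--linearity is equivalent to commuting with $J$, but you should state and check the axioms explicitly rather than assert them. Your closing remarks on finitary reduction to finite blocks correctly reflect how the paper then passes to the limit in $p$ and $r$.
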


\begin{proof}  We work in matrices relative to the portion
$E = \{e_{-2p}, \dots, e_{2r}\}$ of (\ref{su-basis}).  Then $b$ has matrix
${B} = \diag\{J, \dots , J\}$ where $J = \left ( \begin{smallmatrix}
0 & -1 \\ 1 & 0 \end{smallmatrix} \right )$ and $h$ has matrix
${H} = \left ( \begin{smallmatrix} I_{2p} & 0 \\ 0 & -I_{2r} 
\end{smallmatrix} \right )$.  So $Sp(p+r;\C)$ is given by
$g{B}{\tr g} = {B}$ and $SU(2p,2r)$ is given by 
$g{H}{\tr\bar{g}} = {H}$.  Define $\R$--linear transformations
of $V$ by
$$
\cI(v) = \sqrt{-1}\, v \text{ and } \cJ(v) = \sqrt{-1} {B} {H} \bar v
\text{ for } v \in V.
$$
Compute
$$
\cI^2 = -I, \,\,\cJ^2 = -I \text{ and } \cI \cJ + \cJ \cI = 0
$$
so $\cI$ and $\cJ$ generate a quaternion algebra; call it $\H$.  If
$g \in Sp(p+r;\C) \cap SU(2p,2r)$, so $\tr g = {B}^{-1}g^{-1}{B}$ and 
$\tr \bar g = {H}^{-1}g^{-1}{H}$, then ${B}^{-1} = -{B}$,
${H}^{-1} = {H}$, and we compute
$$
\begin{aligned}
\cJ g \cJ^{-1} v &= (\sqrt{-1} {B} {H})(\bar g)
			(\overline{-\sqrt{-1} {H} {B}\bar v}) 
			= -{B} {H} \bar g {H} {B} v \\
	&= -{B} {H}\cdot {H} {\tr g^{-1}} {H}\cdot 
			{H} {B} v = {B} {\tr g^{-1}} {B} v
			={B}\cdot {B} g {B}^{-1}\cdot {B} v
			= gv
\end{aligned}
$$ 
for $v \in V$.  Thus $\cJ$ commutes with every 
$g \in Sp(p+r;\C) \cap SU(2p,2r)$, in other words every 
$g \in Sp(p+r;\C) \cap SU(2p,2r)$ is $\H$--linear. 
That shows $Sp(p+r;\C) \cap SU(2p,2r) \subset Sp(p,r)$.  
\smallskip

On the other hand, $\sigma: g \mapsto \cJ g \cJ^{-1}$ is an involutive
automorphism on the underlying real structure of $Sp(p+r;\C)$.  The latter
is simply connected, so its fixed point set is connected.  But $\sigma$
fixes every element of $Sp(p,r)$, which is maximal among the connected
subgroups of $Sp(p+r;\C)$.  So now $Sp(p,r) \subset Sp(p+r;\C) \cap SU(2p,2r)$.
That completes the proof.
\end{proof}

Now take the limit on $p$, or on $p$ and $r$, to see how
$G_0$ is defined by the two forms $b$ and $h$ of (\ref{sp-forms}).
Let $\cF$ be a $b$--isotropic flag in $V$ compatible with the basis $E$ of
(\ref{su-basis}).  For that, note that $\Span\{e_i \mid i \text{ even}\}$
and $\Span\{e_i \mid i \text{ odd}\}$ are $b$--isotropic subspaces.
As in the previous cases one can discuss signature for flags 
$\cF^{(1)} \in \cZ_{\cF,E}$\,.

\subsection{$SO^*(\infty)$.}\label{ssec3d}
\setcounter{equation}{0}
This case is similar to the case of $SO(\infty,\infty)$, except that we use a
different bilinear form $b$.  The basis is
\begin{equation}\label{so*-basis}
E = \{\dots e_{-3}, e_{-2}, e_{-1}, e_1, e_2, e_3, \dots\} =
	\bigcup E_n \text{ where } E_n = \{e_{-n}, \dots, e_n\}.
\end{equation}
$G_0$ is defined by the symmetric bilinear form $b$ and the hermitian form $h$:
\begin{equation}\label{so*-forms}
b(e_i,e_j) = \delta_{i+j,0},\,\,
h(e_i,e_j) = \delta_{i,j} \text{ for } i < 0 \text{ and }
h(e_i,e_j) = -\delta_{i,j} \text{ for } i > 0.
\end{equation}
Thus $V = \bigcup V_n$ where $V_n = \Span\{E_n\}$ and
$G_0 = SO^*(\infty) = \varinjlim SO^*(2n)$ where $SO^*(2n)$ is the subgroup
of $SL(2n;\C)$ defined by the forms $b$ and $h$ of (\ref{so*-forms}).  To
check this, note that that subgroup of $SL(2n;\C)$ itself has maximal compact
subgroup isomorphic to $U(n)$.
\smallskip

\subsection{$Sp(\infty;\mathbb{R})$.}\label{ssec3e}
\setcounter{equation}{0}
This case is similar to the case of $SO^*(\infty)$, except that the bilinear
form $b$ is antisymmetric.  We use the same basis (\ref{so*-basis}), 
with bilinear form $b$ and hermitian form $h$:
\begin{equation}\label{spr-forms}
\begin{aligned}
&b(e_i,e_j) = \delta_{i+j,0} \text{ for } i < 0 \text{ and }
b(e_i,e_j) = -\delta_{i+j,0} \text{ for } i > 0;\\
&h(e_i,e_j) = \delta_{i,j} \text{ for } i < 0 \text{ and }
h(e_i,e_j) = -\delta_{i,j} \text{ for } i > 0.
\end{aligned}
\end{equation}
Here $G = Sp(\infty;\C)$ is defined by $b$.  One can view $G_0$ as the real
(relative to $\Span_\R(E)$) elements of $G$, but for our purposes it is
better to view it as $G \cap SU(\infty,\infty)$ where $U(\infty,\infty)$
is defined by the hermitian form $h$.  For that, it suffices to check
that $Sp(n;\R) = Sp(n;\C) \cap U(n,n)$, and to check that it suffices
to note that $Sp(n;\C) \cap SU(n,n)$ contains a $U(n)$ in the form
$\left ( \begin{smallmatrix} A & 0 \\ 0 & \tr \bar A^{-1} \end{smallmatrix}
\right )$.
\smallskip

As in the previous cases one can discuss signature for flags 
$\cF^{(1)} \in \cZ_{\cF,E}$\,.

\subsection{$SL(\infty;\mathbb{R})$ and $SL(\infty;\mathbb{H})$.}\label{ssec3f}
\setcounter{equation}{0}
Fix a real form $V_0$ of $V$ and an ordered basis 
$E = \{e_1,e_2,e_3, \dots \}$ of $V_0$.  Then
$SL(\infty;\R)$ is defined by complex conjugation
$\tau: v \mapsto \overline{v}$ of $V$ over a real form $V_0$, while
$SL(\infty;\H)$ is defined by a conjugate linear map $\tau: v \mapsto
\left ( \begin{smallmatrix}  0 & I \\ -I & 0 \end{smallmatrix}\right )
\overline{v}$ on $V$.  In terms of $E$,
\begin{quote}
Case $\F = \R$: each $\tau e_a = e_a$, so $E$ is an $\R$--basis of a real
        form $V_0$ of $V$

Case $\F = \H$: $\tau e_{2a-1} = -e_{2a}$ and $\tau e_{2a} = e_{2a-1}$,
        so each $\{e_{2a-1}, ie_{2a-1}, e_{2a}, ie_{2a}\}$ is an
        $\R$--basis of an $\H$--subspace of $V$
\end{quote}
In the finite dimensional case the signature 
for a generalized flag $\cF^{(1)}$ is $\{s_{i,j} = s_{i,j}(\cF^{(1)})\}$
where $s_{i,j}(\cF^{(1)})$ is the dimension of the maximal complex subspace
of $F^{(1)}_i \cap \tau F^{(1)}_j$ (\cite{HS2001} and \cite{HW2002}).  In the
infinite dimensional cases we will have to be more precise 
\cite[\S 5]{IPW2015}.

\subsection{Nondegeneracy and Open Orbits.}\label{ssec3g}
\setcounter{equation}{0}
Fix a basis $E$ of $V$ as in Sections \ref{ssec3a} through \ref{ssec3f},
and a flag $\cF$ in $V$ that is compatible with $E$.  
Except in the cases of $SL(\infty;\mathbb{R})$ and $SL(\infty;\mathbb{H})$,
we use signatures of generalized flags to distinguish real group orbits
on $\cZ_{\cF,E}$\,, as follows. 

\begin{definition}\label{su-nondegen}{\rm
Let $G_0$ be defined by a nondegenerate bilinear form $b$ or an hermitian
form $h$ or both.  Then we say that a flag $\cF^{(1)} \in \cZ_{\cF,E}$ 
is {\bf nondegenerate} if (i) for $SU(\infty,q)$, $SO(\infty,q)$ or
$Sp(\infty,q)$ each $F^{(1)}_\alpha$ is $h$-- (or $b$--) nondegenerate; and
(ii) for $Sp(n;\R)$ or $SO^*(\infty)$ each $F^{(1)}_\alpha$ is
$h$--nondegenerate.}
\hfill $\diamondsuit$
\end{definition}

\begin{theorem}\label{open-orbit-su}
Let $G_0$ be $SU(\infty,q)$, $SO(\infty,q)$, $Sp(\infty,q)$, $SO^*(\infty)$
or $Sp(\infty;\R)$ and consider a flag $\cF^{(1)} \in \cZ_{\cF,E}$.  Then 
$G_0(\cF^{(1)})$ is open in $\cZ_{\cF,e}$ if and only if $\cF^{(1)}$ is
nondegenerate.
\end{theorem}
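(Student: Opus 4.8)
The plan is to reduce openness to the infinitesimal condition $\gg = \gg_0 + \gq^{(1)}$, where $\gq^{(1)} = \{X \in \gg \mid X F^{(1)}_\alpha \subseteq F^{(1)}_\alpha \text{ for all }\alpha\}$ is the isotropy subalgebra, and then to recast that condition as a transversality between $\cF^{(1)}$ and its orthogonal flag. First I would record the orbit--map criterion: $g \mapsto g\cF^{(1)}$ has differential $\gg_0 \to \gg/\gq^{(1)} = T_{\cF^{(1)}}\cZ_{\cF,E}$ at the identity, so $G_0(\cF^{(1)})$ is open exactly when this is onto, i.e. $\gg = \gg_0 + \gq^{(1)}$. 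Since every element of $\gg$ is finitary and $\cZ_{\cF,E} = \varinjlim \cZ_{\cF_n,E_n}$, this is detected on finite pieces: $\gg = \gg_0 + \gq^{(1)}$ iff $\gg_N = \gg_{0,N} + \gq^{(1)}_N$ for a cofinal set of $N$, which is the classical open--orbit criterion on $\cZ_{\cF_N,E_N}$ of \cite{W1969}. Next, writing $\tau$ for the conjugation of $\gg$ over $\gg_0$ and $*$ for the adjoint with respect to the defining form ($h$, or $b$), one has $\gg_0 = \{X \mid X^* = -X\}$ and $\tau X = -X^*$. A short direct argument, not using any dimension count and hence valid in the ind--setting, shows $\gg_0 + \gq^{(1)} = \gg \iff \gq^{(1)} + \tau\gq^{(1)} = \gg$; and since the adjoint of a flag stabilizer stabilizes the orthogonal flag, $\tau\gq^{(1)}$ is the stabilizer of $\cF^{(1)\perp} = \{(F^{(1)}_\alpha)^\perp\}$. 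Thus openness is equivalent to the transversality $\gq^{(1)} + \tau\gq^{(1)} = \gg$ of the stabilizers of $\cF^{(1)}$ and $\cF^{(1)\perp}$.

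For the implication that nondegeneracy forces openness I would argue directly. If every $F^{(1)}_\alpha$ is nondegenerate, the successive orthogonal complements give an orthogonal decomposition $V = \bigoplus_\gamma W_\gamma$ into nondegenerate, mutually orthogonal pieces adapted to the flag, in which $\gq^{(1)}$ consists of the transformations respecting the filtration. Given any finitary $Z \in \gg$, decompose it with respect to the $W_\gamma$: the part preserving $\cF^{(1)}$ lies in $\gq^{(1)}$, and for the remaining part $N$ one has $N^* \in \gq^{(1)}$ (adjoints reverse the grading, which the $h$--orthogonality of the $W_\gamma$ respects), so $N = (N - N^*) + N^*$ with $N - N^* \in \gg_0$ and $N^* \in \gq^{(1)}$. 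Hence $Z \in \gg_0 + \gq^{(1)}$ and the orbit is open. Only finitely many blocks occur because $Z$ is finitary, so this is a genuinely finite computation.

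For the converse I prove the contrapositive. Suppose some $F^{(1)}_\alpha$ has nonzero radical $R_\alpha = F^{(1)}_\alpha \cap (F^{(1)}_\alpha)^\perp$. Consider the component $\gg \to \Hom(F^{(1)}_\alpha, V/F^{(1)}_\alpha)$ of the tangent map and restrict it to $R_\alpha$. Elements of $\gq^{(1)}$ preserve $F^{(1)}_\alpha \supseteq R_\alpha$ and so kill $R_\alpha$ in $V/F^{(1)}_\alpha$, while elements of $\tau\gq^{(1)}$ preserve $(F^{(1)}_\alpha)^\perp \supseteq R_\alpha$ and hence carry $R_\alpha$ into $(F^{(1)}_\alpha + (F^{(1)}_\alpha)^\perp)/F^{(1)}_\alpha = R_\alpha^\perp/F^{(1)}_\alpha$, a proper subspace of $V/F^{(1)}_\alpha$ since $R_\alpha \ne 0$ and the form is nondegenerate. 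So the combined image of $R_\alpha$ misses a direction of $\Hom(R_\alpha, V/F^{(1)}_\alpha)$, forcing $\gq^{(1)} + \tau\gq^{(1)} \subsetneq \gg$ and the orbit to be non--open.

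The step I expect to require the most care is the first: justifying the infinitesimal openness criterion inside the ind--variety and reducing it correctly to the finite pieces. One must verify that surjectivity of $\gg_0 \to \gg/\gq^{(1)}$ really yields an open orbit in the direct--limit topology, equivalently that $G_0(\cF^{(1)}) \cap \cZ_{\cF_N,E_N}$ is open for every $N$, and one must accommodate the fact that for $q = \infty$ the passage $e'_i \mapsto e_i$ relating the form--adapted basis to $E$ is not finitary, as well as the orientation ambiguity of Remark~\ref{orientation}, under which $\gq^{(1)}$ does not quite determine $\cF^{(1)}$. Routing the whole argument through $\gq^{(1)}$ rather than through $\cF^{(1)}$, and through cofinal exhaustions adapted so that each $F^{(1)}_\alpha \cap V_N$ remains nondegenerate, should neutralize these points; but checking that such an adapted cofinal exhaustion exists, using the $E$--commensurability of $\cF^{(1)}$ with the compatible flag $\cF$, is the technical heart of the matter.
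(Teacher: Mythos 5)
Your Lie--algebra machinery is largely sound --- the equivalence $\gg_0 + \gq^{(1)} = \gg \Leftrightarrow \gq^{(1)} + \tau\gq^{(1)} = \gg$ is a correct dimension-free argument, and your block decomposition showing that a nondegenerate flag forces $\gq^{(1)} + \tau\gq^{(1)} = \gg$ is a legitimate finitary computation --- but the whole proposal rests on an equivalence you assert and never establish: that $G_0(\cF^{(1)})$ is open in the direct--limit topology \emph{if and only if} $\gg_0 \to \gg/\gq^{(1)}$ is surjective. Your own care-paragraph addresses only the sufficiency direction (surjectivity implies openness, via levelwise reduction and adapted exhaustions), yet your entire converse --- the contrapositive argument with the radical $R_\alpha$ --- is carried by the \emph{necessity} direction: open implies surjective. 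That direction is not formal in the ind--setting. If the orbit is open, then $G_0(\cF^{(1)}) \cap \cZ_{\cF_N,E_N}$ is an open $G_{N,0}$--invariant set containing $\cF^{(1)}\cap V_N$, but this does not make the single orbit $G_{N,0}(\cF^{(1)}\cap V_N)$ open in $\cZ_{\cF_N,E_N}$, so you cannot transfer non-surjectivity at level $N$ into non-openness at the limit by levelwise algebra alone. To certify that perturbed flags near $\cF^{(1)}$ actually leave the \emph{full} $G_0$--orbit (not just the level--$N$ orbit) one needs a $G_0$--invariant, namely the signature sequence: a flag with a degenerate member has, arbitrarily close in some $\cZ_{\cF_N,E_N}$, flags of different signature, and signature is invariant under all of $G_0$. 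That invariants argument is precisely the paper's proof, which dispenses with the infinitesimal criterion entirely: openness in the direct--limit topology is characterized by stability under small perturbations of finitely many flag members, and the finite--dimensional classification of \cite{W1969}, \cite{FHW2005} (as in the first part of Proposition 5.1 of \cite{IPW2015}) then gives the result in two lines. Once you invoke the invariant to repair your necessity direction, the Lie--algebra detour in your converse becomes redundant.

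Two smaller points. First, in the contrapositive you hedge on ``the defining form ($h$, or $b$)''; for $SO(\infty,q)$, $Sp(\infty,q)$, $SO^*(\infty)$ and $Sp(\infty;\R)$ the flag members are $b$--isotropic, so the radical must be taken with respect to $h$, i.e. $R_\alpha = F^{(1)}_\alpha \cap (F^{(1)}_\alpha)^{\perp_h}$, and all perpendiculars in that argument are $h$--perpendiculars. Second, your claim that the combined image ``misses a direction'' of $\Hom(R_\alpha, V/F^{(1)}_\alpha)$ tacitly requires exhibiting some $Z \in \gg$ with $ZR_\alpha \not\subseteq R_\alpha^{\perp_h}$; this is trivial for $\gg = \gsl$, but for the $b$--skew cases the obvious rank--one elements $v \mapsto b(v,r)r$ kill $r$, and one must instead use symmetrized rank--two elements $v \mapsto b(v,u)w + b(v,w)u$ with $u,w$ chosen via nondegeneracy of both forms --- a small construction, but not the throwaway ``the form is nondegenerate'' your text suggests. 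With the necessity gap repaired by the signature invariant and these details supplied, your forward direction would stand as a genuinely more explicit, self-contained alternative to the paper's citation of the finite--dimensional theory.
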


\begin{proof}
An orbit $G_0(\cF^{(1)})$ in $\cZ_{\cF,e}$ is open just when one stays inside
the orbit under any sufficiently small perturbation of a finite number of
the $F^{(1)}$ in $\cF^{(1)}$.  Using the direct limit topology on $\cZ_{\cF,e}$
and the finite dimensional analog (\cite{W1969}, \cite{FHW2005}), the 
assertion follows.  This is the same argument as that of the first part
of \cite[Proposition 5.1]{IPW2015}.
\end{proof}

\begin{corollary} \label{inf-many-open}
There are open $G_0$--orbits on $\cZ_{\cF,E}$ if and only if
$\cZ_{\cF,E}$ contains a nondegenerate flag.  In particular, if
$G_0$ is $SU(\infty,q)$, $SO(\infty,q)$ or $Sp(\infty,q)$ with
$q < \infty$ then there are open $G_0$--orbits on $\cZ_{\cF,E}$.
\end{corollary}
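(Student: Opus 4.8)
The plan is to obtain both assertions directly from Theorem \ref{open-orbit-su}. For the equivalence there is nothing to do beyond unwinding definitions: an open $G_0$--orbit on $\cZ_{\cF,E}$ is by definition an open orbit $G_0(\cF^{(1)})$ for some $\cF^{(1)}\in\cZ_{\cF,E}$, and Theorem \ref{open-orbit-su} says this occurs exactly when that $\cF^{(1)}$ is nondegenerate. So the first sentence is a restatement of the theorem, and the real task is the ``in particular'' clause: to produce a nondegenerate flag in $\cZ_{\cF,E}$ whenever $q<\infty$.

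My first route would be to check that the reference flag $\cF$ is already nondegenerate. Since $\cF$ is compatible with $E$, each of its members is the span of a subset of $E$ by Definition \ref{compatible}; and in each of the cases $SU(\infty,q)$, $SO(\infty,q)$, $Sp(\infty,q)$ the Hermitian form $h$ is diagonal in the basis $E$ by (\ref{hform-su}), (\ref{forms-so}), (\ref{sp-forms}). The restriction of $h$ to a coordinate subspace is then diagonal with nonzero diagonal entries, so its radical is trivial and the subspace is $h$--nondegenerate. Hence every member of $\cF$ is $h$--nondegenerate, $\cF$ itself is a nondegenerate flag, and Theorem \ref{open-orbit-su} makes $G_0(\cF)$ open.

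To see why $q<\infty$ is the right hypothesis (the diagonal argument above does not visibly use it), I would complement this with the perturbation picture, which also shows the conclusion is stable. When $q<\infty$ the negative index of $h$ is finite, so any totally $h$--isotropic subspace injects into the $q$--dimensional $h$--negative span; thus every subspace of $V$ has $h$--nullity at most $q$, and in particular each $\nul_k<\infty$. Starting from an arbitrary $\cF^{(1)}\in\cZ_{\cF,E}$, each member then carries only a finite--dimensional radical, so all the degeneracies can be removed at once by a single finite--dimensional adjustment inside $V$; the adjusted flag is $E$--commensurable to $\cF^{(1)}$, hence again lies in $\cZ_{\cF,E}$, and is nondegenerate. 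The main obstacle here is purely organizational: the finitely many corrections must be made compatibly along the chain, preserving the inclusions $F^{(1)}_\alpha\subset F^{(1)}_{\alpha'}$ and, in the orthogonal and symplectic cases, the $b$--isotropy of the members. Finiteness of $q$ is exactly what keeps this correction finite--dimensional and hence commensurability--preserving; for $q=\infty$ the nullity may be infinite and no such finite correction need exist, which is the delicacy that the restriction $q<\infty$ is flagging.
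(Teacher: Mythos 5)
Your first route is precisely the argument the paper leaves implicit: Corollary \ref{inf-many-open} is printed without proof because its first sentence is a rewording of Theorem \ref{open-orbit-su}, and under the standing hypotheses of Section \ref{ssec3g} the reference flag $\cF$ is itself nondegenerate. Your verification of that is sound: compatibility with $E$ forces every member of $\cF$ to be the span of a subset of $E$; the form $h$ is diagonal with entries $\pm 1$ on $E$ in each of the three cases; and since every vector of $V$ is a finite linear combination of basis vectors, the radical of $h$ restricted to such a coordinate subspace is zero even when the subspace is infinite dimensional. You also correctly use $h$--nondegeneracy (not $b$--nondegeneracy) as the operative condition of Definition \ref{su-nondegen} in the orthogonal and symplectic cases, where the members of a $b$--isotropic flag are of course totally $b$--degenerate. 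So Route A alone proves the statement, and it matches the intended one--line justification.

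Two remarks on the rest. Your Route B, offered as an explanation of the hypothesis $q<\infty$, is not a proof as written: the bound $\nul_k\leqq q$ is fine (the radical is totally isotropic, hence injects under $\pi_-$ into the $q$--dimensional negative span), but the passage from ``each member has finite radical'' to ``a single finite--dimensional adjustment removes all degeneracies compatibly along the chain'' is exactly the nontrivial point for a generalized flag with infinitely many members, and you acknowledge rather than execute it; since Route A already suffices, this costs you nothing here, but it could not stand alone. Second, your observation that the diagonal argument never uses $q<\infty$ is accurate and is the right diagnosis: with $\cF$ compatible with the adapted basis $E$ of Section \ref{sec3}, the flag manifold $\cZ_{\cF,E}$ contains the nondegenerate flag $\cF$ for every $q\leqq\infty$, so within this paper's setup the ``in particular'' clause is a highlighted special case rather than a sharp dichotomy. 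The finite--rank hypothesis earns its keep in the broader setting of \cite{IPW2015}, where the flag need only be compatible with some basis unrelated to $h$, members can then have infinite nullity when $q=\infty$, and open orbits can genuinely fail to exist; indeed, for flags in $\cZ_{\cF,E}$ as defined here, weak compatibility already confines all degeneracy to a finite--dimensional discrepancy region regardless of $q$, so even your Route B would not truly need $q<\infty$ in this setting.
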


The matter is subtler for the special and general linear groups, where
we don't have $b$-- or $h$--nondegeneracy for subspaces of $V$, and where
if $\dim V = \infty$ then the $\dim (F^{(1)}_i\cap\tau F^{(1)}_j)$ do not suffice.
Instead we use \cite[Definition 5.1]{IPW2015} as follows.
\smallskip

\begin{definition}\label{sl-nondegen} {\em
Let $G_0$ be $SL(\infty;\F)$, $\F = \R$ or $\H$.  Then
$G_0(\cF^{(1)}) \subset \cZ_{\cF,E}$ is {\bf nondegenerate} if
$F^{(1)}_i\cap\tau F^{(1)}_j$ fails to properly contain 
$F^{(2)}_i\cap\tau F^{(2)}_j$\,,
whenever $\cF^{(2)} \in \cZ_{\cF,E}$ and 
$F^{(1)}_i, F^{(1)}_j \in \cF^{(1)}$.}
\phantom{XXXXX}\hfill$\diamondsuit$
\end{definition}

The first consequence of this definition is

\begin{theorem}\label{sl-extremes}{\rm (\cite[Proposition 5.3]{IPW2015})}
Let $G_0$ be $SL(\infty;\R)$ or $SL(\infty;\H)$, and consider a flag 
$\cF^{(1)} \in \cZ_{\cF,E}$.  Then the orbit $G_0(\cF^{(1)})$ is open
in $\cZ_{\cF,E}$ if and only if $\cF^{(1)}$ is nondegenerate.  In
particular, if each
$F_n^{(1)} \cap \tau F_n^{(1)} = 0$ then $G_0(\cF^{(1)})$ is open
in $\cZ_{\cF,E}$.
\end{theorem}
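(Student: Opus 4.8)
The plan is to follow the strategy already used for Theorem \ref{open-orbit-su}: reduce the question to the finite dimensional flag manifolds $\cZ_{\cF_n,E_n}$ that exhaust $\cZ_{\cF,E}$, and invoke the finite dimensional classification of $SL(n;\R)$-- and $SL(n;\H)$--orbits from \cite{HS2001} and \cite{HW2002}. The guiding observation is that openness is both finitary and local: in the direct limit topology a flag $\cF^{(2)}$ lies near $\cF^{(1)}$ precisely when it is obtained by perturbing finitely many of the subspaces $F^{(1)}_\alpha$ inside a single finite dimensional $V_n$, and every $g \in G_0$ is the identity off a finite dimensional subspace. Hence $G_0(\cF^{(1)})$ is open if and only if, for every finite subcollection of terms and every $\tau$--stable $V_n$ large enough to carry the relevant data, the corresponding finite dimensional orbit is open.

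First I would fix the finite dimensional model. Choose the exhaustion $V = \varinjlim V_n$ so that each $V_n$ is $\tau$--stable; this is possible because $\tau$ permutes the basis vectors in both the $\R$ and $\H$ cases, as recorded in Section \ref{ssec3f}. Let $G_{0,n}$ be the real form of $SL(V_n)$ determined by $\tau$, so that $G_0 = \varinjlim G_{0,n}$. For a fixed finite set of indices the subspaces $F^{(1)}_i \cap \tau F^{(1)}_j$ stabilize inside some $V_n$, so the invariants $\dim(F^{(1)}_i \cap \tau F^{(1)}_j)$ can all be computed there. The finite dimensional theory says that the $G_{0,n}$--orbit of a flag in $\cZ_{\cF_n,E_n}$ is determined by exactly these incidence dimensions (the signature $\{s_{i,j}\}$ of Section \ref{ssec3f}), and that the orbit is open if and only if those dimensions are as small as possible, i.e. the flag is in general position relative to $\tau$.

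Next I would tie this to Definition \ref{sl-nondegen}. Each $\dim(F_i \cap \tau F_j)$ is upper semicontinuous in the flag, so a small perturbation can only preserve or decrease it. Thus $G_0(\cF^{(1)})$ is open exactly when none of these dimensions can be strictly decreased by a nearby flag, which is precisely the requirement that $F^{(1)}_i \cap \tau F^{(1)}_j$ not properly contain $F^{(2)}_i \cap \tau F^{(2)}_j$ for any $\cF^{(2)} \in \cZ_{\cF,E}$; this is nondegeneracy. For the backward implication one uses that the minimal value of each invariant is attained within the $G$--orbit and is realized finitarily, so nondegeneracy forces the invariants to be locally constant and the orbit open. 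The ``in particular'' clause then follows at once: if $F^{(1)}_n \cap \tau F^{(1)}_n = 0$ for every $n$, then for $i \leqq j$ one has $F^{(1)}_i \cap \tau F^{(1)}_j \subseteq F^{(1)}_j \cap \tau F^{(1)}_j = 0$, and symmetrically for $i \geqq j$, so every incidence invariant vanishes; a zero subspace properly contains nothing, whence $\cF^{(1)}$ is automatically nondegenerate and the orbit is open.

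The hard part will be justifying the reduction in the direction needed for nondegeneracy. The condition in Definition \ref{sl-nondegen} quantifies over all $\cF^{(2)}$ in the infinite dimensional ind--variety $\cZ_{\cF,E}$, whereas openness is tested only by finite dimensional perturbations. I would therefore have to verify that a flag $\cF^{(2)}$ achieving a strictly smaller intersection, if one exists anywhere in $\cZ_{\cF,E}$, can be produced inside a finite dimensional $\tau$--stable $V_n$ and arbitrarily close to $\cF^{(1)}$. Establishing this is exactly what makes the finite dimensional reduction legitimate, and it is the step where the finitary nature of $G_0$ and the upper semicontinuity of the incidence dimensions must be combined with care.
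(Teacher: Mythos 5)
Your overall strategy --- exhaust $\cZ_{\cF,E}$ by $\tau$--stable finite dimensional flag manifolds and feed in the finite dimensional openness criteria of \cite{HS2001} and \cite{HW2002} --- is the same reduction the paper uses to prove Theorem \ref{open-orbit-su}, and for the present statement the paper itself supplies no argument at all: it simply cites \cite[Proposition 5.3]{IPW2015}. So your outline is a reasonable reconstruction, and your treatment of the ``in particular'' clause is complete and correct ($F^{(1)}_i \cap \tau F^{(1)}_j \subseteq F^{(1)}_{\max(i,j)} \cap \tau F^{(1)}_{\max(i,j)} = 0$, and the zero subspace properly contains nothing). But as a proof the proposal has a genuine gap, which you locate yourself without closing: the bridge between Definition \ref{sl-nondegen} (a containment condition quantified over all of $\cZ_{\cF,E}$) and finite dimensional minimality of invariants is asserted, not established. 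Your claim that ``none of these dimensions can be strictly decreased by a nearby flag'' is \emph{precisely} the non-containment requirement conflates two inequivalent conditions, and the distinction is exactly what the paper warns about in the sentence preceding Definition \ref{sl-nondegen}: when $\dim V = \infty$ the numbers $\dim(F^{(1)}_i \cap \tau F^{(1)}_j)$ do not suffice. A proper containment of infinite dimensional intersections produces no drop in dimension; conversely, a perturbation that drops the (finitary, relative) dimension of an intersection will generically yield a new intersection in general position, \emph{not} one contained in $F^{(1)}_i \cap \tau F^{(1)}_j$. So in the direction ``not open $\Rightarrow$ degenerate'' you must choose the deformation so that the shrunken intersection actually sits inside the old one, and in the direction ``degenerate $\Rightarrow$ not open'' you must show that a witness $\cF^{(2)}$, which a priori lives anywhere in the ind--variety, can be replaced by one differing from $\cF^{(1)}$ only inside a $\tau$--stable $V_n$; commensurability (Lemma \ref{wk-compat}: $\cF^{(2)} = g\cF^{(1)}$ with $g$ finitary) is the right tool, but you never carry out either verification, and beyond the finite dimensional input these verifications are the entire content of the theorem.

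Two smaller points. First, your assertion that the $G_{0,n}$--orbit of a flag ``is determined by exactly these incidence dimensions'' is false for $SL(2m;\R)$: when the flag contains an $m$--dimensional subspace of $\C^{2m}$ there are two open orbits with identical invariants, distinguished by orientation --- this is why the proof of Corollary \ref{one-open-sl} passes to odd $n$. It does not damage your openness criterion (all orbits with minimal invariants are open), but the classification claim should be weakened accordingly. Second, openness in the direct limit topology means that $G_0(\cF^{(1)}) \cap \cZ_{\cF_n,E_n}$ is open in $\cZ_{\cF_n,E_n}$ for every $n$, and this intersection can be strictly larger than the single orbit $G_{0,n}(\cF^{(1)} \cap V_n)$, since elements of $G_0$ not preserving $V_n$ may still carry $\cF^{(1)}$ into $\cZ_{\cF_n,E_n}$; your ``openness is local and finitary'' heuristic is correct in spirit but needs this standard refinement to be a proof.
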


If $n$ is odd, or if $n = 2m$ and $\dim F \ne m$ for all 
$F \in \cF\cap \C^n$\,, then $G_{n,0} = SL(n;\R)$ has only one open 
orbit on a flag manifold $G_n/Q_n$ in $\C^n$;
if $n = 2m$ even, and some $F \in \cF\cap \C^n$ has dimension $m$, then 
there is an orientation question and $G_{n,0} = SL(n;\R)$ has two open orbits on
$G_n/Q_n$\,.  See \cite[Corollary 2.3]{HS2001}.  Further $G_{n,0} = SL(n;\H)$ 
has a unique open orbit on a flag manifold $G_n/Q_n$ in $\C^{2n}$.  
See \cite[Proposition 3.14]{HW2002}.
This extends to infinite dimensions as follows.

\begin{corollary}\label{one-open-sl}
Let $G_0$ be $SL(\infty;\R)$ or $SL(\infty;\H)$.  Then there is an
open $G_0$--orbit on $\cZ_{\cF,E}$ if and only if $\cZ_{\cF,E}$ contains
a nondegenerate flag, and in that case there is exactly one
open  $G_0$--orbit on $\cZ_{\cF,E}$.
\end{corollary}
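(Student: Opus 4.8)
The plan is to dispatch the existence equivalence immediately from Theorem \ref{sl-extremes} and then to prove uniqueness by a direct-limit argument resting on the cited finite dimensional orbit counts, the whole point being that the finite dimensional orientation ambiguity collapses in the limit.

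First, the equivalence ``there is an open $G_0$--orbit $\iff$ $\cZ_{\cF,E}$ contains a nondegenerate flag'' requires no new work: Theorem \ref{sl-extremes} asserts exactly that $G_0(\cF^{(1)})$ is open precisely when $\cF^{(1)}$ is nondegenerate, so the union of the open orbits is the set of nondegenerate flags in $\cZ_{\cF,E}$, which is nonempty if and only if such a flag exists. It remains to prove that, when they exist, the open orbits coincide.

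For uniqueness, let $\cF^{(1)}, \cF^{(2)} \in \cZ_{\cF,E}$ both be nondegenerate; I want $g \in G_0$ with $g\cF^{(1)} = \cF^{(2)}$. Both are $E$--commensurable to $\cF$, hence to each other, so after refining the bases of Definition \ref{compatible} I may fix an exhaustion $V = \bigcup_n V_n$ by finite dimensional $\tau$--stable subspaces (in the $\R$ case $\tau$ fixes the real basis, in the $\H$ case one takes $E_n$ closed under the pairing $\tau e_{2a-1} = -e_{2a}$). Using the maximality condition of Definition \ref{sl-nondegen}, one checks that for such an exhaustion the truncations $\cF^{(i)}_n := \cF^{(i)} \cap V_n$ lie in open $G_{n,0}$--orbits of the finite dimensional flag manifolds $\cZ_{\cF_n,E_n}$. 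The goal is then to build a compatible sequence $g = \varinjlim g_n$, with each $g_n$ in the appropriate real form of $SL(V_n)$ carrying $\cF^{(1)}_n$ to $\cF^{(2)}_n$ and satisfying $g_{n+1}|_{V_n} = g_n$; the limit lies in $G_0$ and sends $\cF^{(1)}$ to $\cF^{(2)}$. In the quaternionic case this is routine, since by \cite[Proposition 3.14]{HW2002} every level carries a unique open $G_{n,0}$--orbit, so $\cF^{(1)}_n$ and $\cF^{(2)}_n$ are automatically conjugate and the coherent extensions exist unobstructed.

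The real case is where the content lies, and the orientation phenomenon is the main obstacle. By \cite[Corollary 2.3]{HS2001}, at a level with $\dim V_n = 2m$ and some $F \in \cF \cap V_n$ of dimension $m$ there are two open $SL(V_n;\R)$--orbits separated by an orientation invariant, and $\cF^{(1)}_n$, $\cF^{(2)}_n$ may fall into opposite ones. The crux is that this discrepancy dissolves in the limit: because $\dim V = \infty$, any such critical $V_n$ is properly contained in some later $V_N$ whose extra basis vectors do not lie in the offending middle subspace, and the orientation-reversing identification needed to match the two flags can be compensated by a determinant-balancing transposition among those added coordinates, yielding a genuine element of $SL(V_N;\R)$ that realizes the match. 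Thus the two finite dimensional open orbits merge into one $G_0$--orbit. I expect the delicate bookkeeping to be not the existence of a single correcting stage but the need to absorb these signs \emph{coherently} along the whole tower, so that the $g_n$ still form a direct system with $g_{n+1}|_{V_n} = g_n$; arranging the choices at the orientation-critical levels compatibly with all the non-critical levels is the heart of the argument. Once this is done the limit $g \in G_0$ satisfies $g\cF^{(1)} = \cF^{(2)}$, giving exactly one open orbit.
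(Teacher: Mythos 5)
Your first paragraph is fine and matches the paper: the equivalence is indeed immediate from Theorem \ref{sl-extremes}. The uniqueness argument, however, has a genuine gap, and in two places. First, the step you yourself flag as ``the heart of the argument''---absorbing the orientation corrections coherently along the whole tower so that $g_{n+1}|_{V_n} = g_n$---is never carried out; you describe the obstruction and assert it can be arranged, which is not a proof. Even in the quaternionic case, which you call routine, level-wise conjugacy of the truncations $\cF^{(1)}_n$ and $\cF^{(2)}_n$ does not by itself produce a \emph{compatible} system $\{g_n\}$: to extend a chosen $g_n$ to $g_{n+1}$ you need the $G_{n+1,0}$-stabilizer of the relevant data to act transitively on the possible extensions, and no such prolongation statement is supplied. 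Second, even granting a coherent tower, the claim ``the limit lies in $G_0$'' is unjustified: a compatible family $g_{n+1}|_{V_n}=g_n$ defines an invertible linear map of $V=\bigcup V_n$, but $G_0 = \varinjlim SL(n;\R)$ (resp.\ $\varinjlim SL(n;\H)$) consists of \emph{finitary} transformations, fixing all but finitely many elements of $E$. Nothing in your construction forces the $g_n$ to stabilize, so $\varinjlim g_n$ may well lie outside $G_0$; your determinant-balancing transpositions, introduced at infinitely many orientation-critical levels, are exactly the kind of correction that would prevent stabilization.

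The deeper issue is that the whole tower construction is unnecessary. Since two orbits of the same group either coincide or are disjoint, it suffices to show $\cO_1 \cap \cO_2 \ne \emptyset$, and this is how the paper proceeds: writing $\cZ_{\cF,E} = \varinjlim \cZ_{\cF_n,E_n}$ over a cofinal set $\mathbb{S}$ on which each $\cO_k \cap \cZ_{\cF_n,E_n}$ is an open $G_{n,0}$-orbit, one only needs \emph{one} level at which the open orbit is unique. For $SL(\infty;\H)$ this holds at every level by \cite[Proposition 3.14]{HW2002}; for $SL(\infty;\R)$ the paper sidesteps the orientation phenomenon entirely by modifying $\mathbb{S}$ so that every level is odd-dimensional, where \cite[Corollary 2.3]{HS2001} gives a unique open orbit. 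Then $\cO_1 \cap \cZ_{\cF_n,E_n} = \cO_2 \cap \cZ_{\cF_n,E_n}$, so the orbits meet and are equal---no element $g$ with $g\cF^{(1)} = \cF^{(2)}$ ever needs to be built, and no coherence bookkeeping arises. Your observation that the orientation discrepancy dissolves at a later stage is the right intuition, but the parity trick realizes it cleanly, whereas your route leaves both the coherence and the finitarity of the limit unresolved.
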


\begin{proof} The first assertion is immediate from Theorem \ref{sl-extremes}.
For the second, let $\cO_1 = G_0(\cF^{(1)})$ and $\cO_2 = G_0(\cF^{(2)})$
be open $G_0$--orbits on $\cZ_{\cF,E}$.  Then 
$$
\begin{aligned}
\cZ_{\cF,E} = &\varinjlim \cZ_{\cF_n,E_n} \text{ where, for } n
		\text{ in a cofinal subset } \mathbb{S} \subset \Z^+, \\
	&E \text{ is an increasing union of finite subsets } E_n\,,\\
	&V_n = \Span\{E_n\}\text{ and }\cF_n = \cF\cap V_n := (F_k \cap V_n), \\
	&\cZ_{\cF_n,E_n} = G_n/Q_n \text{ flag manifold in } V_n \text{ with }
		Q_n \text{ parabolic in } G_n, \text{ and }\\
	& \cO_k \cap \cZ_{\cF_n,E_n} \text{ is an open } 
		G_{n,0}\text{-- orbit on } \cZ_{\cF_n,E_n}
		\text{ for } k = 1,\, 2.
\end{aligned}
$$
In the $SL(\infty;\R)$ case we modify $\mathbb{S}$.
If $n \in \mathbb{S}$ is even and $n+1 \notin \mathbb{S}$ we replace
$n$ by $n+1$.  If $n \in \mathbb{S}$ is even and $n+1 \in \mathbb{S}$ we
delete $n$.  Thus we may assume that every element of $\mathbb{S}$ is odd.
In the $SL(\infty;\H)$ case we do not modify $\mathbb{S}$.  Thus, in both
cases, if $n \in \mathbb{S}$ then $G_{n,0}$ has a unique open orbit on 
$\cZ_{\cF_n,E_n}$, so $(\cO_1 \cap \cZ_{\cF_n,E_n}) = 
(\cO_2 \cap \cZ_{\cF_n,E_n})$.  Thus $\cO_1$ meets $\cO_2$\,, so
$\cO_1 = \cO_2$\,.
\end{proof}

Combining the argument of the proof of Corollary \ref{one-open-sl} with the
uniqueness of closed orbits in the finite dimensional case \cite{W1969}, we
have the related result

\begin{proposition}\label{one-closed-sl}
{\rm (Compare \cite[Proposition 5.6]{IPW2015}.)}
Let $G_0$ be $SL(\infty;\R)$ or $SL(\infty;\H)$.  Then there is closed
$G_0$--orbit on $\cZ_{\cF,E}$ if and only if each $\tau F_i = F_i$\,,
and in that case there is exactly one closed $G_0$--orbit on $\cZ_{\cF,E}$.
\end{proposition}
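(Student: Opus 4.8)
The plan is to transplant the direct-limit mechanism from the proof of Corollary~\ref{one-open-sl}, replacing ``open'' by ``closed'' and feeding in the finite-dimensional uniqueness of the \emph{closed} orbit from \cite{W1969} in place of the uniqueness of the open orbit. Because the real structures here are $\tau e_a = e_a$ for $SL(\infty;\R)$ and $\tau e_{2a-1} = -e_{2a}$, $\tau e_{2a} = e_{2a-1}$ for $SL(\infty;\H)$, in both cases I may choose the exhaustion $E = \bigcup E_n$ so that each $V_n = \Span\{E_n\}$ satisfies $\tau V_n = V_n$. This $\tau$--stable exhaustion is what lets the global real structure restrict compatibly to every finite piece, and it is the one feature of the closed-orbit argument that goes beyond a verbatim copy of Corollary~\ref{one-open-sl}.

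To prove the existence criterion, first suppose each $\tau F_i = F_i$. Then for every $n$ the truncation $\cF_n = \cF\cap V_n$ is a $\tau$--stable flag in $V_n$, so by \cite{W1969} its orbit $G_{n,0}(\cF_n)$ is the unique closed $G_{n,0}$--orbit in $\cZ_{\cF_n,E_n}$. A subset of $\cZ_{\cF,E} = \varinjlim\cZ_{\cF_n,E_n}$ is closed exactly when each intersection with $\cZ_{\cF_n,E_n}$ is closed, and $G_0(\cF)\cap\cZ_{\cF_n,E_n} = G_{n,0}(\cF_n)$; hence $G_0(\cF)$ is closed. Conversely, if $G_0(\cF^{(1)})$ is closed, then $G_0(\cF^{(1)})\cap\cZ_{\cF_n,E_n}$ is a closed $G_{n,0}$--invariant set equal to the single orbit $G_{n,0}(\cF^{(1)}_n)$; being closed it is the unique closed orbit of \cite{W1969}, so $\cF^{(1)}_n$ is $\tau$--stable in $V_n$. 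Taking unions over the cofinal system gives $\tau F^{(1)}_i = F^{(1)}_i$ for all $i$, and since $\cF^{(1)}$ is $E$--commensurable to $\cF$ we may use it as the base flag, so the condition reads $\tau F_i = F_i$.

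For uniqueness I reuse the argument of Corollary~\ref{one-open-sl} almost verbatim. Writing two closed orbits as $\cO_k = G_0(\cF^{(k)})$ for $k = 1,2$ and intersecting with a cofinal family of $\cZ_{\cF_n,E_n}$, each intersection $\cO_k\cap\cZ_{\cF_n,E_n} = G_{n,0}(\cF^{(k)}_n)$ is \emph{the} unique closed $G_{n,0}$--orbit by \cite{W1969}; hence the two intersections agree for every such $n$, so $\cO_1$ meets $\cO_2$ and $\cO_1 = \cO_2$. Unlike the open case there is no orientation complication and no need to thin $\mathbb{S}$ to odd indices, since in finite dimensions the closed orbit is unique even for $SL(2m;\R)$.

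I expect the main obstacle to be the compatibility identity $G_0(\cF^{(1)})\cap\cZ_{\cF_n,E_n} = G_{n,0}(\cF^{(1)}_n)$, together with the assertion that closedness of a $G_0$--orbit in the ind-topology is detected piece by piece. One must check that intersecting the global orbit with a $\tau$--stable finite piece yields a single $G_{n,0}$--orbit rather than a union, and that a coherent family of $\tau$--stable finite flags assembles into a $\tau$--stable flag in $V$. This is exactly where the $\tau$--stability of the exhaustion and the commensurability bookkeeping of Lemma~\ref{wk-compat} do the real work.
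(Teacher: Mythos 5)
Your overall strategy --- direct limit plus the finite-dimensional uniqueness of the closed orbit from \cite{W1969} --- is exactly what the paper's one-line proof invokes, but your converse direction contains a genuine error. From ``$G_0(\cF^{(1)})\cap\cZ_{\cF_n,E_n}$ is closed, hence is the unique closed $G_{n,0}$--orbit'' you conclude ``so $\cF^{(1)}_n$ is $\tau$--stable in $V_n$.'' That inference is valid for $SL(n;\R)$, whose closed orbit on any flag manifold is the set of real flags, but it is false for $SL(n;\H)$: a $\tau$--stable subspace is quaternionic, hence even-dimensional, so the unique closed $SL(n;\H)$--orbit consists of $\tau$--stable flags only when every member of the flag has even dimension; otherwise it consists of flags merely maximizing $\dim(F_i\cap\tau F_i)$. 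Concretely, $SL(n;\H)$ is transitive on $\P(\C^{2n})$ (already $Sp(1)\cong SU(2)$ is transitive on $\P^1$), so there the closed orbit is the whole projective space and contains no $\tau$--stable line. This is not a cosmetic omission: for $\cF=(0,\C e_1,V)$ with $G_0=SL(\infty;\H)$, the $G_0$--orbit on $\cZ_{\cF,E}=\P(V)$ is the entire ind-variety, hence closed, while no line satisfies $\tau F=F$ --- so membership of the truncations in finite-dimensional closed orbits simply cannot force $\tau$--stability, and any correct treatment of the ``only if'' direction must engage with this quaternionic parity phenomenon (compare the even/odd bookkeeping the paper needs even in the open case, and the precise formulation in \cite{IPW2015}). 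Your proof never does.

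A secondary weak point is the identity $G_0(\cF^{(1)})\cap\cZ_{\cF_n,E_n}=G_{n,0}(\cF^{(1)}_n)$, which you rightly flag as the crux but then assert rather than prove; it is false in general. For the open $SL(\infty;\R)$--orbit on $\P(V)$ the intersection with $\P(V_2)$ is the union of the two half-plane orbits --- exactly the failure that forces the paper to thin $\mathbb{S}$ to odd indices in Corollary \ref{one-open-sl}, so one should not expect a single-orbit identity for free. The repairable substitute for your purposes is: a nonempty closed $G_{n,0}$--invariant subset of $\cZ_{\cF_n,E_n}$ contains the unique closed $G_{n,0}$--orbit (every orbit closure contains a closed orbit). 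With that, your uniqueness argument survives in weakened form (for large $n$ both $\cO_1$ and $\cO_2$ contain the embedded finite-dimensional closed orbit, hence meet, hence coincide), and your forward direction is sound, since there $\tau$--stability of $\cF$, a $\tau$--stable exhaustion, and transitivity of $G_{n,0}$ on $\tau$--stable flags of a fixed type do yield the single-orbit identity. The converse, however, remains broken in the $SL(\infty;\H)$ case for the reason above.
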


\section{Complex Bounded Symmetric Domains}\label{sec4}
\setcounter{equation}{0}

The bounded symmetric domains are important cases of the orbits
considered in Section \ref{sec3}.  In finite dimensions they 
play a pivotal role in complex analysis, moduli theory, cycle 
space theory, automorphic function theory, and and both riemannian 
and complex differential geometry.  In this section we 
extend parts of the finite dimensional bounded domain theory 
to our infinite dimensional setting, following the lines of the classical 
examples in \cite{W1972}.  
\smallskip

In the classical theory one has the bounded symmetric domain $D_0 = G_0(z_0)$, 
its compact dual hermitian symmetric space $\cZ$, the Borel embedding
$D_0 \hookrightarrow \cZ$, and the Harish-Chandra embedding
$\xi^{-1}|_{D_0}: D_0 \hookrightarrow \gm^+$.  In the Harish-Chandra
embedding, $\gm^+ \subset \gg$ is a commutative subalgebra that represents the
holomorphic tangent space and $\xi: \gm^+ \to \cZ$ by $\xi(X) = \exp(X)z_0$\,.
A maximal set of strongly orthogonal noncompact positive roots 
$\{\alpha_1, \dots , \alpha_r\}$, $r = \rank D_0$\,, defines a set
$\{c_1, \dots , c_r\}$ of partial Cayley transforms, and the $G_0$--orbits
on $\cZ$ are exactly the $G_0(c_1\dots c_kc_{k+1}^2\dots c_{k+\ell}^2 z_0)$ 
where $k, \ell \geqq 0$ and $k+\ell \leqq r$.  The open orbits are the 
$G_0(c_1^2\dots c_\ell^2 z_0)$, i.e. the ones with $k = 0$, and
$G_0(c_1\dots c_r z_0)$ is the Bergman--Shilov boundary of $D_0$.  See
\cite{W1969}.  It is not so difficult to verify that this theory goes through 
{\em mutatis mutandis} for the infinite
dimensional bounded symmetric domains as well, with the one restriction that
$k + \ell < \infty$.
\smallskip

There are only four classes of (finitary) infinite dimensional complex
bounded symmetric domains: the $SU(\infty,q)/S(U(\infty)\times U(q))$
with $q \leqq \infty$, the $Sp(\infty;\R)/U(\infty)$, 
the $SO^*(\infty)/U(\infty)$, and the $SO(\infty,2)/[SO(\infty) \times SO(2)]$.
Their respective symmetric space ranks are $q$, $\infty$, $\infty$ and $2$.
In the all four cases it is easier to use some linear algebra, as in
the examples worked out in \cite{W1972}, than to stick to the general theory.
But of course we indicate the connection.
The fourth case, however, where $\cZ$ is a quadric in an infinite dimensional
complex projective space, is not as straightforward as the others.  Now we
run through the cases.

\subsection{The Complex Bounded Symmetric Domain for $SU(\infty,q)$.}
\label{ssec4a}
\setcounter{equation}{0}

We study the bounded symmetric domain $D_0$ associated to 
$G_0 = SU(\infty,q)$, $q \leqq \infty$.  Start with
$$
\begin{aligned}
&E = \{\dots, e_{-3}, e_{-2}, e_{-1}; e_1, e_2, \dots , e_q\} 
	\text{ for } q < \infty\\
&E = \{\dots, e_{-3}, e_{-2}, e_{-1}; e_1, e_2, e_3, \dots \}
	\text{ for } q = \infty
\end{aligned}
$$
where the hermitian form $h$ is given by
$$ 
h(e_i,e_j) = \delta_{i,j} \text{ for } i < 0,\,\,
h(e_i,e_j) = -\delta_{i,j} \text{ for } i > 0.
$$
Let $F = \Span\{e_i \mid i > 0\}$.  As in Example \ref{grass-ex}, 
$\cF = (0,F,V)$ is compatible with $E$.  Also, $G_0(\cF)$ is 
open in $\cZ_{\cF,E}$.  The bounded symmetric domain is 
$D_0 = G_0(\cF) \in \cZ_{\cF,E}$.
Note that $\cZ_{\cF,E}$ is a complex Grassmann manifold and the domain is
\begin{equation}\label{bdedsym-su}
D_0 = \{\cF^{(1)} = (0,F^{(1)},V) \in \cZ_{\cF,E} \mid F^{(1)} 
	\text{ is a maximal negative definite subspace of } V\}.
\end{equation}
We go on to see why it is a bounded symmetric domain.
\smallskip

We will use the $h$--orthogonal
decomposition $V = V_+ \oplus V_-$ where $V_+ = \Span\{e_i \mid i < 0\}$
and $V_- = \Span\{e_i \mid i > 0\}$  and the orthogonal projections
$\pi_\pm: V \to V_\pm$\,.  The kernel of $\pi_-$ is $h$--positive definite
so it has zero intersection with $F^{(1)}$ for any $\cF^{(1)} = (0,F^{(1)},V) \in D_0$.
Thus $\pi_-: F^{(1)} \cong V_-$ is injective.  Since $F^{(1)}$ is a maximal negative
definite subspace $\pi_-: F^{(1)} \cong V_-$ is surjective as well.  Now we
have a well defined linear map
\begin{equation}\label{def-so-z}
Z_{F^{(1)}}: V_- \to V_+ \text{ defined by } \pi_-(x) \mapsto \pi_+(x)
	\text{ for } x \in F^{(1)}.
\end{equation}
Since $\cF^{(1)}$ is weakly compatible with $E$, the matrix of $F^{(1)}$ relative
to $E$ has only finitely many nonzero entries.  In other words $Z_{F^{(1)}}$
is finitary.  Using $\pi_-: F^{(1)} \cong V_-$ and the basis $\{e_i \mid i > 0\}$
of $F = V_-$ we have a basis $\{e''_i\}$ of $F^{(1)}$ defined by $\pi_-(e''_i)
= e_i$\,.  Write $e''_i = e_i + \sum_{j<0} z_{j,i}e_j$; then
$(z_{j,i})$ is the matrix of $Z_{F^{(1)}}$.  The fact that $F^{(1)}$ is $h$--negative
definite, in other words $(h(e''_i,e''_\ell)) \ll 0$, translates to the
matrix condition $I - (z_{j,i})^*(z_{j,i}) \ggg 0$, equivalently the
operator condition $I - Z_{F^{(1)}}^* Z_{F^{(1)}} \ggg 0$.  
\smallskip

Conversely if $Z: V_- \to V_+$ is finitary and satisfies $I - Z^*Z \ggg 0$,
then the column span of its matrix relative to $E$ is a maximal negative 
definite subspace $F^{(1)}$, and $\cF^{(1)} = (0,F^{(1)},V) \in D_0$\,.
\smallskip

The block form matrices of elements of $G_0$ act by
$\left ( \begin{smallmatrix} A & B \\ C & D \end{smallmatrix}\right ) :
\left ( \begin{smallmatrix} Z \\ I \end{smallmatrix}\right ) \to
\left ( \begin{smallmatrix} AZ+B \\ CZ+D \end{smallmatrix}\right )$,
which has the same column span as
$\left ( \begin{smallmatrix} (AZ+B)(CZ+D)^{-1} \\ I \end{smallmatrix}\right )$.
So $G_0$ acts by linear fractional transformations,
$\left ( \begin{smallmatrix} A & B \\ C & D \end{smallmatrix}\right ) : Z
\to (AZ+B)(CZ+D)^{-1}$.  Now we summarize.

\begin{proposition}\label{bded-realization-su}
$D_0$ is realized as the bounded domain consisting of all finitary
$Z: V_- \to V_+$
such that $I - Z^*Z \ggg 0$.  In that realization the action of $G_0$ is 
$\left ( \begin{smallmatrix} A & B \\ C & D \end{smallmatrix}\right ) : Z
\to (AZ+B)(CZ+D)^{-1}$.
\end{proposition}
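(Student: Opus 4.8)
The plan is to assemble the computations already carried out in the paragraphs above into a single bijective correspondence and then to verify the action formula. Define $\Phi$ from $D_0$ to the set $\{Z: V_- \to V_+ \mid Z \text{ finitary},\ I - Z^*Z \ggg 0\}$ by $\Phi(\cF^{(1)}) = Z_{F^{(1)}}$, the operator of (\ref{def-so-z}). First I would record that $\Phi$ is well defined and injective: the discussion preceding the statement shows that $\pi_-\colon F^{(1)} \to V_-$ is a linear isomorphism for every $\cF^{(1)} = (0,F^{(1)},V) \in D_0$, so $Z_{F^{(1)}} = \pi_+ \circ (\pi_-|_{F^{(1)}})^{-1}$ is a genuine linear map that both is determined by $F^{(1)}$ and determines it, namely as its graph. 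Weak compatibility of $\cF^{(1)}$ with $E$ forces the matrix $(z_{j,i})$ to have only finitely many nonzero entries, so $Z_{F^{(1)}}$ is finitary, and the negative-definiteness computation $(h(e''_i,e''_\ell)) \ll 0 \Leftrightarrow I - Z_{F^{(1)}}^*Z_{F^{(1)}} \ggg 0$ places the image of $\Phi$ in the asserted target set.

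Next I would establish surjectivity, which is exactly the converse construction already given: for finitary $Z$ with $I - Z^*Z \ggg 0$, the column span $F^{(1)}$ of $\left(\begin{smallmatrix} Z \\ I \end{smallmatrix}\right)$ is a maximal $h$--negative definite subspace, so $\cF^{(1)} = (0,F^{(1)},V) \in D_0$ and $\Phi(\cF^{(1)}) = Z$. The boundedness of the target is visible from the defining inequality itself: $I - Z^*Z \ggg 0$ says precisely that $\|Zv\| < \|v\|$ for all $0 \ne v \in V_-$, so every such $Z$ is a strict contraction and the domain is the finitary operator unit ball. Since $\Phi$ is nothing but the standard affine-chart coordinate on the Grassmann ind-manifold $\cZ_{\cF,E}$ (the big cell on which $\pi_-$ restricts to an isomorphism), it is a biholomorphism onto its image, so the realization holds as complex ind-manifolds.

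Finally I would verify the action. Writing $g \in G_0$ in the block form $\left(\begin{smallmatrix} A & B \\ C & D \end{smallmatrix}\right)$ relative to $V = V_+ \oplus V_-$, the subspace $g(F^{(1)})$ is the column span of $\left(\begin{smallmatrix} AZ+B \\ CZ+D \end{smallmatrix}\right)$; dividing out the lower block identifies $\Phi(g\cF^{(1)})$ with $(AZ+B)(CZ+D)^{-1}$, as claimed. The one point needing justification — and the step I expect to be the main obstacle — is the invertibility of $CZ+D$ on $V_-$, since the fraction is otherwise formal. This follows cleanly from the geometry rather than from any matrix estimate: because $g$ preserves $h$, it carries the maximal negative definite subspace $F^{(1)}$ to another maximal negative definite subspace $g(F^{(1)})$, on which $\pi_-$ is again an isomorphism by the first paragraph. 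Under the identification $\pi_-\colon F^{(1)} \cong V_-$ this isomorphism is exactly the operator $CZ+D$, so $CZ+D$ is invertible for every $\cF^{(1)} \in D_0$, and the linear fractional formula is well defined throughout $D_0$.
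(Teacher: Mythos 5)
Your proposal is correct and takes essentially the same route as the paper, which states this proposition as a summary of exactly the computations you assemble: the graph correspondence $F^{(1)} \leftrightarrow Z_{F^{(1)}}$ via the projections $\pi_\pm$, finitarity from weak compatibility with $E$, the translation of negative definiteness into $I - Z^*Z \ggg 0$, the converse column-span construction, and the linear fractional action on column spans. Your explicit check that $CZ+D$ is invertible --- identifying it with the isomorphism $\pi_-\colon g(F^{(1)}) \cong V_-$ precomposed with the parametrization of $F^{(1)}$ --- fills in a step the paper leaves implicit, and it is the right argument.
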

\medskip

\centerline{\bf Orbits}
\medskip

\noindent
There are $q+1$ open $G_0$--orbits on $\cZ_{\cF,E}$:
$$
\begin{aligned}
D_k = G_0(0,F_{(k)},V) \text{ where } &F_{(k)} = 
	{\Span}\{e_{-k}, \dots , e_{-1}; e_{k+1}, \dots , e_q\}
		\text{ if } q < \infty, \\
&F_{(k)} = {\Span}\{e_{-k}, \dots , e_{-1}; e_{k+1}, e_{k+2}, \dots\}
		\text{ if } q = \infty, \\
\end{aligned}
$$
If $q = 1$ then $D_1$ and $D_0$ are the upper and lower ``hemispheres'' in
an infinite version of the Riemann sphere; they are related by the square of
a Cayley transform.  If $q > 1$ then $D_0$ is the only convex $D_k$, but the
others are reached by squares of partial Cayley transforms  applied
to $F = F_0$ as in \cite{W1972}, \cite{KW1965} and \cite{WK1965}. 
\smallskip

In this bounded symmetric domain setting, the $G_0$--orbits on $\cZ_{\cF,E}$
of signature $(a,b,c) = (\pos,\neg,\nul)$ have $a$ and $c$ finite and
$\leqq q$ because each $\cF^{(1)} = (0,F^{(1)},V) \in \cZ_{\cF,E}$ is weakly compatible 
with $E$.  We denote those orbits by
\begin{equation}
\begin{aligned}
D_{a,b,c} &= G_0(0,(F_+ + F_- + F_0),V) \text{ where }\\
	&F_0 = \Span\{e_{-c}+e_c, \dots ,e_{-1}+e_1\} \text{ (null) }\\
	&F_+ = \Span\{e_{-c-a}, \dots , e_{-c-1}\} \text{ (positive)}\\
	&F_- = \Span\{e_{c+1}, \dots , e_{c+b}\} \text{ if } q < \infty,
		\Span\{e_{c+1},e_{c+2}, \dots\} \text{ if } q = \infty
			\text{ (negative)}.
\end{aligned}
\end{equation}
The open orbits are the $D_a = D_{a,b,0}\,, a < \infty$ and $a+b = q$.  
In other words, they are the ones for $c = 0$.  If $q < \infty$ there is
a unique closed orbit, $D_{0,0,q}$, consisting of the 
$\cF^{(1)} = \{F^{(1)}\} \in \cZ_{\cF,E}$ for which $F^{(1)}$ is null.  It is in the 
closure of every orbit. If $q = \infty$ there is no closed orbit.
\smallskip

One goes from the initial orbit $D_{0,q,0} = G_0(\cF)$ to any $D_{a,b,c}$ 
by applying a product of partial Cayley transforms to $F$.  Specifically,
$D_{a,b,c} = G_0(c_1\dots c_c c^2_{c+1}\dots c^2_{a+c} \cF)$ where the
partial Cayley transforms $c_k$ (corresponding to $0 \to 1 \to \infty
\to -1 \to 0$ in one variable) are given by
\begin{equation}\label{cayley-su}
c_k(e_{-k}) = \tfrac{1}{\sqrt{2}} (e_{-k} - e_k), \,\,
c_k(e_k) = \tfrac{1}{\sqrt{2}}(e_{-k} + e_k), \,\,
c_k(e_j) = e_j \text{ for } j \ne \pm k.
\end{equation}
Here $1 \leqq k \leqq q$ when $q < \infty$ and $1 \leqq k < \infty$ when
$q = \infty$.  In particular one reaches the boundary (of $D_0 = D_{0,q,0}$)
orbits by a product without repetition of $\leqq q$ partial Cayley transforms, 
and if $q < \infty$ the closed orbit is $D_{0,0,q} = G_0(c_1\dots c_q \cF)$.
If $q < \infty$ the closed orbit is the Bergman-Shilov boundary of $D_0$\,.
\smallskip

\subsection{The Complex Bounded Symmetric Domain for $Sp(\infty;\mathbb{R})$.}
\label{ssec4b}
\setcounter{equation}{0}

Now let $G_0 = Sp(\infty;\R)$. It is defined relative to the basis
$
E = \{\dots, e_{-3}, e_{-2}, e_{-1}; e_1, e_2, e_3, \dots \}
$
by the hermitian form $h$ and the antisymmetric bilinear form $b$,
$$
\begin{aligned}
&h(e_i,e_j) = \delta_{i,j} \text{ for } i < 0,\,\,
h(e_i,e_j) = -\delta_{i,j} \text{ for } i > 0 \text{ and } \\
&b(e_i,e_j) = \delta_{i+j,0} \text{ for } i < 0,\,\,
b(e_i,e_j) = -\delta_{i+j,0} \text{ for } i > 0.
\end{aligned}
$$
The domain $D_0$ consists of the maximal
$h$--negative definite $b$--isotropic subspaces of $V$ in $\cZ_{\cF,E}$\,. 
In other words, let
$F = \Span\{e_i \mid i > 0\}$.  Evidently $\cF = (0,F,V)$ is
compatible with $E$ and $G_0(\cF)$ is open
in $\cZ_{\cF,E}$.  The bounded symmetric domain is 
$$
D_0 := G_0(\cF) \subset \cZ_{\cF,E}.
$$
Note that $\cZ_{\cF,E}$ is contained in the complex Grassmann manifold 
of Section \ref{ssec4a} for $q = \infty$.
\smallskip

In Lie group terms, $D_0 \cong Sp(\infty;\R)/U(\infty)$ where $U(\infty)$ is
the stabilizer of $F$.  Let both $\cF$ and
$\cF_{(0)}$  denote the flag $(0,F,V)$, so $D_0 = G_0(\cF_{(0)})$\,.
The open $G_0$--orbits on $\cZ_{\cF,E}$ are the $D_k = G_0(\cF_{(k)})$ where
$$
F_{(k)} = \Span\{e_{-k}, e_{-k+1}, \dots , e_{-1};
e_{k+1}, e_{k+2}, \dots \} \text{ and } \cF_{(k)} = (0,F_{(k)},V)
$$ 
for integers $k \geqq 0$.  Note that
$D_k \cong Sp(\infty;\R)/U(k,\infty - k)$ where the $\infty - k$ refers
to the action on $\Span\{e_{k+1}, e_{k+2}, \dots \}$. 
Also, $F_{(k)}^\perp = F_{(k)}$ relative to $b$, so $\cF_{(k)}^\perp
= \cF_{(k)}$.
\smallskip

The corresponding $D_\infty$ is the $h$--orthocomplement of $D_0$\,, orbit of
$(0,F_{(\infty)},V)$ where $F_{(\infty)} :={\Span}\{e_i \mid i < 0\}$.
\smallskip

As in the $SU$ setting, the partial Cayley transforms $c_j$ are given by 
(\ref{cayley-su}) and one passes from $D_0$ to $D_k$ by
$F_{(k)} = c_1^2c_2^2\dots c_k^2 F_{(0)}$.  Compare \cite{W1972}.
Similarly, as in \cite{WK1965}, the boundary of $D_0$ is the union of the 
orbits $G_0(c_1c_2\dots c_\ell  F_{(0)})$, but here there is no closed
$G_0$--orbit on $\cF_{(k)}$, $k < \infty$, and thus no Bergman--Shilov 
boundary of $D_0$\,.
\smallskip

The calculations for $D_0$ to be a bounded symmetric domain are essentially
the same as those in Section \ref{ssec4a}.  The result is

\begin{proposition}\label{bded-realization-sp}
$D_0$ is realized as the bounded domain consisting of all finitary
$Z: V_- \to V_+$ such that the matrix of $Z$ is symmetric and 
$I - Z^*Z \ggg 0$.  In that realization the action of $G_0$ is
$\left ( \begin{smallmatrix} A & B \\ C & D \end{smallmatrix}\right ) : Z
\to (AZ+B)(CZ+D)^{-1}$.
\end{proposition}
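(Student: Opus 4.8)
The plan is to reduce everything to the $SU(\infty,\infty)$ computation already completed in Section \ref{ssec4a}. As noted in the text, $\cZ_{\cF,E}$ here sits inside the $q=\infty$ Grassmann manifold of Section \ref{ssec4a}, and $G_0 = Sp(\infty;\R) \subset SU(\infty,\infty)$ preserves the hermitian form $h$. Consequently Proposition \ref{bded-realization-su} already sets up the bijection $F^{(1)} \mapsto Z_{F^{(1)}}$ between maximal $h$--negative definite subspaces and finitary maps $Z_{F^{(1)}}: V_- \to V_+$ satisfying $I - Z_{F^{(1)}}^* Z_{F^{(1)}} \ggg 0$, and it supplies the linear fractional action. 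The only additional constraint distinguishing the symplectic domain is $b$--isotropy of $F^{(1)}$, so the entire task is to show that, under this correspondence, $b$--isotropy of $F^{(1)}$ is equivalent to symmetry of the matrix of $Z_{F^{(1)}}$.

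The key step is to translate $b$--isotropy into a matrix condition. As in Section \ref{ssec4a}, use the basis $\{e''_i = e_i + \sum_{j<0} z_{j,i} e_j \mid i > 0\}$ of $F^{(1)}$, where $(z_{j,i})$ is the matrix of $Z_{F^{(1)}}$. Isotropy asks that $b(e''_i, e''_k) = 0$ for all $i,k > 0$. Using $b(e_i,e_j) = \delta_{i+j,0}$ for $i<0$ and $b(e_i,e_j) = -\delta_{i+j,0}$ for $i>0$, the terms in which both indices are positive or both negative vanish, and a direct expansion gives
$$
b(e''_i, e''_k) = z_{-k,i} - z_{-i,k}.
$$
Hence $F^{(1)}$ is $b$--isotropic if and only if $z_{-k,i} = z_{-i,k}$ for all $i,k>0$; that is, precisely when the matrix of $Z_{F^{(1)}}$ is symmetric, once rows (indexed by $V_+$, $j<0$) and columns (indexed by $V_-$, $i>0$) are matched by the $b$--pairing $e_{-i} \leftrightarrow e_i$. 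Conversely, any finitary symmetric $Z$ with $I - Z^*Z \ggg 0$ has column span a maximal $h$--negative definite subspace by the argument of Section \ref{ssec4a}, and the same computation shows that subspace is $b$--isotropic, so it represents a point of $D_0$. The linear fractional action is inherited verbatim from Proposition \ref{bded-realization-su}, and it preserves symmetry of $Z$ automatically, since $G_0$ preserves $b$ and therefore carries $b$--isotropic subspaces to $b$--isotropic subspaces.

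The one point requiring genuine care is the index bookkeeping in the displayed computation: because $b$ is \emph{antisymmetric} yet pairs $e_{-i}$ with $e_i$ with opposite signs on $V_+$ and $V_-$, the antisymmetry of the form combines with the antisymmetry of the pairing convention to yield a \emph{symmetric} (rather than antisymmetric) condition on $Z$. This sign interplay is the whole content of the proposition. Everything is finitary, so the sums are finite and there are no convergence issues, and the analytic ingredients --- the definiteness condition $I - Z^*Z \ggg 0$ and the action formula --- are identical to the $SU$ case and need no reproof.
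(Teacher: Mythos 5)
Your proposal is correct and follows essentially the same route as the paper, which states that ``the calculations for $D_0$ to be a bounded symmetric domain are essentially the same as those in Section \ref{ssec4a}'' and leaves the isotropy condition to the reader (the paper carries out the matching computation $b(z_j,z_\ell) = z_{j,\ell} - z_{\ell,j}$ explicitly only later, in Section \ref{ssec5c}). Your expansion $b(e''_i,e''_k) = z_{-k,i} - z_{-i,k}$ is the correct counterpart of that computation on the negative-definite side, so you have merely made explicit the step the paper compresses.
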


\begin{corollary}\label{su2sp}
The bounded symmetric domain for $Sp(\infty;\R)$ is a totally geodesic
submanifold of the bounded symmetric domain for $SU(\infty,\infty)$.
\end{corollary}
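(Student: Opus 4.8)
The plan is to leverage the two explicit matrix realizations already in hand. Write $D_0^{SU}$ and $D_0^{Sp}$ for the bounded symmetric domains of $SU(\infty,\infty)$ and $Sp(\infty;\R)$. By Proposition \ref{bded-realization-su}, $D_0^{SU}$ is the set of all finitary $Z\colon V_- \to V_+$ with $I - Z^*Z \ggg 0$, and by Proposition \ref{bded-realization-sp}, $D_0^{Sp}$ is the subset of those $Z$ whose matrix is symmetric, relative to the pairing $e_i \leftrightarrow e_{-i}$ identifying the bases of $V_-$ and $V_+$. So the inclusion $D_0^{Sp} \hookrightarrow D_0^{SU}$ is precisely the inclusion of the symmetric locus. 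The first thing I would record is that this symmetric locus is exactly the fixed-point set of the involution $\sigma\colon Z \mapsto {}^t Z$, which is a well defined biholomorphism of $D_0^{SU}$ since transposition preserves both the finitary condition and the positivity $I - Z^*Z \ggg 0$, and since $\sigma^2 = \mathrm{id}$.

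Next I would reduce to finite dimensions through the exhaustion $V = \varinjlim V_n$ of Section \ref{ssec2a}: this realizes $D_0^{SU} = \varinjlim D_0^{SU,n}$ and $D_0^{Sp} = \varinjlim D_0^{Sp,n}$ as direct limits of the finite dimensional domains $SU(n,n)/S(U(n)\times U(n))$ and $Sp(n;\R)/U(n)$, with $\sigma = \varinjlim \sigma_n$ and each $\sigma_n$ again transposition. On each $D_0^{SU,n}$ the map $\sigma_n$ is a biholomorphism that is its own inverse, hence a Bergman isometry fixing the base point $Z = 0$; its fixed-point set is exactly the symmetric matrices, namely $D_0^{Sp,n}$. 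The standard principle that the fixed-point set of an isometric involution is totally geodesic (a geodesic issuing from a fixed point in a $\sigma_n$-fixed direction is carried to itself, hence remains in the fixed set) then gives that $D_0^{Sp,n} \hookrightarrow D_0^{SU,n}$ is totally geodesic; this is the classical realization of the Siegel domain as a totally geodesic subdomain, as in \cite{W1972}.

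Finally I would pass to the limit. Because total geodesy is a pointwise condition, namely vanishing of the second fundamental form, it suffices to check it at each point of $D_0^{Sp}$ in each tangent direction, and any geodesic of $D_0^{SU}$ tangent to $D_0^{Sp}$ already lies in a finite dimensional slice $D_0^{SU,n}$ for $n$ large, where the previous step applies. Hence the embedding is totally geodesic. The step I expect to be the main obstacle is not any single computation but rather making the Riemannian vocabulary (invariant metric, geodesics, second fundamental form) legitimate on the ind-manifold $D_0^{SU}$; the resolution is exactly this reduction to the finite dimensional slices, where the Bergman metric and the classical totally geodesic embedding are available and compatible with the direct-limit maps.
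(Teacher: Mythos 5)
Your proposal is correct and is essentially the argument the paper intends: the paper states Corollary \ref{su2sp} as an immediate consequence of the matrix realization in Proposition \ref{bded-realization-sp}, where $D_0^{Sp}$ appears as the symmetric locus inside the $SU(\infty,\infty)$ domain of Proposition \ref{bded-realization-su}, and your identification of that locus as the fixed-point set of the holomorphic isometric involution $Z \mapsto {}^t Z$, checked on the finite dimensional slices as in \cite{W1972}, is exactly the classical justification being invoked. Your added care in reducing the Riemannian notions to the exhaustion by $Sp(n;\R)/U(n) \hookrightarrow SU(n,n)/S(U(n)\times U(n))$ is a legitimate filling-in of detail the paper leaves implicit, not a different route.
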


As in Section \ref{ssec4a} for $SU(\infty,\infty)$, the $G_0$--orbit 
of signature $(pos,neg,nul)=(a,b,c)$, $a$ and $c$ finite, is
$$
\begin{aligned}
D_{a,b,c} &= G_0(0,(F_+ + F_- + F_0),V) \text{ where }\\
        &F_0 = \Span\{(e_{-c}+e_c), \dots ,(e_{-1}+e_1)\} \text{ ($h$--null),}\\
        &F_+ = \Span\{e_{-c-a}, \dots , e_{-c-1}\} \text{ ($h$--positive definite),}\\
        &F_- = \Span\{e_{c+1},e_{c+2}, \dots\} \text{ ($h$--negative definite)},
\end{aligned}
$$
and every $G_0$--orbit on $\cZ_{\cF,E}$ is one of those $D_{a,b,c}$.  We 
always have $b = \dim F_- = \infty$.  The open orbits are the case $c = 0$ 
mentioned above: $D_k = D_{k,b,0}$\,.  
\smallskip

\subsection{The Complex Bounded Symmetric Domain for $SO^*(\infty)$.}
\label{ssec4c}
\setcounter{equation}{0}
\smallskip

Next, we let $G_0 = SO^*(\infty)$. It is defined relative to the basis
$
E = \{\dots, e_{-3}, e_{-2}, e_{-1}; e_1, e_2, e_3, \dots \}
$
by the hermitian form $h$ and the symmetric bilinear form $b$,
$$
h(e_i,e_j) = \delta_{i,j} \text{ for } i < 0,\,\,
h(e_i,e_j) = -\delta_{i,j} \text{ for } i > 0, \text{ and }
b(e_i,e_j) = \delta_{i+j,0} \text{ for all } i, j.
$$
The domain $D_0$ consists of the maximal
$h$--negative definite $b$--isotropic subspaces of $V$ that are 
weakly compatible with $E$.  In other words, let
$F = \Span\{e_i \mid i > 0\}$.  Evidently $\cF = (0,F,V)$ is
compatible with $E$ and $G_0(\cF)$ is open
in $\cZ_{\cF,E}$.  The bounded symmetric domain is 
$$
D_0 := G_0(\cF) \subset \cZ_{\cF,E}.
$$
Again, $\cZ_{\cF,E}$ is contained in the complex Grassmann manifold 
of Section \ref{ssec4a} for $q = \infty$.
\smallskip

In Lie group terms, $D_0 \cong SO^*(\infty)/U(\infty)$ where $U(\infty)$ is
the stabilizer of $F$.  Let both $\cF$ and
$\cF_{(0)}$  denote the flag $(0,F,V)$, so $D_0 = G_0(\cF_{(0)})$\,.
The open $G_0$--orbits on $\cZ_{\cF,E}$ are the $D_k = G_0(\cF_{(k)})$ where
$$
F_{(k)} = \Span\{e_{-k}, e_{-k+1}, \dots , e_{-1};
e_{k+1}, e_{k+2}, \dots \} \text{ and } \cF_{(k)} = (0,F_{(k)},V)
$$ 
for integers $k \geqq 0$.  Note that
$D_k \cong SO^*(\infty)/U(k,\infty - k)$ where the $\infty - k$ refers
to the action on $\Span\{e_{k+1}, e_{k+2}, \dots \}$. 
Also, $F_{(k)}^\perp = F_{(k)}$ relative to $b$, so $\cF_{(k)}^\perp
= \cF_{(k)}$.
\smallskip

The corresponding $D_\infty := G_0(\cF_{(\infty)})$ where $F_{(\infty)}$ 
is the $h$--orthocomplement $\Span\{e_i \mid i < 0\}$ of $F_0$\,.
\smallskip

Here the partial Cayley transforms are not given by (\ref{cayley-su}), but
rather by
\begin{equation}\label{cayley-so*}
\begin{aligned}
&c_k(e_{-2k}) \,\,= \tfrac{1}{\sqrt{2}}(e_{-2k} - e_{2k}),\,
\phantom{XXXX}c_k(e_{-2k+1}) = \tfrac{1}{\sqrt{2}}(e_{-2k+1} + e_{2k-1}),\\
&c_k(e_{2k-1}) = \tfrac{1}{\sqrt{2}}(-e_{-2k+1} + e_{2k-1}),\,
\phantom{X}c_k(e_{2k}) \,\,\,\,\,\,\,\,\, = 
	\tfrac{1}{\sqrt{2}}(e_{-2k} + e_{2k}),\\
&c_k(e_j) = e_j \text{ for } j \notin \{-2k,-2k+1,2k-1,2k\}.
\end{aligned}
\end{equation}

As in the $SU$ and $Sp$ settings, one passes from $D_0$ to $D_k$ using
$F_{(k)} = c_1^2c_2^2\dots c_k^2 F_{(0)}$ where the $c_j$ are partial
Cayley transforms defined by (\ref{cayley-so*}), as in \cite{W1972}.  
Similarly, as in \cite{WK1965}, the boundary of $D_0$ is the union of the 
orbits $G_0(c_1c_2\dots c_\ell  F_{(0)})$, but here there is no closed
$G_0$--orbit on $\cF_{(k)}$, $k < \infty$, and thus no Bergman--Shilov 
boundary of $D_0$\,.
\smallskip

The calculations for $D_0$ to be a bounded symmetric domain are essentially
the same as those in Section \ref{ssec4a}.  The result is

\begin{proposition}\label{bded-realization-so*}
$D_0$ is realized as the bounded domain consisting of all finitary
$Z: V_- \to V_+$ such that the matrix of $Z$ is antisymmetric and 
$I - Z^*Z \ggg 0$.  In that realization the action of $G_0$ is
$\left ( \begin{smallmatrix} A & B \\ C & D \end{smallmatrix}\right ) : Z
\to (AZ+B)(CZ+D)^{-1}$.
\end{proposition}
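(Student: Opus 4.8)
The plan is to build on the $SU(\infty,\infty)$ realization of Proposition \ref{bded-realization-su}, since (as already noted just above) $\cZ_{\cF,E}$ for $SO^*(\infty)$ is contained in the complex Grassmann manifold of Section \ref{ssec4a} with $q = \infty$. Thus every $\cF^{(1)} = (0,F^{(1)},V) \in D_0$ with $F^{(1)}$ maximal $h$--negative definite is already encoded by a finitary operator $Z = Z_{F^{(1)}}: V_- \to V_+$, via $\pi_-(x) \mapsto \pi_+(x)$ for $x \in F^{(1)}$, and the $h$--negative definiteness is exactly $I - Z^*Z \ggg 0$. The only new ingredient in the $SO^*(\infty)$ setting is the $b$--isotropy constraint $b(F^{(1)},F^{(1)}) = 0$ that defines the domain, so the whole proof reduces to translating that single condition into a condition on the matrix of $Z$.

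First I would fix the basis $\{e''_i\}_{i>0}$ of $F^{(1)}$ with $\pi_-(e''_i) = e_i$, written $e''_i = e_i + \sum_{j<0} z_{j,i}e_j$, exactly as in Section \ref{ssec4a}; then $(z_{j,i})$ is the matrix of $Z$. Using $b(e_a,e_b) = \delta_{a+b,0}$ for all $a,b$, a direct bilinear expansion collapses to $b(e''_i,e''_\ell) = z_{-i,\ell} + z_{-\ell,i}$, because the pairings $b(e_i,e_\ell)$ (both indices positive) and $b(e_j,e_m)$ (both negative) vanish, and the only surviving cross terms pair an index against its negative. Hence $b$--isotropy, $b(e''_i,e''_\ell) = 0$ for all $i,\ell > 0$, is equivalent to $z_{-i,\ell} = -z_{-\ell,i}$.

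The remaining step is bookkeeping: to speak of the matrix of $Z$ as antisymmetric one identifies $V_-$ with $V_+$ through the $b$--pairing $e_i \leftrightarrow e_{-i}$ (the pairing making $b(e_i,e_{-i}) = 1$), and under this identification the relation $z_{-i,\ell} = -z_{-\ell,i}$ reads precisely as antisymmetry of the matrix of $Z$. Conversely, any finitary antisymmetric $Z$ with $I - Z^*Z \ggg 0$ has column span equal to a maximal $h$--negative definite, $b$--isotropic subspace, hence a point of $D_0$. The linear fractional action $\left( \begin{smallmatrix} A & B \\ C & D \end{smallmatrix}\right) : Z \to (AZ+B)(CZ+D)^{-1}$ is inherited verbatim from the $SU(\infty,\infty)$ case, since $G_0 = SO^*(\infty) \subset SU(\infty,\infty)$; one only checks that elements of $G_0$ preserve the antisymmetric locus, which is automatic because they preserve $b$.

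I do not expect a genuine obstacle here: the entire content is the computation $b(e''_i,e''_\ell) = z_{-i,\ell} + z_{-\ell,i}$. The one point demanding care is the sign, which is exactly what separates this case from $Sp(\infty;\R)$: there $b$ is antisymmetric, so the two surviving cross terms carry opposite signs and one lands on a \emph{symmetric} $Z$ (Proposition \ref{bded-realization-sp}), whereas here $b$ is symmetric, the two terms add, and one lands on an \emph{antisymmetric} $Z$. So the proof hinges on faithfully tracking that the symmetry type of $b$ converts to the opposite symmetry type of $Z$.
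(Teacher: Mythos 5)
Your proposal is correct and follows essentially the same route as the paper: the paper also treats this proposition as the $SU(\infty,\infty)$ computation of Section \ref{ssec4a} plus the translation of $b$--isotropy into a symmetry condition on the matrix of $Z$, and the explicit cross-term computation you perform (isotropy $\Leftrightarrow$ $z_{-i,\ell}=-z_{-\ell,i}$, i.e.\ antisymmetry after the $e_i \leftrightarrow e_{-i}$ reindexing) is exactly the one the paper records in Sections \ref{ssec5c}--\ref{ss6d}, where $b(z_j,z_\ell)$ collapses to two cross terms whose relative sign distinguishes the symmetric ($Sp(\infty;\R)$) from the antisymmetric ($SO^*(\infty)$) case. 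Your sign bookkeeping and the inheritance of the linear fractional action from $SO^*(\infty)\subset SU(\infty,\infty)$ match the paper's argument.
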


\begin{corollary}\label{su2so*}
The bounded symmetric domain for $SO^*(\infty)$ is a totally geodesic
submanifold of the bounded symmetric domain for $SU(\infty,\infty)$.
\end{corollary}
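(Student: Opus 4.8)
The plan is to recognize the $SO^*(\infty)$ domain as the fixed point set, inside the $SU(\infty,\infty)$ domain, of an involutive isometry, and then to deduce the totally geodesic property by reducing to the classical finite dimensional embedding $SO^*(2n)/U(n) \hookrightarrow SU(n,n)/S(U(n)\times U(n))$ through the exhaustions of Section \ref{ssec2c}. First I would observe that both domains are orbits of the \emph{same} flag $\cF = (0,F,V)$ with $F = \Span\{e_i \mid i>0\}$, and that $SO^*(\infty) = SO(\infty;\C) \cap SU(\infty,\infty)$ sits inside $SU(\infty,\infty)$, so the base point is common and $D_0^{SO^*} \subset D_0^{SU}$. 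By Propositions \ref{bded-realization-su} and \ref{bded-realization-so*}, in the bounded realization $D_0^{SU} = \{Z \mid I - Z^*Z \ggg 0\}$ while $D_0^{SO^*}$ is exactly the linear slice on which the matrix of $Z$ is antisymmetric. Hence $D_0^{SO^*} = \{Z \in D_0^{SU} \mid \mu(Z)=Z\}$ for the map $\mu : Z \mapsto -{}^tZ$, where the transpose uses the identification $i \leftrightarrow -i$ of the standard bases of $V_-$ and $V_+$.

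Next I would check that $\mu$ is an involutive holomorphic isometry of $D_0^{SU}$. It is $\C$--linear in the coordinate $Z$, hence holomorphic; it is clearly involutive; and it carries $D_0^{SU}$ onto itself because transposition preserves singular values and therefore preserves the condition $I - Z^*Z \ggg 0$. On each finite dimensional stage of the exhaustion the invariant metric is the Bergman metric, and every biholomorphism of a bounded domain preserves the Bergman metric, so $\mu$ restricts to an isometry at every stage. The connected fixed point set of an involutive isometry through a fixed point is totally geodesic, which at each finite stage is the classical fact (\cite{W1972}), and $D_0^{SO^*}$ is the direct limit of the fixed point sets $SO^*(2n)/U(n)$; the totally geodesic property then passes to the limit. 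Equivalently, and avoiding the metric entirely, one checks that $\gso(\infty;\C)\cap\gsu(\infty,\infty) = \gso^*(\infty)$ is $\theta$--stable for the Cartan involution $\theta$ of $\gsu(\infty,\infty)$ defining $S(U(\infty)\times U(\infty))$, so that $\gp := \gso^*(\infty)\cap\widetilde{\gp}$ is a Lie triple system in $\widetilde{\gp}$ and $\exp(\gp)\cF = D_0^{SO^*}$ is totally geodesic.

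The one point requiring care is the passage to the direct limit: one must verify that the inclusion $D_0^{SO^*}\hookrightarrow D_0^{SU}$ is compatible with the exhaustions $V = \varinjlim V_n$, so that it is genuinely the limit of the finite dimensional totally geodesic embeddings $SO^*(2n)/U(n)\hookrightarrow SU(n,n)/S(U(n)\times U(n))$, and that the ambient invariant metric restricted to the slice agrees, up to the fixed normalization, with the intrinsic invariant metric of the $SO^*$ domain. Both follow from the facts that the two domains share the flag $\cF$ and that $SO^*(2n) = SO(2n;\C)\cap SU(n,n)$ is exhausted compatibly inside $SU(n,n)$; but pinning down this compatibility is precisely what is needed to make ``totally geodesic'' meaningful in the ind--manifold setting, and so is the main obstacle. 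Since this is the identical bookkeeping already carried out for Corollary \ref{su2sp}, where the slice consists of the symmetric rather than the antisymmetric matrices, the argument goes through \emph{mutatis mutandis}.
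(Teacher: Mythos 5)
Your proposal is correct and follows essentially the same route as the paper, which states the corollary as an immediate consequence of Proposition \ref{bded-realization-so*}: the bounded realization exhibits the $SO^*(\infty)$ domain as the antisymmetric-matrix slice $\{Z \mid Z + {}^t Z = 0,\ I - Z^*Z \ggg 0\}$ of the $SU(\infty,\infty)$ domain, with the totally geodesic property being the classical fixed-point-of-an-involution fact carried through the compatible exhaustions, exactly as for Corollary \ref{su2sp}. Your explicit involution $\mu(Z) = -{}^t Z$ and the alternative Lie-triple-system formulation are just careful spellings-out of what the paper leaves implicit, and both are sound.
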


As in Section \ref{ssec4b} for $Sp(\infty;\R)$, the $G_0$--orbit 
of signature $(pos,neg,nul)=(a,b,c)$, $a$ and $c$ finite, is
$$
\begin{aligned}
D_{a,b,c} &= G_0(0,(F_+ + F_- + F_0),V) \text{ where }\\
        &F_0 = \Span\{(e_{-c}+e_c), \dots ,(e_{-1}+e_1)\} \text{ ($h$--null),}\\
        &F_+ = \Span\{e_{-c-a}, \dots , e_{-c-1}\} \text{ ($h$--positive definite),}\\
        &F_- = \Span\{e_{c+1},e_{c+2}, \dots\} \text{ ($h$--negative definite)},
\end{aligned}
$$
and every $G_0$--orbit on $\cZ_{\cF,E}$ is one of those $D_{a,b,c}$.  We 
always have $b = \dim F_- = \infty$.  The open orbits are the case $c = 0$ 
mentioned above: $D_k = D_{k,b,0}$\,.  

\subsection{The Complex Bounded Symmetric Domain for $SO(\infty,2)$.}
\label{ssec4d}
\setcounter{equation}{0}
This one is more delicate because the bounded domain for $SO(\infty,2)$
does not sit as an easily described totally geodesic submanifold of the
bounded domain for any of the $SU(\infty,q)$.  Specifically, it is a 
bounded domain in a nondegenerate complex quadric in an infinite dimensional 
complex projective space.
\smallskip

We use a basis 
\begin{equation}\label{so2-basis}
E =\{\dots , e_{-3}, e_{-2}, e_{-1}, e_1, e_2\}
\end{equation}  
of $V$.  $G_0 = SO(\infty,2) = SO(\infty;\C)\cap U(\infty,2)$ is the 
connected real semisimple Lie group defined by
the following hermitian form $h$ and the symmetric bilinear form $b$:
\begin{equation}\label{so2-forms}
h(e_i,e_j) = \delta_{i,j} \text{ for } i < 0,\, 
	h(e_i,e_j) = -\delta_{i,j} \text{ for } i > 0,\, 
		b(e_i,e_j) = \delta_{i,j} \text{ for all } i,j.
\end{equation}
This is a finitary change from (\ref{so-basis}) and (\ref{forms-so}).
The only effect of the change 
is to facilitate our study of bounded domains and cycle spaces
for $SO(\infty,2)$.
\smallskip

For $n > 0$ we have $E_n = \{e_{-n}, e_{-n+1}, \dots , e_{-1}, e_1, e_2\}$,
the $(n+2)$--dimensional subspace $V_n = \Span(E_n)$ of $V$, the 
$(n+1)$--dimensional complex projective space $\cP^{n+1} = \cP(V_n)$, and the 
nondegenerate complex quadric $\cZ_n = \{[v] \in \cP^{n+1} \mid b(v,v) = 0\}$.
They define the infinite dimensional complex projective space
$\cP^\infty = \cP(V) = \varinjlim \cP^{n+1}$ and the nondegenerate complex
quadric $\cZ = \{[v] \in \cP^\infty \mid b(v,v) = 0\} = \varinjlim \cZ_n$
in $\cP^\infty$.  Note that everything here is finitary.  The complex
group $G = SO(\infty;\C)$ is transitive on $\cZ$ because $SO(n+2;\C)$ is
transitive on $\cZ_n$\,.
\smallskip

Our bounded symmetric domain will be $D_0 = G_0(z_0) \subset \cZ$ where
$z_0 = [e_1 + \sqrt{-1}\,e_2]$.  We now look at the Harish-Chandra
embedding of $D_0$ in its holomorphic tangent space.  The Lie algebra
$$
\gg = \left \{ \left . \left ( \begin{smallmatrix} A & B \\ -\tr B & D 
	\end{smallmatrix} \right ) \right |\, \tr A = -A,\, \tr D = -D
	\right \} \text{ where } A \in \C^{\infty \times \infty},\,
	B \in \C^{\infty \times 2},\, D \in \C^{2 \times 2} 
$$
and the isotropy subalgebra at $z_0$ is the parabolic
$$
\gp = \left \{ \left . \left ( \begin{smallmatrix} A & B \\ -\tr B & D
        \end{smallmatrix} \right ) \in \gg  \,\, \right |\, B = (B'',\sqrt{-1}\,B'')
	 \right \} \text{ where } B'' \in \C^{\infty \times 1} .
$$
The holomorphic tangent space to $\cZ$ at $z_0$ is
$$
\gm^+ = \left \{ \left . \left ( \begin{smallmatrix} 0 & B \\ -\tr B & 0
	\end{smallmatrix} \right ) \right |\, B = (\sqrt{-1}\,B'',B'')
	\text{ with } B'' \in \C^{\infty \times 1} \right \}.
$$
We view $\gm^+$ as $\C^\infty$ (column vectors) $Z$ under the correspondence
$$
Z \mapsto \widetilde{Z} := \left ( \begin{smallmatrix} 0 & Z' \\ -\tr Z' & 0
        \end{smallmatrix} \right ) \text{ where } Z' = (\sqrt{-1}\,Z,Z)
	\in \C^{\infty \times 2}.
$$
Computing as in \cite{W1972}, the composition $\xi: \C^\infty \to \gm^+ \to \cZ$
corresponding to the Harish-Chandra embedding is 
$$
\xi(Z) = (\exp(\widetilde{Z})(z_0) = \left [ \begin{smallmatrix}
	2\sqrt{-1}\, Z \\ 1 + \tr Z\, Z \\ \sqrt{-1}\,(1 - \tr Z\, Z)
	\end{smallmatrix} \right ] \in \cZ \subset \cP(V).
$$
Now $h(\xi(Z), \xi(Z)) = 
2Z^*\cdot 2Z - |1 + \tr Z\, Z|^2 - |1 - \tr Z\, Z|^2 < 0$, so
$$
h(\xi(Z),\xi(Z)) < 0 \Leftrightarrow 1 + |\tr Z\, Z|^2 -2 Z^*\, Z > 0
	\Leftrightarrow (1 - Z^*\, Z)^2 > (Z^*\, Z)^2 - |\tr Z\, Z|^2.
$$
Using $Z^*\, Z \geqq |\tr Z\, Z| \geqq 0$ we take positive square roots
to see
$$
\{Z \in \C^\infty \mid h(\xi(Z),\xi(Z)) < 0\}=D'_0 \cup D'_1 \text{ (disjoint)}
$$
where $D'_0$ is the nonempty bounded domain star shaped from $0$, 
$$
D'_0 = \{Z \in \C^\infty \mid 1 - Z^*\, Z > \bigl ( (Z^*\, Z)^2 - 
	|\tr Z\, Z|^2\bigr )^{1/2}\,\},
$$
and $D'_1$ is the nonempty unbounded domain star shaped from $\infty$,
$$
D'_1 = \{Z \in \C^\infty \mid Z^*\, Z - 1 > \bigl ( (Z^*\, Z)^2 - 
        |\tr Z\, Z|^2\bigr )^{1/2}\,\}.
$$
Using Witt's Theorem on the finite dimensional approximations,
$\xi^{-1}(D_0)$ is the topological component of 
$\{Z \in \C^\infty \mid h(\xi(Z),\xi(Z)) < 0\}$ containing $0$
so $D'_0 = \xi^{-1}(D_0)$.  We have proved
\begin{proposition}\label{realiz-so2inf}
The bounded symmetric domain $D_0$ for $SO(\infty,2)$ is given by
$$
\begin{aligned}
\xi^{-1}(D_0) &= \{Z \in \C^\infty\mid 1 - Z^*\, Z > \bigl ( (Z^*\, Z)^2 -
        |\tr Z\, Z|^2\bigr )^{1/2}\,\} \\ 
&= \{Z \in \C^\infty\mid 1 + |\tr Z\, Z|^2 -2 Z^*\, Z > 0
	\text{ and } Z^*\, Z < 1\}.
\end{aligned}
$$
\end{proposition}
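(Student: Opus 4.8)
The plan is to derive everything from the explicit Harish--Chandra embedding $\xi$, reducing the statement to one elementary inequality together with a connectedness argument. First I would pin down $\xi$ itself in each finite-dimensional approximation $\cZ_n \subset \cP(V_n)$: writing $\widetilde{Z} = \left(\begin{smallmatrix} 0 & Z' \\ -\tr Z' & 0\end{smallmatrix}\right)$ with $Z' = (\sqrt{-1}\,Z, Z)$, a direct computation of $\widetilde{Z}^k z_0$ shows $\widetilde{Z}\,z_0 = (2\sqrt{-1}\,Z;\, 0,0)$, then $\widetilde{Z}^2 z_0 = (0;\, 2\,\tr Z\,Z,\, -2\sqrt{-1}\,\tr Z\,Z)$, and $\widetilde{Z}^3 z_0 = 0$. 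Hence the exponential series terminates after the quadratic term and yields the closed form $\xi(Z) = [\,2\sqrt{-1}\,Z\,;\,1 + \tr Z\,Z\,;\,\sqrt{-1}(1 - \tr Z\,Z)\,]$. These expressions are finitary and compatible with the inclusions $V_n \hookrightarrow V_m$, so they pass to the direct limit and define $\xi$ on all of $\C^\infty$.

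Next I would compute $h(\xi(Z),\xi(Z))$ against the form (\ref{so2-forms}). Using the identity $|1 + w|^2 + |1 - w|^2 = 2 + 2|w|^2$ with $w = \tr Z\,Z$ gives $h(\xi(Z),\xi(Z)) = 4\,Z^*Z - 2 - 2|\tr Z\,Z|^2$, so the condition $h(\xi(Z),\xi(Z)) < 0$ is equivalent to $1 + |\tr Z\,Z|^2 - 2\,Z^*Z > 0$, and completing the square rewrites this as $(1 - Z^*Z)^2 > (Z^*Z)^2 - |\tr Z\,Z|^2$. The elementary inequality $Z^*Z \geqq |\tr Z\,Z|$, which is the triangle inequality $|\sum z_i^2| \leqq \sum |z_i|^2$ for the finitely supported vector $Z$, shows the right-hand side is $\geqq 0$, so I may take positive square roots and read off $|1 - Z^*Z| > \bigl((Z^*Z)^2 - |\tr Z\,Z|^2\bigr)^{1/2}$. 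This splits the sublevel set $\{h(\xi(Z),\xi(Z)) < 0\}$ into the two disjoint open pieces $D'_0$ (where $1 - Z^*Z > 0$) and $D'_1$ (where $Z^*Z - 1 > 0$), and it simultaneously yields the equivalence of the two descriptions in the statement: on $D'_0$ the pair of conditions $Z^*Z < 1$ and $1 + |\tr Z\,Z|^2 - 2\,Z^*Z > 0$ is exactly what squaring and un-squaring the first inequality produce.

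The crux is to identify $\xi^{-1}(D_0)$ with the component $D'_0$ through the origin. Since $z_0 = \xi(0)$ and $D_0 = G_0(z_0)$, the origin lies in $\xi^{-1}(D_0)$; because $z_0$ is $b$-isotropic and $h$-negative, hence nondegenerate, $D_0$ is an open orbit by Theorem \ref{open-orbit-su}, and as the orbit of the connected group $G_0$ it is connected. Thus $\xi^{-1}(D_0)$ is open and connected, so it is contained in the component $D'_0$ of $\{h(\xi(Z),\xi(Z)) < 0\}$ through $0$. For the reverse inclusion I would work in the finite-dimensional approximations: by Witt's theorem the real group $SO(n,2)$ acts transitively on each connected family of $b$-isotropic $h$-negative lines in $V_n$, so the preimage of $D_0 \cap \cZ_n$ is precisely the origin-component of $\{h(\xi(Z),\xi(Z)) < 0\}$ in $\C^n$. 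These components exhaust $D'_0$ compatibly under the maps $\cZ_n \to \cZ_m$, and passing to the direct limit gives $\xi^{-1}(D_0) = D'_0$.

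The main obstacle is exactly this last identification. The analytic half (openness, connectedness, the containment $\xi^{-1}(D_0) \subseteq D'_0$) is routine, but the reverse inclusion rests on the finite-dimensional orbit geometry: one must verify that $D'_0$ and $D'_1$ really are the two distinct open $SO(n,2)$-orbits of negative isotropic lines, a genuine orientation phenomenon for these type IV domains, and then check that the finite-dimensional orbit identifications are compatible under the embeddings $\cZ_n \hookrightarrow \cZ_m$ so that they assemble into the single ind-variety statement. Once that compatibility is in place, the star-shapedness of $D'_0$ from $0$ confirms it is a single connected component and the proof is complete.
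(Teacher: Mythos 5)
Your proposal is correct and follows essentially the same route as the paper: the closed form of $\xi$ (the exponential series terminating because $\widetilde{Z}^3 z_0 = 0$), the computation of $h(\xi(Z),\xi(Z))$ and the split of $\{h(\xi(Z),\xi(Z)) < 0\}$ into the two star-shaped pieces using $Z^*Z \geqq |\tr Z\, Z|$, and the identification $\xi^{-1}(D_0) = D'_0$ via Witt's theorem on the finite-dimensional approximations are exactly the steps of the paper's proof. Your extra detail simply fills in what the paper delegates to \cite{W1972} and to the surrounding orbit analysis (Lemma \ref{three-open}), including the orientation point that $D'_0$ and $D'_1$ are the two distinct negative-definite orbits.
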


Shortly we will see this in terms of partial Cayley transforms, but for
the moment we mention that $D'_1 = \xi^{-1}(D_1)$ where 
$z_1 = [e_1 - \sqrt{-1}\, e_2]$ and $D_1 = G_0(z_1)$ is given by
$$
\begin{aligned}
\xi^{-1}(D_1) &= \{Z \in \C^\infty\mid Z^*\, Z - 1 > \bigl ( (Z^*\, Z)^2 -
        |\tr Z\, Z|^2\bigr )^{1/2}\,\} \\
&= \{Z \in \C^\infty\mid 1 + |\tr Z\, Z|^2 -2 Z^*\, Z > 0
        \text{ and } Z^*\, Z  > 1\}.
\end{aligned}
$$

The action of $G_0$ on $D_0$ is somewhat complicated because of the
quadratic term $q: \C^\infty \to \C$ given by $q(Z) = \tr Z\, Z$.
If $Z \in \xi^{-1}(D_0)$ the $Z^*\, Z < 1$ so $|q(Z)| < 1$, and the
formula for $\xi(Z)$ says
$$
\text{if } q = q(Z) \ne 1 \text{ then } 
	\xi(Z) = (\exp(\widetilde{Z})(z_0) = \left [ \begin{smallmatrix}
	2\sqrt{-1}\,Z \\ 1 + q(Z) \\ 1 - \sqrt{-1}\,q(Z)
        \end{smallmatrix} \right ] \in \cZ \subset \cP(V).
$$
Now, by straightforward computation,
\begin{proposition}\label{action-soinf2}
The action $g(Z) = \xi^{-1}g\xi(Z)$ of $G_0$ on the open orbit $D_0$ is given by
$$
\text{if } g =  \left ( \begin{smallmatrix} A & B \\ C & D
        \end{smallmatrix} \right ) \text{ and } \left ( \begin{smallmatrix}
	Z_1 \\ Z_2 \end{smallmatrix} \right ) = \left ( \begin{smallmatrix}
	 2\sqrt{-1}\,Z \\ 1 + q(Z) \\ 1 - \sqrt{-1}\,q(Z)
	\end{smallmatrix} \right ) \text{ then } g(Z) =
	\tfrac{1}{(1,\sqrt{-1})(CZ_1 + DZ_2)} (AZ_1 + BZ_2).
$$
{\rm Here $\tfrac{1}{(1,\sqrt{-1})(CZ_1 + DZ_2)}$ is $1 \times 1$ and 
is viewed as a complex number.}
\end{proposition}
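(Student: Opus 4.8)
The plan is to use that $G_0 \subset SO(\infty;\C)$ acts on $\cZ \subset \cP(V)$ simply as the projectivization of its linear action on $V$, so that the intrinsic action $g(Z) = \xi^{-1} g\, \xi(Z)$ is nothing but ``apply the matrix $g$ to a homogeneous coordinate vector of $\xi(Z)$, then read the answer back through $\xi^{-1}$.'' First I would record the homogeneous column of $\xi(Z)$ in the block form dictated by $g = \left ( \begin{smallmatrix} A & B \\ C & D \end{smallmatrix}\right )$, namely the $\infty$--block $Z_1 = 2\sqrt{-1}\,Z$ and the $\C^2$--block $Z_2$ displayed just before the statement; this is already available from the explicit formula for $\xi(Z) = \exp(\widetilde Z)(z_0)$ derived above. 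Applying $g$ linearly produces the homogeneous vector with $\infty$--block $AZ_1 + BZ_2$ and $\C^2$--block $CZ_1 + DZ_2$, and the whole problem reduces to computing $\xi^{-1}$ of this vector.

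The crux is the recovery map $\xi^{-1}$, and here I would exploit the nilpotent structure of $\gm^+$: since $\widetilde Z^{\,3} z_0 = 0$, the exponential series terminates at second order, which is exactly why the $\C^2$--block of $\xi(W)$ is affine in the quadratic quantity $q(W) = \tr W\, W$, of the form $Z_2(W) = v_0 + q(W)\, u_0$ with $v_0, u_0 \in \C^2$ fixed, while the $\infty$--block is precisely $2\sqrt{-1}\,W$. Consequently there is a fixed linear functional on the $\C^2$--block --- the row vector appearing in the statement --- that annihilates the $q$--direction $u_0$; evaluated on $Z_2(W)$ it returns a nonzero constant, and evaluated on any rescaling of the coordinate vector it scales by the same projective factor. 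Dividing the $\infty$--block by this functional applied to the $\C^2$--block therefore cancels the projective scale and the quadratic term at once and returns $W$ itself. Performing this on the vector $\left ( \begin{smallmatrix} AZ_1 + BZ_2 \\ CZ_1 + DZ_2 \end{smallmatrix}\right )$ yields $g(Z) = \bigl( (1,\sqrt{-1})(CZ_1 + DZ_2)\bigr)^{-1}(AZ_1 + BZ_2)$, which is the asserted formula.

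The delicate part, and what I expect to be the main obstacle, is precisely this normalization by the quadratic term. Unlike the $SU$, $Sp$ and $SO^*$ cases of Sections \ref{ssec4a}--\ref{ssec4c}, where the action is an honest linear--fractional transformation, here the map $q$ obstructs a naive reading and all the content lies in checking that the single row in the statement kills exactly the $u_0$ produced by $\xi$; this must be verified against the explicit $v_0$ and $u_0$ read off from the formula for $\xi$. In addition one must confirm that $g\,\xi(Z)$ still lies in the affine chart $\xi(\C^\infty)$ --- equivalently that the denominator $(1,\sqrt{-1})(CZ_1 + DZ_2)$ is nonzero --- so that the division is legitimate; this holds because for $g$ in the open orbit $g(Z)$ remains in $D_0$ (by the Witt's theorem component argument underlying Proposition \ref{realiz-so2inf}) and $\xi$ is biholomorphic onto its image there. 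Granting these two points, the remainder is the routine block multiplication indicated above.
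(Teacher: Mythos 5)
Your strategy is the same as the paper's: the paper's entire proof is the phrase ``by straightforward computation,'' and the computation it intends is precisely your outline --- $G_0$ acts on $\cZ \subset \cP(V)$ by projectivized matrix multiplication, so one applies $g$ to the homogeneous column of $\xi(Z)$ in blocks and then inverts $\xi$ by a fixed linear functional on the $\C^2$--block; your supporting points (termination of the exponential series because $\widetilde{Z}^{\,3}z_0 = 0$, and nonvanishing of the denominator because $g\,\xi(Z)$ stays in $D_0 \subset \xi(\C^\infty)$) are fine. The gap is that the one verification you defer --- ``checking that the single row in the statement kills exactly the $u_0$ produced by $\xi$'' --- is where all the content lies, and it fails as stated. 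From the paper's (correct, earlier) formula $\xi(Z) = [\,2\sqrt{-1}\,Z;\ 1+q(Z);\ \sqrt{-1}(1-q(Z))\,]$ one reads off $v_0 = \tr(1,\sqrt{-1})$ and $u_0 = \tr(1,-\sqrt{-1})$, and the row $(1,\sqrt{-1})$ of the statement annihilates $v_0$, not $u_0$: indeed $(1,\sqrt{-1})\bigl(v_0 + q\,u_0\bigr) = 2q$, so the displayed recipe returns $\sqrt{-1}\,W/q(W)$ rather than $W = g(Z)$. The functional that does what you describe is $(\sqrt{-1},1) = \sqrt{-1}\,(1,-\sqrt{-1})$, which kills $u_0$ and takes the value $2\sqrt{-1}$ on $v_0$; executing your own plan therefore proves the proposition with denominator $(\sqrt{-1},1)(CZ_1 + DZ_2)$ and shows the printed row vector is a misprint (of a piece with the adjacent misprint $1 - \sqrt{-1}\,q(Z)$ for $\sqrt{-1}\,(1 - q(Z))$ in the restatement of $\xi$).

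A secondary imprecision: even granting a functional $\ell$ with $\ell(u_0) = 0$ and $\ell(v_0) \ne 0$, dividing the $\infty$--block by $\ell$ of the $\C^2$--block returns $W$ itself, and not merely a nonzero multiple of it, only if $\ell(v_0)$ equals the coefficient $2\sqrt{-1}$ carried by the $\infty$--block $2\sqrt{-1}\,W$; your text jumps from ``returns a nonzero constant'' to ``returns $W$ itself'' without imposing this normalization (which $(\sqrt{-1},1)$ does satisfy). In sum: right method, right reduction, correct treatment of the chart and denominator issues, but the single short computation on which the proof rests is left undone, and performing it contradicts the literal statement and forces the correction of the row vector just described.
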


Express $V = V_+ \oplus V_-$ where $V_+ = \Span\{e_i \mid i < 0\}$ and
$V_- = \Span\{e_1,e_2\}$.  Then $h(V_+,V_-) = 0 = b(V_+,V_-)$.  
Let $[v] \in \cZ \subset \cP(V)$ with $G_0([v])$ open in $\cZ$, in other
words with $h(v,v) \ne 0$.  If $\pi_-(v) = a(e_1 + \sqrt{-1}\,e_2) +
b(e_1 - \sqrt{-1}\,e_2)$ then $0 = b(v,v) = 2ab$.  Replacing $v$ within $[v]$
now the only possibilities are (i) $\pi_-(v) = (e_1 + \sqrt{-1}\,e_2)$, 
(ii) $\pi_-(v) = (e_1 - \sqrt{-1}\,e_2)$ and (iii) $v \in V_+$\,.
The domains $D_0 = G_0([e_1 + \sqrt{-1}\,e_2])$ and 
$D_1 = G_0([e_1 - \sqrt{-1}\,e_2])$, so the possibilities (i) and (ii)
correspond to $D_0$ and $D_1$\,.  They are equivalent under complex conjugation
and each has signature $(0,1,0)$.
See Remark \ref{orientation}.  The bounded symmetric domains $D_0$ 
and $D_1$ are of tube type.  
\smallskip

The $G_0$--stabilizer of $V_+$\,, which
is $SO(\infty)\times SO(2)$, is transitive on the projective light cone 
in $V_+$\,; thus the possibility (iii) corresponds to the domain
$D_2 = G_0([e_{-1} + \sqrt{-1}\,e_{-2}])$, signature $(1,0,0)$.  This
completes the verification of
\begin{lemma}\label{three-open}
There are three open orbits for the action of $SO(\infty,2)$ on $\cZ$:
the two $h$--negative definite orbits $D_0 = G_0([e_1 + \sqrt{-1}\,e_2])$
and $D_1 = G_0([e_1 - \sqrt{-1}\,e_2])$, and the $h$--positive definite
orbit $D_2 = G_0([e_{-1} + \sqrt{-1}\,e_{-2}])$.
\end{lemma}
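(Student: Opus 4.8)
The plan is to classify the open orbits by the sign of the hermitian form on the line and then to reduce each type to a normal form inside $V_- = \Span\{e_1,e_2\}$ or $V_+ = \Span\{e_i \mid i<0\}$. Since the point $\cF^{(1)} = (0,\langle v\rangle,V)$ is the one--step flag determined by a single line, Theorem \ref{open-orbit-su} says that $G_0([v])$ is open in $\cZ$ exactly when $\langle v\rangle$ is $h$--nondegenerate, i.e. when $h(v,v)\neq 0$. As $h(\lambda v,\lambda v)=|\lambda|^2 h(v,v)$, the sign of $h(v,v)$ is a well defined function on the open locus, and because every $g\in G_0$ preserves $h$ it is a $G_0$--invariant. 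Thus the open locus splits as a disjoint $G_0$--stable union of an $h$--negative part $\cZ^-$ and an $h$--positive part $\cZ^+$, and it remains to count orbits in each. Here I would use that $h$ is negative definite on $V_-$, positive definite on $V_+$, and $h(V_+,V_-)=0=b(V_+,V_-)$, so that $h(v,v)=h(\pi_+v,\pi_+v)+h(\pi_-v,\pi_-v)$ is a sum of terms of opposite sign.

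The three listed representatives lie on the quadric and carry the asserted signs: $b(e_1\pm\sqrt{-1}\,e_2,e_1\pm\sqrt{-1}\,e_2)=0$ with $h=-2<0$, while $b(e_{-1}+\sqrt{-1}\,e_{-2},e_{-1}+\sqrt{-1}\,e_{-2})=0$ with $h=+2>0$. Hence $D_2$ is separated from $D_0,D_1$ by the sign invariant, and it suffices to prove (a) $\cZ^+$ is the single orbit $D_2$, (b) $\cZ^-=D_0\cup D_1$, and (c) $D_0\neq D_1$. For (a) I would show that every $h$--positive $[v]$ is $G_0$--equivalent to a vector in $V_+$; then $v$ is a $b$--isotropic vector of the positive part, and the stabilizer $SO(\infty)\times SO(2)$ of the decomposition $V=V_+\oplus V_-$ acts on $V_+$ as the real orthogonal group, which is transitive on the projective light cone of $(V_+,b|_{V_+})$, so $\cZ^+$ is a single orbit $D_2$. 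For (b) I would likewise move an $h$--negative $[v]$ so that $\pi_+(v)=0$, so that $v\in V_-$ is a $b$--isotropic negative vector in a two--dimensional space, forcing $[v]\in\{[e_1+\sqrt{-1}\,e_2],[e_1-\sqrt{-1}\,e_2]\}$ and hence $[v]\in D_0\cup D_1$.

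For the distinctness (c) I would avoid any orientation argument on $V_-$ (a general $g\in G_0$ does not preserve $V_-$) and instead exploit the Harish--Chandra picture already established around Proposition \ref{realiz-so2inf}. There the $h$--negative locus meets the affine chart $\xi(\C^\infty)$ in the two \emph{disjoint} nonempty star--shaped pieces $D_0'$ and $D_1'$, and one has $\xi^{-1}(D_0)=D_0'$ together with $\xi^{-1}(D_1)=D_1'$. If $D_0=D_1$ were a single orbit, then $D_0'=\xi^{-1}(D_0)=\xi^{-1}(D_1)=D_1'$, contradicting $D_0'\cap D_1'=\emptyset$ with both nonempty (indeed $0\in D_0'$, since $\xi(0)=z_0$). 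Therefore $D_0\neq D_1$, which is the manifestation in this setting of the orientation phenomenon of Remark \ref{orientation}; combined with $D_2$ this yields exactly three open orbits.

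I expect the crux to be the normal--form reductions used in (a) and (b): moving $[v]$ by $G_0$ so that its component in the ``wrong'' summand vanishes, while respecting \emph{both} $b$ and $h$ simultaneously. A Witt extension theorem for the single complex form $b$ only delivers $SO(\infty;\C)$--equivalence, so the real point is to perform the reduction inside the real form $G_0=SO(\infty;\C)\cap U(\infty,2)$, where the $h$--signature of the line is precisely the invariant that forces the threefold splitting. I would make this rigorous finite--dimensionally: since $[v]$ is weakly compatible with $E$ it lies in some $\cZ_n=\{[w]\in\cP(V_n)\mid b(w,w)=0\}$, I would invoke the classical classification of $SO_0(n,2)$--orbits on this quadric (for large $n$ there are exactly three open orbits, namely the type IV bounded domain, its conjugate, and the one $h$--positive orbit; cf. \cite{W1969}), check that the inclusions $\cZ_n\hookrightarrow\cZ_{n+1}$ match the three representatives, and pass to $\cZ=\varinjlim\cZ_n$. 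Reconciling this direct--limit compatibility with the affine Harish--Chandra chart is the step I would watch most carefully.
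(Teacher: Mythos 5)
Your argument is correct, and its overall shape matches the paper's: the same decomposition $V=V_+\oplus V_-$, the sign of $h(v,v)$ as the $G_0$--invariant separating $D_2$ from $D_0\cup D_1$, and the same treatment of the positive case via transitivity of the $G_0$--stabilizer $SO(\infty)\times SO(2)$ of $V_+$ on the projective light cone in $V_+$\,. The genuine difference lies in how the normal--form reduction and the distinctness $D_0\ne D_1$ are certified. The paper argues pointwise and tersely: writing $\pi_-(v)=a(e_1+\sqrt{-1}\,e_2)+c(e_1-\sqrt{-1}\,e_2)$, it deduces $ac=0$ from $b(v,v)=0$ and rescales to reach the three representatives; but in general $b(v,v)=b(\pi_+v,\pi_+v)+4ac$, so that step silently assumes $\pi_+(v)$ is $b$--isotropic (for instance $v=3\sqrt{-1}\,e_{-1}+\sqrt{5}\,e_{-2}+2e_1$ lies on the quadric with $h(v,v)>0$ and $ac\ne 0$), and reaching the listed base points genuinely requires moving $v$ by the group rather than rescaling within $[v]$. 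So your instinct that this reduction is the crux is right, and your mechanism --- the classical classification of the open $SO_0(n,2)$--orbits on the finite quadrics $\cZ_n$ (take $n\geqq 3$, so the positive locus is a single orbit), with representatives that are literally the same three points at every level, followed by the passage to $\cZ=\varinjlim\cZ_n$ --- is a rigorous completion fully in the spirit of the paper's other proofs (Theorem \ref{open-orbit-su} and Proposition \ref{realiz-so2inf} are proved by exactly such finite--dimensional approximation, the latter invoking Witt's theorem). For distinctness, the paper points to conjugation--equivalence and the orientation phenomenon of Remark \ref{orientation}, whereas you use $\xi^{-1}(D_0)=D'_0$, $\xi^{-1}(D_1)=D'_1$ and $D'_0\cap D'_1=\emptyset$; this is non--circular, since those identifications are established before the lemma, and it buys a concrete separation of the two negative orbits inside the Harish--Chandra chart. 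One detail worth writing out in your limit step: a putative equivalence $gz_0=z_1$ with $g\in G_0$ would, since $g$ stabilizes some $V_m$ and fixes a complementary subspace pointwise, descend to an $SO_0(m,2)$--equivalence on $\cZ_m$, contradicting the level--$m$ classification --- so distinctness also survives the limit by your finite--dimensional route alone, independently of the chart argument.
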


Now we do this more carefully with the partial Cayley transforms.  
Each $c_i(e_j) = e_j$ for $j \notin \{-2,-1,1,2\}$.  In the 
basis $\{e_{-2},e_{-1},e_1,e_2\}$ of
$\Span\{e_{-2},e_{-1},e_1,e_2\}$, the $c_i$ have matrices
\begin{equation}\label{cayleysoinf2}
c_1 = \tfrac{1}{\sqrt{2}} \left ( \begin{smallmatrix}  1 &  0 &  1 &  0 \\
                                                       0 &  1 &  0 &  1 \\
                                                      -1 &  0 &  1 &  0 \\
                                                       0 & -1 &  0 &  1
                          \end{smallmatrix} \right ) \text{ and }
c_2 = \tfrac{1}{\sqrt{2}} \left ( \begin{smallmatrix}  1 &  0 &  1 &  0 \\
                                                       0 &  1 &  0 & -1 \\
                                                      -1 &  0 &  1 &  0 \\
                                                       0 &  1 &  0 &  1
                          \end{smallmatrix} \right ).
\end{equation}
Thus there are six $G_0$--orbits on $\cZ$.  Their base points are
\begin{equation} \label{so2-orbits}
\begin{aligned}
z_{0,0} &= z_0 &&= [e_1 + \sqrt{-1}\, e_2]                &\text{ (negative)}\\
z_{0,1} &= c_1^2z_0 &&= [e_{-2} + \sqrt{-1}\, e_{-1}]     &\text{ (positive)}\\
z_{0,2} &= c_1^2c_2^2z_0 &&= [e_1 - \sqrt{-1}\, e_2]      &\text{ (negative)}\\
z_{1,1} &= c_1z_0 &&= [e_{-2} + \sqrt{-1}\, e_{-1} + e_1 + \sqrt{-1}\, e_2]
                                                         &\text{ (isotropic)}\\
z_{1,2} &= c_1c_2^2z_0 &&= [e_{-2}-\sqrt{-1}\, e_{-1} - e_1 + \sqrt{-1}\, e_2] 
                                                         &\text{ (isotropic)}\\
z_{2,2} &= c_1c_2z_0 &&= [e_{-2} + \sqrt{-1}\, e_2]       &\text{ (isotropic)}
\end{aligned}
\end{equation}
That gives 3 open orbits $D_0 = G_0(z_{0,0})$, $D_2 = G_0(z_{0,1})$ and
$D_1 = G_0(z_{0,2})$; it gives two intermediate orbits $G_0(z_{1,1})$
and $G_0(z_{1,2})$; and it gives one closed orbit $G_0(z_{2,2})$.

\subsection{Bounded Symmetric Domains for $SL(\infty;\mathbb{R})$ 
and $SL(\infty;\mathbb{H})$.}
\label{ssec4e}
\setcounter{equation}{0}

There are no complex bounded symmetric domains for these $SL(m;\F)$, 
$m < \infty$, except the unit disk in $\C$, corresponding to 
$SL(2;\R) \cong SU(1,1) \cong SL(1;\H)$.  In particular there is no 
complex bounded symmetric domain for $SL(\infty;\R)$,
and there is none for $SL(\infty;\H)$.

\section{Cycles and Cycle Spaces}\label{sec5}
\setcounter{equation}{0}

In the finite dimensional setting, where $D$ is an open $G_0$--orbit (flag
domain) in $\cZ = G/Q$, a maximal compact subgroup $K_0 \subset G_0$ has
just one orbit $Y$ on $D$ that is a complex submanifold \cite{W1969}.  The
$G$--translates of $Y$ that are contained in $D$ form the {\em cycle space}
$\cM_D$\,.  That cycle space is sometimes called the {\em universal domain} or
{\em crown} of the flag domain.  It has many uses in harmonic analysis and 
algebraic geometry; see \cite{FHW2005}.  It also has remarkable 
complex--geometric and function--analytic
properties; for example it is a contractible Stein manifold, it has an 
explicit geometric description, and it is the key ingredient for the
double fibration transform of which one special case is the Penrose
Transform.  Here we extend some of the basic results
on cycle spaces to an infinite dimensional setting.

\subsection{Basic Results.}\label{ssec5a}
\setcounter{equation}{0}

We fix an open $G_0$--orbit $D$ in the complex flag manifold $\cZ_{E,\cF}
\cong G/Q$ with $E$ as described in Section \ref{sec3} and $\cF$
compatible with $E$.  (The results of Section \ref{ssec3g} show when
there are open $G_0$--orbits in $\cZ_{E,\cF}$.) Let $K_0$ be a maximal 
lim--compact subgroup of $G_0$ and let $K \subset G$ be its 
complexification.  As in the
finite dimensional case \cite[Theorem 4.3.1]{FHW2005},

\begin{theorem}\label{base-cycle}
There is a unique orbit $K_0(z) \subset D$ such that
$K_0(z)$ is a complex submanifold of the flag manifold $\cZ_{\cF,E}$\,.
Further, $K \cap Q_z$ is a parabolic subgroup of $K$ and
$K_0(z) = K(z) \cong K/(K \cap Q_z)$, so
$K_0(z)$ is a complex flag manifold.
\smallskip

If $C \subset D$ is a lim--compact complex submanifold then the following are 
equivalent: {\rm (i)} $C$ is a $K_0$--orbit, {\rm (ii)} $C$ is a $K$--orbit,
and {\rm (iii)} $C = K_0(z)$.
\end{theorem}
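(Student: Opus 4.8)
The plan is to reduce everything to the finite-dimensional base cycle theorem \cite[Theorem 4.3.1]{FHW2005} through the exhaustion $\cZ_{\cF,E} = \varinjlim \cZ_{\cF_n,E_n}$ of Section \ref{ssec2c}. I would first fix an exhaustion $V = \varinjlim V_n$ adapted both to the Cartan involution of $G_0$ and to the forms defining $G_0$, so that $G = \varinjlim G_n$, $G_0 = \varinjlim G_{n,0}$, $K_0 = \varinjlim K_{n,0}$, $K = \varinjlim K_n$, with $K_{n,0}$ maximal compact in $G_{n,0}$ and $K_n$ its complexification. Writing $\cZ_n = \cZ_{\cF_n,E_n} = G_n/Q_n$ and fixing a base point $z \in D$, Theorem \ref{open-orbit-su} (or Theorem \ref{sl-extremes} in the $SL$ cases) shows that $D_n := G_{n,0}(z)$ is open in $\cZ_n$ for all large $n$ and that $D = \varinjlim D_n$.

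\emph{Existence.} At each level \cite[Theorem 4.3.1]{FHW2005} provides a unique $K_{n,0}$-orbit $C_n \subset D_n$ that is a compact complex submanifold, with $C_n = K_n(z_n) \cong K_n/(K_n \cap Q_{z_n})$ and $K_n \cap Q_{z_n}$ parabolic in $K_n$. I would choose the base points compatibly, so that the equivariant structure maps $\cZ_n \to \cZ_{n+1}$ carry $C_n$ into $C_{n+1}$; here the intrinsic characterization of $C_n$ as the unique compact $K_{n,0}$-orbit in $D_n$ is what makes the base cycles nest. Then $C := \varinjlim C_n$ is a lim--compact complex ind-submanifold of $\cZ_{\cF,E}$ lying in $D$, and since $C_n = K_{n,0}(z) = K_n(z)$ at every level we get $C = K_0(z) = K(z)$, a single orbit for both $K_0$ and $K$. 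The inclusions $K_n \cap Q_z = (K_{n+1} \cap Q_z) \cap K_n$ are compatible, so $K \cap Q_z = \varinjlim(K_n \cap Q_z)$ is parabolic in $K$ in the ind-group sense of \cite{DP2004}, whence $C \cong K/(K \cap Q_z)$ is a complex flag ind-manifold.

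\emph{Uniqueness and equivalences.} Let $C' \subset D$ be a lim--compact complex submanifold that is a $K_0$-orbit or a $K$-orbit. After passing to a cofinal subsystem I would align the defining exhaustion of $C'$ with $\{\cZ_n\}$, so that $C' = \varinjlim(C' \cap \cZ_n)$ with each $C' \cap \cZ_n$ a compact complex submanifold of $D_n$; moreover $C' \cap \cZ_n$ is a single $K_{n,0}$-orbit (resp. $K_n$-orbit), since intersecting the $K_0$- (resp. $K$-) orbit through a point with $\cZ_n$ returns the orbit of the smaller group through that point. The finite-dimensional uniqueness then forces $C' \cap \cZ_n = C_n$ for all large $n$, hence $C' = C = K_0(z)$. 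This yields the uniqueness in the first assertion together with the implications (i)$\Rightarrow$(iii) and (ii)$\Rightarrow$(iii); the converses (iii)$\Rightarrow$(i) and (iii)$\Rightarrow$(ii) are immediate from $C = K_0(z) = K(z)$ established above.

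\emph{Main obstacle.} The delicate point is the alignment of exhaustions together with the orbit-intersection identity $K_0(z) \cap \cZ_n = K_{n,0}(z)$ and its $K$-analog: one must choose $\{V_n\}$ adapted to the Cartan decomposition so that $K_{n,0}$ controls the stabilizer at each level, and present the a priori abstract ind-submanifold $C'$ over the same cofinal system. Once this compatibility is secured, \cite[Theorem 4.3.1]{FHW2005} applies verbatim level by level and the direct-limit passage is routine.
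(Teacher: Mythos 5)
Your overall strategy --- exhaust $\cZ_{\cF,E}$ by the finite-dimensional flag manifolds $\cZ_{\cF_n,E_n}$ and apply \cite[Theorem 4.3.1]{FHW2005} level by level --- is exactly the paper's, but there is a genuine gap at the one point where the paper does real work: the nesting of the base cycles. You justify $C_n \subset C_{n+1}$ by appeal to ``the intrinsic characterization of $C_n$ as the unique compact $K_{n,0}$-orbit in $D_n$,'' but the uniqueness statement of \cite[Theorem 4.3.1]{FHW2005} at level $n+1$ quantifies over $K_{n+1,0}$-orbits, whereas the image of $C_n$ in $\cZ_{n+1}$ is merely a compact complex $K_{n,0}$-orbit --- typically a proper submanifold of lower dimension --- so uniqueness gives you nothing about its position relative to $C_{n+1}$. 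What you actually need is a single base point lying on every level-$n$ cycle simultaneously, i.e.\ that $K_{n,0}(z_n)$ is \emph{complex} for all $n$ at one compatible choice of $z$; equivalently, that the isotropy subgroup at $z$ contains, at every level compatibly, a fundamental Cartan subgroup adapted to $K_{n,0}$. Your ``main obstacle'' paragraph names this compatibility but offers no mechanism for producing it, and that is precisely where the content of the theorem lives.

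The paper resolves it in the opposite order from your plan. Rather than fixing $K_0$ and adapting the exhaustion, it uses the base flag $\cF$ itself as the common base point: since each orbit $G_{\ell,0}(\cF \cap V_\ell)$ is open, \cite[Theorem 2.12]{W1969} shows the isotropy $Q_\ell$ contains a fundamental Cartan subgroup $T_{\ell,0}$ of $G_{\ell,0}$; by conjugacy of fundamental Cartans these can be chosen nested, $T_{k,0} \subset T_{\ell,0}$ for $k \leqq \ell$; each $T_{\ell,0}$ then determines a maximal compact $K_{\ell,0} \subset G_{\ell,0}$ (with $T_{\ell,0} \cap K_{\ell,0}$ a compact Cartan of $K_{\ell,0}$), and for this choice the level-$\ell$ base cycle is exactly $K_{\ell,0}(\cF \cap V_\ell)$, so all cycles pass through $\cF$ and nesting is automatic. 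This proves the theorem for the particular $K_0 = \varinjlim K_{\ell,0}$ so constructed, and the case of an arbitrary maximal lim-compact subgroup then follows from conjugacy of such subgroups --- a step absent from your writeup, which your unproven ``adapted exhaustion'' assumption silently replaces. A secondary soft spot: your orbit-intersection identity $K_0(z) \cap \cZ_n = K_{n,0}(z)$ in the uniqueness argument is asserted, not proved, and while plausible for these classical limits it deserves justification (the paper's formulation via the common base point and the direct system of \cite[Section 6]{DP2004} sidesteps the need for it in this raw form).
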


\begin{proof}  The idea is to use the bases of Section \ref{sec3} together
with the results of \cite[Section 6]{DP2004} in order to take a direct limit
using the finite dimensional flag domain result of 
\cite[Theorem 4.3.1]{FHW2005}.
\smallskip

We run through the cases of Section \ref{sec3}.  In each case, the basis
$E$ of $V$  is a disjoint union of finite sets $E_\ell$ where (i) if
there is a hermitian form $h$ then the $\Span\{E_\ell\}$ are $h$--nondegenerate 
and mutually $h$--orthogonal, (ii) if there is a bilinear form $b$ then the 
$\Span\{E_\ell\}$ are $b$--nondegenerate and mutually $b$--orthogonal as 
well.  Further, we may assume that $\ell$ runs over the positive integers, 
\smallskip

Denote $\widetilde{E}_\ell = \bigcup_{k < \ell} E_k$ and
$V_\ell = \Span\{\widetilde{E}_\ell\}$.  In view of (i) and (ii) just
above, $G_0 = \varinjlim G_{\ell,0}$ where
$G_{\ell,0}$ = $\{ g \in G_0 \mid gV_\ell = V_\ell\}|_{V_\ell}$ is a 
finite dimensional real simple Lie group,
real form of the finite dimensional complex simple Lie group $G_\ell =
\{ g \in G \mid gV_\ell = V_\ell\}|_{V_\ell}$\,.  Further, 
$Q = \varinjlim Q_\ell$ where $Q_\ell$ is the
$G_\ell$--stabilizer of $\cF$\,.
\smallskip

We need a result of Dimitrov and Penkov
\cite[Proposition 6.1]{DP2004}.  They assume that $Q$
contains a splitting Cartan subgroup of $G$, but the argument is valid, as
in our case, when each $Q_\ell$ contains a splitting Cartan subgroup $H_\ell$
of $G_\ell$ with $H_\ell \subset H_m$ for $\ell \leqq m$\,.  Denote
$\widetilde{E}_\ell = \bigcup_{k \leqq \ell} E_k$\,.  Then
$\cZ_{\cF,E} = \varinjlim  \cZ_{\cF\cap V_\ell,\widetilde{E}_\ell}$ where 
we either eliminate or ignore repetitions in the $\cF\cap V_\ell$\,.
\smallskip

Since $D$ is open in $\cZ_{\cF,E}$\,, the flag $\cF$ is nondegenerate in $V$,
so by construction of the $\widetilde{E}_\ell$ each flag $\cF\cap V_\ell$
is nondegenerate in $V_\ell$\,.  Thus $G_{\ell,0}(\cF\cap V_\ell)$ is open
in $\cZ_{\cF\cap V_\ell,\widetilde{E}_\ell}$ for each $\ell$.  It follows
\cite[Theorem 2.12]{W1969} that, for each $\ell$, 
$Q_\ell$ contains a fundamental Cartan subgroup $T_{\ell,0}$ of $G_{\ell,0}$.
Any two fundamental Cartan subgroups of $G_{\ell,0}$ are conjugate, and
if $k \leqq \ell$ then any fundamental Cartan subgroup of $G_{k,0}$ is
contained in a fundamental Cartan subgroup of $G_{\ell,0}$.  Thus we may 
assume $T_{k,0} \subset T_{\ell,0}$ for $k \leqq \ell$.  
\smallskip

The fundamental Cartan $T_{\ell,0}$ determines a maximal compact subgroup
$K_{\ell,0}$ of $G_{\ell,0}$ such that $T_{\ell,0} \cap K_{\ell,0}$ is a
compact Cartan subgroup of $K_{\ell,0}$\,.  Let $K_\ell$ denote the
complexification of $K_{\ell,0}$\,.  Now $K_{k,0} \subset K_{\ell,0}$ 
and $K_k \subset K_\ell$ for $k \leqq \ell$.  Following 
\cite[Theorem 4.3.1]{FHW2005}, 
$K_{\ell,0}(\cF\cap V_\ell)$ is the unique $K_{\ell,0}$--orbit in
$G_{\ell,0}(\cF\cap V_\ell)$ that is a complex submanifold of 
$\cZ_{\cF\cap V_\ell,\widetilde{E}_\ell}$\,, and 
$K_{\ell,0}(\cF\cap V_\ell) = K_\ell(\cF\cap V_\ell)$.  
\smallskip

Suppose  for the moment that 
$K_0 = \varinjlim K_{\ell,0}$\,.  Then $K_0(\cF)$ is the unique $K_0$--orbit
in $D$ that is a complex submanifold of $\cZ_{\cF,E}$ and $K_0(\cF) = K(\cF)$.
Theorem \ref{base-cycle} follows for this particular maximal lim--compact
subgroup $K_0$ in $G_0$.  But any two maximal lim--compact subgroups of $G_0$
are conjugate, so Theorem \ref{base-cycle} follows for every choice of $K_0$\,.
\end{proof}

Let $K_0$ be the maximal lim--compact subgroup of $G_0$ constructed above 
in the proof of Theorem \ref{base-cycle}.  We will use the notation
\begin{equation}
Y = K_0(\cF), \,\,\, G\{Y\} = \{g \in G \mid gY \subset D\}, \,\,\, 
G_Y = \{g \in G \mid gY = Y\}, \,\,\, \cM'_D = G\{Y\}\cdot Y.
\end{equation}
where $Y$ is the complex $K_0$--orbit in the open $G_0$--orbit $D \subset
\cZ_{\cF,E}$\,.  We refer to $Y$ as the {\bf base cycle} in $D$.  Note that
the elements of $G\{Y\}$ do not have to map $D$ into itself; $G\{Y\}$ simply
is the set of of all elements in the complex group that keep the base cycle 
$Y$ inside $D$.  Further, $\cM'_D:= G\{Y\}\cdot Y$ is the set of all such
$G$--translates of $Y$.

\begin{lemma} \label{open-in-G}
$G\{Y\}$ is an open subset of $G$, $G_Y$ is a closed complex subgroup of $G$,
and $\cM'_D = G\{Y\}/G_Y$ is an open subset of the complex manifold
$G/G_Y$\,.  In particular $\cM'_D$ is an open complex submanifold of $G/G_Y$\,.
\end{lemma}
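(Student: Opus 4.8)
The plan is to push the finite--dimensional cycle--space results of \cite{FHW2005} through the exhaustion already built in the proof of Theorem \ref{base-cycle}: there $G = \varinjlim G_\ell$, $Y = \varinjlim Y_\ell$ with $Y_\ell = K_{\ell,0}(\cF\cap V_\ell)$ a \emph{compact} complex flag manifold, and $D = \varinjlim D_\ell$ with $D_\ell = G_{\ell,0}(\cF\cap V_\ell)$ the open orbit on $\cZ_{\cF\cap V_\ell,\widetilde{E}_\ell}$ carrying it. At each level the finite--dimensional theory gives that $G_\ell\{Y_\ell\}$ is open in $G_\ell$, that $(G_\ell)_{Y_\ell}$ is a closed complex subgroup, and that $\cM'_{D_\ell}$ is open in $G_\ell/(G_\ell)_{Y_\ell}$. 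Everything then reduces to transferring these to the ind--topology, in which a set is open (resp.\ closed) precisely when its intersection with every $G_\ell$ is.

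First I would dispose of $G_Y$, since closedness behaves well under the limit. For $g \in G_M$ one has $gY = Y$ iff $gY_\ell = Y_\ell$ for all $\ell \geq M$, so $G_Y \cap G_M = \bigcap_{\ell \geq M}\{g \in G_M : gY_\ell = Y_\ell\}$ is an intersection of closed complex subgroups of $G_M$, hence itself a closed complex subgroup. Since this holds for every $M$, $G_Y$ is a closed complex ind--subgroup and $G/G_Y$ is a complex ind--manifold; the point is that \emph{arbitrary} intersections of closed subgroups cause no difficulty. Granting that $G\{Y\}$ is open, the quotient statement is then formal: the projection $\pi : G \to G/G_Y$ is open and continuous, and $G\{Y\}$ is right $G_Y$--invariant (if $gY \subset D$ and $hY = Y$ then $ghY = gY \subset D$), so $\pi^{-1}\bigl(\pi(G\{Y\})\bigr) = G\{Y\}$ and $\pi(G\{Y\})$ is open. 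As $gY = g'Y$ exactly when $g^{-1}g' \in G_Y$, the assignment $gY \mapsto gG_Y$ identifies $\cM'_D = G\{Y\}\cdot Y$ with the open subset $\pi(G\{Y\}) = G\{Y\}/G_Y$ of $G/G_Y$, giving the asserted open complex submanifold.

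The real work, and the main obstacle, is the openness of $G\{Y\}$. Fixing $M$ and setting $W_\ell = \{g \in G_M : gY_\ell \subset D\} = G_\ell\{Y_\ell\}\cap G_M$, each $W_\ell$ is open by the finite--dimensional fact (using compactness of $Y_\ell$ and openness of $D_\ell$, together with $gy \in D \Leftrightarrow gy \in D_\ell$ for $g \in G_M$ and $y \in Y_\ell$), the sequence $W_\ell$ is decreasing, and $G\{Y\}\cap G_M = \bigcap_{\ell \geq M} W_\ell$. Here the limit does \emph{not} pass formally, because an infinite intersection of open sets need not be open; this is exactly where the non--compactness of $Y$ bites, in sharp contrast with the closed case above.

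I expect to resolve this by a finitary confinement argument. Any $g \in G_M$ is the identity on $V_M^{\perp} = \Span\{E_k \mid k > M\}$ and preserves the nondegenerate summand $V_M$; hence for a flag component $W$ and vectors $u = u_0 + u_1,\ v = v_0 + v_1 \in W$ with $u_0,v_0 \in V_M$ and $u_1,v_1 \in V_M^{\perp}$, the orthogonality $h(V_M,V_M^\perp)=0$ gives $h(gu,gv) = h(gu_0,gv_0) + h(u_1,v_1)$. Thus the Hermitian (or bilinear) form on $g(y_\alpha)$ differs from that on $y_\alpha$ only through the fixed finite--dimensional $V_M$--block, so whether $g(y_\alpha)$ degenerates is governed by finitely many coordinates and any degeneracy is detected inside a bounded region depending only on $M$. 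This should yield an $\ell_0 = \ell_0(M)$ with $W_\ell = W_{\ell_0}$ for all $\ell \geq \ell_0$, whence $G\{Y\}\cap G_M = \bigcap_{M \leq \ell \leq \ell_0} W_\ell$ is a \emph{finite} intersection of open sets, hence open, and $G\{Y\}$ is open. The delicate point will be making this confinement uniform over the non--compact family $Y$, and I expect that to require the explicit block structure of $K$ and $\cF$ recorded in Section \ref{sec3} rather than soft topology alone.
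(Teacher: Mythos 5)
Your overall route is the same as the paper's: reduce everything to the levels $G_\ell$ of the exhaustion built in the proof of Theorem \ref{base-cycle} and pass to the direct limit topology. Your treatment of $G_Y$ (an arbitrary intersection of closed complex subgroups of $G_M$ at each level) and of the quotient statement (right $G_Y$--invariance of $G\{Y\}$ plus openness of the projection $G \to G/G_Y$) is correct, and is exactly what the paper does---indeed the paper's entire proof consists of the assertions that $G_Y \cap G_\ell$ is a closed complex subgroup and $G\{Y\} \cap G_\ell$ is open in $G_\ell$, followed by the limit. To your credit, you have correctly isolated the one genuinely nontrivial point, which the paper passes over in silence: since $Y$ is not compact, $G\{Y\}\cap G_M = \bigcap_{\ell \geq M} W_\ell$ is an \emph{infinite} intersection of open sets, and openness does not pass to the limit formally.

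The gap is that your resolution of this point is a plan rather than a proof (``I expect to resolve this\dots'', ``should yield an $\ell_0(M)$''). The computation $h(gu,gv)=h(gu_0,gv_0)+h(u_1,v_1)$ is correct, but it only controls how $g \in G_M$ disturbs the form inside the $V_M$--block of one subspace at a time; it does not by itself give the uniformity over the non-compact family $Y$, i.e.\ over all $k \in K_0$, which you yourself flag as the delicate step. The missing ingredient is the spanning argument the paper deploys in the proof of Theorem \ref{main-su}: for $0 \neq F^{(1)}$ in the flag one has $\Span K_0(F^{(1)}\cap V_\pm) = V_\pm$\,, so $gY \subset D$ holds if and only if $gV_+$ is positive definite and $gV_-$ is negative definite (whichever of the two conditions is relevant to $D$); and for $g \in G_M$, since $g$ is the identity on the complement of $V_M$ and that complement is $h$--orthogonal to $V_M$, these conditions reduce to the finitely many open conditions that $g(V_+\cap V_M)$ be positive definite and $g(V_-\cap V_M)$ negative definite inside $V_M$. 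With that in hand your stabilization claim is true, essentially with $\ell_0 = M$, and your finite-intersection conclusion goes through. One further small point to record: nondegeneracy of every $gy$ only places $gY$ in the union of the open orbits; that $gY$ lies in $D$ itself rather than in another open orbit uses connectedness of $Y$ together with local constancy of the signature. So: right skeleton, honest identification of the real difficulty, but the crucial levelwise openness is left as a sketch whose completion requires the definiteness/spanning argument above.
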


\begin{proof}
For each $\ell$,  $G_Y \cap G_\ell$ is a closed complex subgroup of $G_\ell$
and $G\{Y\} \cap G_\ell$ is an open subset of $G_\ell$\,.  It follows that
$G_Y$ is a closed complex subgroup of $G$ and $G\{Y\}$ is an open subset
of $G$.  Now $G\{Y\}/G_Y$ is open in the complex homogeneous space $G/G_Y$\,.
\end{proof}
The complex manifold structure of $\cM'_D$ specifies its topology, and we
define
\begin{definition}
Let $\cM_D$ denote the topological component of $Y$ in $\cM'_D$\,.  
Then $\cM_D$ is the {\bf cycle space} of the flag domain $D$.
\end{definition}

Note that $\cM_D$ is not always the same as the Barlet cycle space
\cite{M2004} from the theory of complex analytic spaces.  See 
\cite[Part IV]{FHW2005} for the comparison.  Next, we discuss several cases 
where we can really pin down the structure of $\cM_D$\,.
\smallskip

\subsection{Cycle Spaces for $SU(\infty,q)$, $q \leqq \infty$.} \label{ssec5b}
\setcounter{equation}{0}
In this section $G_0 = SU(\infty,q)$ and its maximal lim-compact subgroup is
$$
\begin{aligned}
&K_0 = S(U(\infty)\times U(q)) \,\,\,= {\lim}_{p\to\infty}\, S(U(p)\times U(q)) 
	\text{ if } q < \infty,\\
&K_0 = S(U(\infty)\times U(\infty)) = {\lim}_{r,s\to\infty}\, S(U(r)\times U(s))
	\text{ if } q = \infty.
\end{aligned}
$$
This corresponds to an $h$--orthogonal decomposition
\begin{equation}\label{su-splitting0}
\begin{aligned}
&\C^{\infty,q} \phantom{j}= V_+ \oplus V_- \text{ where }
        V_+ = \Span\{\dots , e_{-3}, e_{-2}, e_{-1}\},\,
        V_- = \Span\{e_1, \dots , e_q\} \text{ or }\\
&\C^{\infty,\infty} = V_+ \oplus V_- \text{ where }
        V_+ = \Span\{\dots , e_{-3}, e_{-2}, e_{-1}\},\,
        V_- = \Span\{e_1, e_2, e_3, \dots \}.
\end{aligned}
\end{equation}
Here we use
the related orthogonal basis $E$ given by (\ref{su-basis}) and the
hermitian form $h$ of (\ref{hform-su}) that defines $G_0$\,.
Let $\cF = (F_k)$ be a generalized flag in $V = \C^{\infty,q}$
that is weakly compatible with $E$.  Let $\cF^{(1)} \in \cZ_{\cF,E}$ so that
$D = G_0(\cF^{(1)})$ is an open $G_0$--orbit.  Then we may assume that
$\cF^{(1)}$ is compatible with our choice of $E$, so it fits the decomposition
(\ref{su-splitting0}) in the sense that
\begin{equation}\label{su-splitting1}
\cF^{(1)} = (F^{(1)}_k) \text{ where each }
        F^{(1)}_k = (F^{(1)}_k \cap V_+) \oplus (F^{(1)}_k \cap V_-).
\end{equation}
Then $K_0(\cF^{(1)})$ is the unique $K_0$--orbit in
$D$ that is a complex submanifold of the flag manifold $\cZ_{\cF,E}$\,.
Concretely, $K_0(\cF^{(1)})$ is the product of ``smaller'' complex
flag manifolds,
\begin{equation} \label{Y-splits}
\begin{aligned}
Y = Y_1 \times Y_2 &\text{ where }\\
   &Y_1 = K_0(\cF^{(1)} \cap V_+) = 
	U(\infty)(\cF^{(1)}\cap V_+) \text{ in } V_+ \text{ and }\\
   &Y_2 = K_0(\cF^{(1)} \cap V_-) = 
	U(q)(\cF^{(1)} \cap V_-) \text{ in } V_-
\end{aligned}
\end{equation}
where $\cF^{(1)} \cap V_+$ is the generalized flag of the  
$(F^{(1)}_k \cap V_+)$ and
$\cF^{(1)} \cap V_-$ is the generalized flag of the  
$(F^{(1)}_k \cap V_-)$, ignoring repetitions.
The signature sequence $\{(a_k,b_k)\}$, where $h$ has
signature $(a_k,b_k,0)$ on $F^{(1)}_k$, specifies the open orbit 
in $\cZ_{\cF,E}$ and the factors of $Y$.
\smallskip

As in the finite dimensional case, this shows that the $G$--translates of
$Y$ contained in $D$ correspond to the decompositions
$V = W' \oplus W''$ where (i) $W'$ is a maximal
positive definite subspace such that $V_+ \cap W'$ has finite codimension
in both in $V_+$ and in $W'$, and (ii) $W''$ is a maximal negative 
definite subspace such that $V_- \cap W''$ has finite codimension in 
both in $V_-$ and in $W''$.
If $\cF^{(1)} = \cF^{(1)} \cap V_+$ the correspondence depends only on 
$W'$, and if $\cF^{(1)} = \cF^{(1)} \cap V_-$ it depends only on $W''$.
Any two such decompositions $V = W' \oplus W''$ are $G$--equivalent.
\smallskip

\begin{definition}\label{def-bded-dom}
{\rm The {\sl positive bounded symmetric domain}
$\cB^+_E$ associated to $(V,E)$ is the space of all maximal positive
definite subspaces $W' \subset V$ such that $W'\cap V_+$ has finite
codimension in both $W'$ and $V_+$\,.  The {\sl negative
bounded symmetric domain} $\cB^-_E$ associated to $(V,E)$ is the
space of all maximal negative definite subspaces $W'' \subset V$ such that
$W'' \cap V_-$ has finite codimension in both $W''$
and $V_-$\,.}
\end{definition}

As constructed, each element $W' \in \cB^+_E$ is in the $G$--orbit of $V_+$\,.
Relative to the basis $E$ we look at $g = \left ( \begin{smallmatrix}
A & B \\ C & D \end{smallmatrix}\right ) \in G$ such that $gW' \in
\cB^+_E$, in other words such that the column span of $\left (
\begin{smallmatrix} A \\ C  \end{smallmatrix}\right )$ is positive definite.
The column span is preserved under right multiplication by $A$, so the
positive definite condition is 
$\left ( \begin{smallmatrix} I \\ -CA^{-1} \end{smallmatrix}\right )^*\cdot
\left ( \begin{smallmatrix} I \\ CA^{-1} \end{smallmatrix}\right ) \ggg 0$.
In other words $gW' \in
\cB^+_E$ simply means that $gW'$ is the column span of an infinite
matrix $\left ( \begin{smallmatrix} I \\ Z_1  \end{smallmatrix}\right )$
such that $I - Z_1^*Z_1 \ggg 0$.  Similarly $gW'' \in
\cB^-_E$ simply means that $gW''$ is the column span of an infinite
matrix $\left ( \begin{smallmatrix} Z_2 \\ I  \end{smallmatrix}\right )$
such that $I - Z_2Z_2^* \ggg 0$.  The distinction is that the $G$--stabilizer
of $V_+ \in \cB^+_E$ is the parabolic $P$ consisting of all $\left (
\begin{smallmatrix} A & B \\ 0 & D  \end{smallmatrix}\right )$,
while the $G$--stabilizer of $V_- \in \cB^-_E$ is the opposite parabolic
$\tr P = P^{opp}$ consisting of all 
$\left ( \begin{smallmatrix} A & 0 \\ C & D
\end{smallmatrix}\right )$.  Thus they have conjugate complex structures:
$\cB^+_E$ has holomorphic tangent space represented by the matrices
$\left ( \begin{smallmatrix} 0 & 0 \\ C & 0  \end{smallmatrix}\right ) \in \gg$
while the holomorphic tangent space of $\cB^-_E$ is represented by the
$\left ( \begin{smallmatrix} 0 & B \\ 0 & 0  \end{smallmatrix}\right ) \in \gg$.\smallskip

Reformulating this,
\begin{lemma} \label{bded-dom-su}
Suppose that $G_0 = SU(\infty,q)$, $q \leqq \infty $.  Then the
positive bounded symmetric domain associated to
the triple $(V,G_0,E)$ is
$\cB^+_E \cong \{Z_1 \in \C^{\infty \times q} \mid I - Z_1^*Z_1 \ggg 0\}$ in
$G/P$,  and the negative bounded symmetric domain for
$(V,G_0,E)$ is the complex conjugate domain
$\cB^-_E \cong \{Z_2 \in \C^{q\times \infty} \mid I - Z_2^*Z_2 \ggg 0\}$
in  $G/P^{opp}$.
\end{lemma}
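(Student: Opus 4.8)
The plan is to identify $\cB^+_E$ with an explicit affine piece of the flag manifold $G/P$, following the model computation of Section \ref{ssec4a}. By the discussion preceding the statement, every $W' \in \cB^+_E$ lies in the $G$--orbit of $V_+$, and the $G$--stabilizer of $V_+$ is the parabolic $P$ of block matrices $\left ( \begin{smallmatrix} A & B \\ 0 & D \end{smallmatrix} \right )$; hence the orbit map $gP \mapsto gV_+$ realizes $\cB^+_E$ as a subset of $G/P$. First I would fix this identification and then introduce affine coordinates on the big cell of $G/P$, namely the cell on which the top block $A$ is invertible, where a point is the column span of a unique $\left ( \begin{smallmatrix} I \\ Z_1 \end{smallmatrix} \right )$ with $Z_1 = CA^{-1}$.

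The key step is to check that all of $\cB^+_E$ lands in this big cell, i.e.\ that the orthogonal projection $\pi_+ \colon W' \to V_+$ is an isomorphism for each $W' \in \cB^+_E$. Exactly as in Section \ref{ssec4a}, $\pi_+|_{W'}$ has kernel $W' \cap V_- = 0$, since $W'$ is $h$--positive definite while $V_-$ is $h$--negative definite; thus $\pi_+|_{W'}$ is injective and $A$ has trivial kernel. For surjectivity I would combine the finite--codimension clause of Definition \ref{def-bded-dom} with the maximality of $W'$: since $W' \cap V_+ \subseteq \pi_+(W')$ has finite codimension in $V_+$, if $\pi_+(W')$ were proper I could choose a nonzero $v \in V_+$ orthogonal to $\pi_+(W')$, whence $h(w,v)=0$ for all $w \in W'$ and $h(v,v)>0$, so $W' + \C v$ would be a strictly larger positive definite subspace, contradicting maximality. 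Hence $W' = \{x + Z_1 x \mid x \in V_+\}$ is the graph of a map $Z_1 \colon V_+ \to V_-$, which is finitary precisely because $W' \cap V_+$ has finite codimension in $W'$.

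Granting this, the positive definiteness of $h|_{W'}$ is the condition $\left ( \begin{smallmatrix} I \\ Z_1 \end{smallmatrix} \right )^* H \left ( \begin{smallmatrix} I \\ Z_1 \end{smallmatrix} \right ) \ggg 0$ with $H = \diag(I,-I)$, which reads $I - Z_1^* Z_1 \ggg 0$, and conversely any such finitary $Z_1$ produces a $W' \in \cB^+_E$. This yields the asserted bijection $\cB^+_E \cong \{Z_1 \in \C^{\infty \times q} \mid I - Z_1^* Z_1 \ggg 0\}$, with holomorphic tangent space represented by the strictly lower--triangular block $\left ( \begin{smallmatrix} 0 & 0 \\ C & 0 \end{smallmatrix} \right )$ computed above. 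The negative domain $\cB^-_E$ is then handled by the identical argument with the roles of $V_+$ and $V_-$ interchanged: the stabilizer of $V_-$ is the opposite parabolic $P^{opp}$, each $W''$ is the graph of a finitary $Z \colon V_- \to V_+$ with $I - Z^* Z \ggg 0$, and passing to the conjugate transpose $Z_2 = Z^*$ rewrites the domain in coordinates $Z_2 \in \C^{q \times \infty}$ and exhibits $\cB^-_E$ as the complex conjugate of $\cB^+_E$, with holomorphic tangent space the strictly upper--triangular block $\left ( \begin{smallmatrix} 0 & B \\ 0 & 0 \end{smallmatrix} \right )$.

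I expect the main obstacle to be the surjectivity of $\pi_+|_{W'}$ in the case $q = \infty$, where both $V_+$ and $V_-$ are infinite dimensional and ``maximal positive definite'' can no longer be read off a dimension count. Although the maximality argument above is formally uniform in $q$, I would want to verify it cleanly by descending to the finite--dimensional pieces $V_\ell$ of the exhaustion, comparing $W' \cap V_\ell$ with $V_+ \cap V_\ell$ up to the finite discrepancy recorded by the codimension datum of Definition \ref{def-bded-dom} and applying Witt's theorem inside each $V_\ell$, before passing to the direct limit. This is the same finitary bookkeeping used for the Harish--Chandra realizations earlier in Section \ref{sec4}, and it is what makes the matrix inequality $I - Z_1^* Z_1 \ggg 0$ meaningful (finitely supported, so interpretable through its finite truncations) in the infinite--rank setting.
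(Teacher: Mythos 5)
Your outline---big--cell coordinates $Z_1 = CA^{-1}$, the graph description of $W'$, positivity read off as $I - Z_1^*Z_1 \ggg 0$, and the $P$ versus $P^{opp}$ comparison of complex structures---is the same route the paper takes. But two of your intermediate steps have genuine gaps, and both have the same root: you try to derive membership in the big cell and finitariness intrinsically from the literal wording of Definition \ref{def-bded-dom}, whereas the paper's lemma concerns the domain ``as constructed,'' i.e.\ the set of positive definite \emph{finitary $G$--translates} of $V_+$ (this is how it is restated in Theorem \ref{main-su}), for which Lemma \ref{wk-compat} does the real work. First, your surjectivity argument for $\pi_+|_{W'}$ assumes that a proper finite--codimension subspace of $V_+$ admits a nonzero $h$--orthogonal vector inside $V_+$. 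In this purely algebraic (pre--Hilbert) setting that can fail: $V_+$ is only the span of $\{e_i \mid i<0\}$, and the kernel of the functional $f(x) = \sum_j x_j/j$ is a codimension--one subspace of $V_+$ whose $h$--orthocomplement in $V_+$ is zero, since any orthogonal vector would have coefficients proportional to the non--finitely--supported sequence $(1/j)$. Second, your claim that $Z_1$ is ``finitary precisely because $W' \cap V_+$ has finite codimension in $W'$'' conflates finite rank with finitary: finite codimension of $\ker Z_1 = W' \cap V_+$ gives only finite rank, while finitary in the sense of Proposition \ref{bded-realization-su} means finitely many nonzero matrix entries. The discrepancy is not cosmetic. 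Take $q = 1$, $f$ as above, $u_0 \in V_+ \setminus \ker f$, and $c \ne 0$ small; then $W' = \ker f \oplus \C\,(u_0 + c e_1)$ is a maximal positive definite subspace with $W' \cap V_+$ of finite codimension in both $W'$ and $V_+$, so it satisfies the letter of Definition \ref{def-bded-dom}, yet its graph map sends $e_j \mapsto \tfrac{c f(e_j)}{f(u_0)} e_1$, which has infinitely many nonzero entries. Such a $W'$ is not weakly compatible with $E$ and is not any finitary translate $gV_+$ (consistently with Lemma \ref{wk-compat}), so the bijection you assert fails for the literal definition.

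Your proposed repair---descending to the pieces $V_\ell$ and applying Witt's theorem before passing to the limit---does not close this, because the problematic subspaces are not captured at any finite stage; the finite--dimensional bookkeeping only begins once you already know $W' = gV_+$ with $g \in G$ finitary, which is exactly the hypothesis to invoke rather than prove. With that reading everything you want is immediate, and this is what the paper does: writing $g = \left( \begin{smallmatrix} A & B \\ C & D \end{smallmatrix} \right)$ with $g$ the identity off finitely many basis vectors, the block $A$ is the identity plus a finite rank, finitely supported operator; it is injective by your (correct) observation that $W' \cap V_- = 0$, hence invertible, and $Z_1 = CA^{-1}$ is automatically finitary since $C$ is finitely supported and $A^{-1} - I$ has finite support. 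Substituting ``$W' \in G(V_+)$ by construction'' for your maximality/orthocomplement and codimension--counting steps, the remainder of your argument---the Gram computation $\left( \begin{smallmatrix} I \\ Z_1 \end{smallmatrix} \right)^* H \left( \begin{smallmatrix} I \\ Z_1 \end{smallmatrix} \right) = I - Z_1^* Z_1$ with $H = \diag(I,-I)$, the converse direction, and the conjugate complex structures attached to $P$ and $P^{opp}$---agrees with the paper's proof.
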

The action of $G_0$ on these bounded symmetric domains is described in
Section \ref{ssec4a}.
\smallskip

Now we are ready to prove the following theorem.

\begin{theorem}\label{main-su}
Let $G_0 = SU(\infty,q)$ with $q \leqq \infty$.  Let $D$ be an open 
$G_0$--orbit $G(\cF^{(1)})$ in $\cZ_{\cF,E}$.  In the notation of
{\rm (\ref{su-splitting0})}, the positive definite bounded symmetric domain 
$\cB^+_E$ for $(V,G_0,E)$ is the set of
all positive definite $G$-translates of $V_+$ and the negative definite 
bounded symmetric domain $\cB^-_E$ for $(V,G_0,E)$ is the set of is the
set of all negative definite $G$-translates of $V_-$\,.  The
$\cB^\pm_E$ are antiholomorphically diffeomorphic, in other words each is 
the complex conjugate of the other.  There are three
cases for the structure of the cycle space, as follows.  

If every $F^{(1)}_k \in \cF^{(1)}$ is positive definite then $\cM_D$
is holomorphically diffeomorphic to $\cB^+_E$\,.

If every $F^{(1)}_k \in \cF^{(1)}$ is negative definite then $\cM_D$
is holomorphically diffeomorphic to $\cB^-_E$\,.

If some $F^{(1)}_k \in \cF^{(1)}$ is indefinite then $\cM_D$ is
holomorphically diffeomorphic to $\cB^+_E \times \cB^-_E$\,.
\end{theorem}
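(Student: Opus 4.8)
The plan is to turn the correspondence sketched just before the theorem---between the $G$--translates of the base cycle $Y$ that lie in $D$ and the transverse decompositions $V = W' \oplus W''$---into an explicit biholomorphism, and then to read off the three cases from how much of such a decomposition the cycle actually remembers. Recall from \eqref{Y-splits} that $Y = Y_1 \times Y_2$; because $U(\infty)$ (resp. $U(q)$) acts transitively on the flags of a fixed dimension type in $V_+$ (resp. $V_-$), each factor $Y_i$ is the \emph{full} complex flag manifold of its type inside $V_+$ or $V_-$. Thus a point of $Y$ is exactly a flag $\cF^{(2)}$ adapted to $V = V_+ \oplus V_-$ as in \eqref{su-splitting1}, and $gY$ is precisely the set of flags adapted to the translated decomposition $V = gV_+ \oplus gV_-$. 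I would therefore attach to each translate the pair $\Phi(gY) := (gV_+, gV_-)$.

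First I would verify that $\Phi$ maps $\cM'_D$ into $\cB^+_E \times \cB^-_E$ and is surjective. By the signature/nondegeneracy description of the open orbit (Theorem \ref{open-orbit-su}), a translate satisfies $gY \subset D$ exactly when the positive parts of its flags are $h$--positive definite and the negative parts $h$--negative definite, i.e. when $W' = gV_+$ is a maximal positive definite subspace and $W'' = gV_-$ a maximal negative definite subspace; weak compatibility supplies the finite--codimension conditions, so $(W',W'') \in \cB^+_E \times \cB^-_E$ by Definition \ref{def-bded-dom}. Conversely, given any $(W',W'') \in \cB^+_E \times \cB^-_E$, definiteness forces $W' \cap W'' = 0$, and in the matrix coordinates of Lemma \ref{bded-dom-su} the conditions $I - Z_1^*Z_1 \ggg 0$ and $I - Z_2^*Z_2 \ggg 0$ bound the relevant operator norms below $1$, so the pertinent block $I - Z_2Z_1$ is invertible and $V = W' \oplus W''$; since any two such decompositions are $G$--equivalent (the discussion before the theorem), there is $g \in G$ with $(gV_+, gV_-) = (W',W'')$, and then $gY \subset D$ by definiteness, so $g \in G\{Y\}$ and $\Phi(gY) = (W',W'')$.

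The decisive step is to compute the fibres of $\Phi$, equivalently the stabilizer $G_Y$, and this is exactly where the three cases separate. Since each $Y_i$ is the full flag manifold of its type in $V_\pm$, the family of its flags recovers its ambient space: whenever $Y_1$ is nontrivial the subset $Y$ determines $V_+$, and whenever $Y_2$ is nontrivial it determines $V_-$. In case 3 some $F^{(1)}_k$ is indefinite, so both $F^{(1)}_k \cap V_+$ and $F^{(1)}_k \cap V_-$ are nonzero, both $Y_1$ and $Y_2$ are nontrivial, $G_Y = P \cap P^{opp}$ is the Levi stabilizing the decomposition, and $\Phi$ is a bijection onto $\cB^+_E \times \cB^-_E$. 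In case 1 every $F^{(1)}_k$ is positive definite, so $\cF^{(1)} \cap V_-$ is trivial, $Y_2$ is a single point, every flag in $Y$ has all its members in $V_+$, and $gY$ depends only on $W' = gV_+$; here $G_Y = P$ is the stabilizer of $V_+$, and $\Phi$ descends to a bijection onto $\cB^+_E$. Case 2 is the mirror image and yields $\cB^-_E$. The main obstacle I anticipate is making airtight the rigidity assertion that the collection of all flags split by a decomposition recovers precisely its nontrivial summands $V_\pm$; I would settle it by realizing each relevant $V_\pm$ as the span (equivalently, the common maximal member) of the associated full flag manifold $Y_i$, which is available precisely because the transitivity above makes $Y_i$ the complete flag manifold of its type.

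Finally I would check that these bijections are biholomorphisms and that restricting to the topological component of $Y$ changes nothing. Holomorphy is immediate from the realizations: $\Phi$ is the $G$--equivariant map induced by $G/G_Y \to G/P \times G/P^{opp}$, and in the coordinates $Z_1, Z_2$ of Lemma \ref{bded-dom-su} both $\Phi$ and its inverse are given by rational, hence holomorphic, formulas. Each of $\cB^+_E$ and $\cB^-_E$ is connected (star--shaped about the origin $Z = 0$), so in every case the target is connected and contains the image $(0,0)$ of the base cycle $Y$; therefore the component $\cM_D$ of $Y$ in $\cM'_D$ already maps biholomorphically onto the full target, giving $\cM_D \cong \cB^+_E \times \cB^-_E$, $\cB^+_E$, or $\cB^-_E$ in the three respective cases.
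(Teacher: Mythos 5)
Your proposal is correct and takes essentially the same route as the paper's own proof: the same correspondence $gY \leftrightarrow (gV_+,gV_-)$, the same rigidity argument recovering $gV_\pm$ as the spans of the factors of $gY$, the same reduction to the three cases according to whether $\cF^{(1)}$ meets $V_+$, $V_-$, or both, and the same connectedness step giving $\cM_D = \cM'_D$. Your explicit computation of the fibres via $G_Y = P$, $P^{opp}$, or $P \cap P^{opp}$ only makes precise what the paper leaves implicit after Lemma \ref{open-in-G}.
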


\begin{proof} Directly from Definition \ref{def-bded-dom}, $gV_+$ is 
$h$--positive definite if and only if $gV_+ \in \cB^+_E$\,, $gV_-$
is $h$--negative definite if and only if $gV_- \in \cB^-_E$\,, and
both properties hold for $gV_\pm$ if and only if $(gV_+, gV_-)
\in \cB^+_E \times \cB^-_E$\,. 

First suppose that $\cF^{(1)} = \cF^{(1)} \cap V_+$\,, $g \in G$
and $k \in K_0$\,.  Note $kV_+ = V_+$\,.
If $gV_+$ is positive definite then $gk(\cF^{(1)}) \in D$ because $gk(\cF^{(1)})$ is
nondegenerate and $D$ is the only open $G_0$--orbit in $\cZ_{\cF,E}$
consisting of positive definite subspaces.   Thus $gY \subset D$, in other
words $gY \in \cM'_D$\,.  Conversely if $gY \in \cM'_D$, so $gY \subset D$,
then $gY$ consists of positive definite subspaces.  If $0 \ne F^{(1)} \in \cF^{(1)}$
then $\Span K_0(F^{(1)}) = V_+$\,, so $\Span gY = gV_+$ is positive definite.
Now $gY \in \cM'_D$ if and only if $gV_+ \in \cB^+_E$\,.
\smallskip

Similarly, if $\cF^{(1)} = \cF^{(1)} \cap V_-$ and $g \in G$ then
$gY \in \cM'_D$ if and only if $gV_- \in \cB^-_E$\,.
\smallskip

In the general case $\cF^{(1)} \cap V_+ \ne \cF^{(1)} \ne \cF^{(1)} \cap V_-$ the arguments
just above show that $gV_+$ is positive definite if and only if 
$gK_0(\cF^{(1)} \cap V_+)$ consists of positive definite subspaces; and 
$gV_-$ is negative definite if and only if 
$gK_0(\cF^{(1)} \cap V_-)$ consists of negative definite subspaces.  
Thus $gY \in \cM'_D$ if and only if $gV_+$ is positive definite and
$gV_-$ is negative definite, in other words if and only if $(gV_+, gV_-)
\in \cB^+_E \times \cB^-_E$\,.
\smallskip

In all three cases we note that $\cM'_D$ is connected, so $\cM'_D = \cM_D$\,.
\smallskip

Finally, $h$--orthocomplementation is antiholomorphic and interchanges
$\cB^+_E$ with $\cB^-_E$\,.
\end{proof}

\subsection{Cycle Spaces for $Sp(\infty;\R)$.}\label{ssec5c}
\setcounter{equation}{0}
The case $G_0 = Sp(\infty;\R) = Sp(\infty;\C) \cap U(\infty,\infty)$
differs from the $SU(\infty,q)$ cases mainly in that we use $b$--isotropic 
flags where $b$ is the antisymmetric bilinear form that defines
$Sp(\infty;\C)$.  Specifically, we use the basis and forms described in 
Section \ref{sec3}, given by (\ref{so*-basis}) and (\ref{spr-forms}),
where $b$ defines $Sp(\infty;\C)$ and $h$ defines $U(\infty,\infty)$.
\smallskip

The maximal lim-compact subgroups $K_0$ of $G_0 = Sp(\infty;\R)$ is
the $U(\infty)$ constructed as follows.  Relative to $h$,
\begin{equation}\label{sp-splitting}
V = V_+ \oplus V_- \text{ where }
        V_+ = \Span\{\dots , e_{-3}, e_{-2}, e_{-1}\} \text{ and }
        V_- = \Span\{e_1, e_2, e_3, \dots \}.
\end{equation}
The maximal lim-compact subgroup of $U(\infty,\infty)$ is
$U(V_+) \times U(V_-) = U(\infty) \times U(\infty)$, and $K_0$
is the subgroup 
$G_0 \cap \bigl ( U(\infty) \times U(\infty) \bigr ) \cong U(\infty)$.
In the ordered basis $\{e_{-1}, e_{-2}, \dots ; e_1, e_2, \dots\}$ it
would be diagonally embedded in $U(\infty) \times U(\infty)$.
\smallskip

Let $\cF$ be a $b$--isotropic generalized flag compatible with $E$.  
Let $\cF^{(1)} \in \cZ_{\cF,E}$ so that $D = G_0(\cF^{(1)})$ is open in 
$\cZ_{\cF,E}$.  Again, we may assume that $\cF^{(1)}$ is compatible with $E$,
in other words, it fits the splitting (\ref{sp-splitting}) in the sense that
$$
\cF^{(1)} =(F_k^{(1)}) \text{ where each } F^{(1)}_k = (F^{(1)}_k \cap V_+)\oplus (F^{(1)}_k \cap V_-).
$$
In particular $\cF^{(1)}$ is $h$--nondegenerate, corresponding to the fact that
$D = G_0(\cF)$ is open in $\cZ_{\cF,E}$, and $K_0(\cF^{(1)})$ is the unique 
$K_0$--orbit in $D$ that is a complex submanifold of $\cZ_{\cF,E}$\,.
\begin{lemma}\label{Y-splits-sp}
Define $\cF^{(1)} \cap V_+ = (F^{(1)}_k \cap V_+)$ and 
$\cF^{(1)}\cap V_-=(F^{(1)}_k \cap V_-)$, and spaces
$W_+ = \bigcup_k (F^{(1)}_k \cap V_+)$ and $W_- = \bigcup_k (F^{(1)}_k \cap V_-)$.
Then the complex lim--compact group orbit $Y = K_0(\cF^{(1)})$ is the 
subvariety of 
$$
\begin{aligned}
\widetilde{Y} = Y_1 \times Y_2 \text{ where }
   &Y_1 = K_0(\cF^{(1)} \cap V_+) = U(\infty)(\cF^{(1)}\cap V_+) \text{ in } 
	V_+ \text{ and }\\
   &Y_2 = K_0(\cF^{(1)} \cap V_-) = U(q)(\cF^{(1)}\cap V_-) \text{ in } V_-
\end{aligned}
$$
defined by $b(k_1W_+,k_2W_-) = 0$ for $k_1, k_2 \in K_0$\,.
The signature sequence $\{(a_k,b_k)\}$, where $h$ has
signature $(a_k,b_k,0)$ on $F^{(1)}_k$, specifies the open orbit 
in $\cZ_{\cF,E}$ and the factors of $\widetilde{Y}$.
\end{lemma}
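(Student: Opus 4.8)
The plan is to exploit the realization $Sp(\infty;\R) = Sp(\infty;\C) \cap U(\infty,\infty)$ from Section~\ref{ssec3e} and to reduce to the $SU(\infty,\infty)$ computation of Section~\ref{ssec5b}; the only genuinely new ingredient is the antisymmetric form $b$. First I would record the linear algebra forced by (\ref{spr-forms}): with $V = V_+ \oplus V_-$ as in (\ref{sp-splitting}), the form $b$ vanishes on $V_+ \times V_+$ and on $V_- \times V_-$ (since $i+j \neq 0$ whenever $e_i, e_j$ lie in the same summand) and restricts to a nondegenerate pairing $V_+ \times V_- \to \C$. Hence the maximal lim--compact subgroup $K_0 \cong U(\infty)$ sits diagonally inside $U(V_+) \times U(V_-)$, acting on $V_+$ by the standard representation and on $V_-$ by the contragredient one, so that $b$ is preserved; an element acting as $u$ on $V_+$ acts as the conjugate $\bar u$ on $V_-$. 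Because each $F^{(1)}_k = (F^{(1)}_k \cap V_+) \oplus (F^{(1)}_k \cap V_-)$ and $b$ annihilates each summand, the $b$--isotropy of the flag is equivalent to the single condition $b(W_+, W_-) = 0$ on the tops $W_\pm = \bigcup_k (F^{(1)}_k \cap V_\pm)$.

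Next I would obtain one inclusion by comparison with $SU(\infty,\infty)$. Forgetting $b$, the flag $\cF^{(1)}$ is $h$--nondegenerate, so it defines an open $SU(\infty,\infty)$--orbit whose base cycle, by the computation behind (\ref{Y-splits}), is the full product $\widetilde{Y} = Y_1 \times Y_2$, the orbit of $\cF^{(1)}$ under $U(V_+) \times U(V_-)$. The symplectic flag manifold $\cZ_{\cF,E}$ embeds in this ambient $SU$ flag manifold as the $b$--isotropic locus, and $K_0 \subset U(V_+) \times U(V_-)$ preserves both $b$ and the isotropy of $\cF^{(1)}$. Therefore $Y = K_0(\cF^{(1)})$ lies inside $\widetilde Y$ and inside the isotropic locus, giving $Y \subseteq \{\, (k_1(\cF^{(1)} \cap V_+), k_2(\cF^{(1)} \cap V_-)) \in \widetilde Y \mid b(k_1 W_+, k_2 W_-) = 0 \,\}$. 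That top--level orthogonality does encode isotropy of the whole combined flag, since every member sits inside the top.

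The reverse inclusion is the heart of the matter and the step I expect to be the main obstacle: I must show that every $b$--isotropic pair in $\widetilde Y$ is produced by a \emph{single} diagonal element of $K_0$, not merely by an independent pair $(k_1, k_2)$. Equivalently, $K_0 \cong U(\infty)$ acts transitively on the $b$--isotropic pairs $(A_+, A_-) \in Y_1 \times Y_2$. The difficulty is exactly the diagonal (rather than product) embedding of $K_0$: after choosing $u \in U(V_+)$ carrying the $V_+$--component of one pair to that of another, one must spend the residual freedom in the stabilizer of that $V_+$--flag to simultaneously correct the $V_-$--component through $\bar u$, and this is possible precisely because $b$ identifies $V_-$ with the conjugate dual of $V_+$ while the isotropy pins $A_-$ inside the $b$--annihilator determined by $A_+$. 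I would carry this out by the direct--limit method of Theorem~\ref{base-cycle}: write $V = \varinjlim V_\ell$ with each $V_\ell$ both $b$-- and $h$--nondegenerate and $K_0 = \varinjlim K_{\ell,0}$, note that $\cF^{(1)} \cap V_\pm$ restricts compatibly to $V_\ell$, and invoke the finite--dimensional description of the $Sp(n;\R)$ base cycle (a Witt's theorem transitivity statement; see \cite{W1969}, \cite{FHW2005}) identifying $K_{\ell,0}(\cF^{(1)} \cap V_\ell)$ with the $b$--isotropic locus in $Y_{1,\ell} \times Y_{2,\ell}$. Passing to the limit yields the reverse inclusion and hence the asserted description of $Y$.

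The remaining signature bookkeeping is routine: the sequence $\{(a_k,b_k)\}$, with $h|_{F^{(1)}_k}$ of signature $(a_k, b_k, 0)$, records $\dim(F^{(1)}_k \cap V_+) = a_k$ and $\dim(F^{(1)}_k \cap V_-) = b_k$; these fix the commensurability types of $\cF^{(1)} \cap V_+$ and $\cF^{(1)} \cap V_-$, hence the factors $Y_1$ and $Y_2$ of $\widetilde Y$, and thereby pin down the open orbit.
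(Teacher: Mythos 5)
Your proposal is correct, but it reaches the lemma by a different route than the paper. For the inclusion $Y \subseteq \widetilde{Y}$ and the orthogonality of the tops, the paper does not pass through the $SU(\infty,\infty)$ picture: it argues directly from compatibility with $E$, noting that each $F^{(1)}_k$ is spanned by a subset $S_k \subset E$ containing no pair $\{e_i,e_{-i}\}$, and introducing the auxiliary map $\mu$ (with $\mu(e_i)=e_{-i}$, $\mu(e_{-i})=-e_i$) to see that the $r_1(K_0)$-- and $r_2(K_0)$--actions on the two halves involve disjoint parts of $S_k \cup -S_k$; your alternative --- viewing $\cF^{(1)}$ as $h$--nondegenerate in the ambient $SL$--flag manifold, quoting the product description (\ref{Y-splits}) of the $SU(\infty,\infty)$ base cycle, and cutting with the isotropic locus --- is equally valid and arguably cleaner, since $K_0$ manifestly preserves $b$. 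For the converse, which you rightly identify as the heart, the paper again stays in infinite dimensions: it observes that once $k_1W_+ = kW_+ \perp_b kW_- = k_2W_-$, the group $K_0$ moves $\cF^{(1)}\cap V_+$ freely within $V_+\cap (W_-)^\perp$ and $\cF^{(1)}\cap V_-$ freely within $V_-\cap (W_+)^\perp$ --- a direct Witt--type transitivity assertion, stated without finite--dimensional approximation. You instead run the direct--limit scheme of Theorem \ref{base-cycle}: since $k_1,k_2$ are finitary they lie in some $K_{\ell,0}$, the orthogonality restricts to level $\ell$, and compatibility with $E$ lets a level--$\ell$ solution extend by the identity; this works, and the mechanism you name (the pairing $b$ identifying $V_-$ with the conjugate dual of $V_+$, so that $b(v,w)=h(v,\phi(w))$ converts the isotropy constraint into $h$--orthogonality of two flags in a definite space, where $U(n)$--transitivity is classical) is exactly the content hiding behind the paper's ``moves freely'' sentence. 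One caution: your citation of \cite{W1969} and \cite{FHW2005} for a ready--made identification of the finite--dimensional $Sp(n;\R)$ base cycle with the $b$--isotropic locus in $Y_{1,\ell}\times Y_{2,\ell}$ is optimistic --- you should expect to prove that elementary transitivity statement yourself, as sketched above, rather than quote it. The trade--off: the paper's argument is shorter and self--contained but terse at precisely the transitivity step; yours is longer but makes that step verifiable at finite level and uniform with the proof of Theorem \ref{base-cycle}. Your signature bookkeeping ($a_k = \dim(F^{(1)}_k\cap V_+)$, $b_k = \dim(F^{(1)}_k\cap V_-)$) agrees with the paper, which simply refers this point to Theorem \ref{open-orbit-su}.
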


\begin{proof} The projections $r_1: K_0 \to U(V_+)$ and $r_2:K_0 \to U(V_-)$
are isomorphisms.  Define $\mu: V \to V$ by $\mu(e_i) = e_{-i}$ and 
$\mu(e_{-i}) = -e_i$ for $i > 0$.  Since $\cF^{(1)}$ is $b$--isotropic and
compatible with $E$, each $F^{(1)}_k$ is spanned by a subset $S_k \subset E$
that never contains a pair $\{e_i,e_{-i}\}$.  Thus
each $(\cF_k^{(1)} \cap V_+) + \mu(\cF_k^{(1)} \cap V_+)$ is $b$--nondegenerate and
$h$--nondegenerate, and is orthogonal to 
$(\cF_k^{(1)} \cap V_-) + \mu(\cF_k^{(1)} \cap V_-)$ relative to both $b$ and $h$.
Now the action of $r_1(K_0)$ on 
$(\cF_k^{(1)} \cap V_+)+ \mu(\cF_k^{(1)} \cap V_+)$ and the action of $r_2(K_0)$
on $(\cF_k^{(1)} \cap V_-) + \mu(\cF_k^{(1)} \cap V_-)$ only involve disjoint 
subsets of $S_k \cup -S_k$.  Thus $Y \subset \widetilde{Y}$ and
$b(k_1W_+,k_2W_-) = 0$ for $k_1, k_2 \in K_0$\,.  
\smallskip

Conversely, if $(k_1(\cF_k^{(1)} \cap V_+),
k_2(\cF_k^{(1)} \cap V_+)) \in Y$, so it has form $(k(\cF_k^{(1)} \cap V_+),
k(\cF_k^{(1)} \cap V_+))$, then $k_1W_+ = kW_+ \perp_b kW_- = k_2W_-$.
Given $kW_+ \perp_b kW_-$\,, $K_0$ moves $(\cF_k^{(1)} \cap V_+)$ freely within
$V_+\cap (W_-)^\perp$ and moves $(\cF_k^{(1)} \cap V_-)$ freely within 
$V_+\cap (W_+)^\perp$\,.  That proves the first assertion.  The signature
sequence assertion is contained in Theorem \ref{open-orbit-su}.
\end{proof}

These considerations show that the $G$--translates of
$Y$ contained in $D$ correspond to decompositions
$V = W' \oplus W''$ where (i) $W'$ and $W''$ are maximal $b$--isotropic
subspaces of $V$, (ii) $W'$ is a maximal $h$--positive definite subspace 
such that $W' \cap V_+$ has finite codimension in both $W'$ and $V_+$\,, and
(iii) $W''$ is a maximal $h$--negative definite subspace such that
$W'' \cap V_-$ has finite codimension in both $W''$ and $V_-$\,.  
If $\cF^{(1)} = \cF^{(1)} \cap V_+$ the correspondence depends only on $W'$, and if
$\cF^{(1)} = \cF^{(1)} \cap V_-$ it depends only on $W''$.
Any two such decompositions $V = W' \oplus W''$ are $G$--equivalent.

\begin{definition}{\rm The {\sl positive bounded symmetric domain}
$\cB^+_E$ associated to $(V,b,E)$ is the space of all maximal 
$b$--isotropic $h$--positive definite subspaces $W' \subset V$ such that 
$W'\cap V_+$ has finite codimension in both $W'$ and $V_+$\,.  The {\sl negative
bounded symmetric domain} $\cB^-_E$ associated to $(V,b,E)$ is the
space of all maximal 
$b$--isotropic $h$--negative definite subspaces $W'' \subset V$ such that
$W'' \cap V_-$ has finite codimension in both $W''$ and $V_-$\,.}
\end{definition}

As constructed, each element $W' \in \cB^+_E$ is in the $G$--orbit of $V_+$\,.
Relative to the basis $E$ we look at $g = \left ( \begin{smallmatrix}
A & B \\ C & D \end{smallmatrix}\right ) \in G$ such that $gW' \in
\cB^+_E$, in other words such that the column span of $\left (
\begin{smallmatrix} A \\ C  \end{smallmatrix}\right )$ is $h$--positive 
definite.  That span is $b$--isotropic by definition of $G$, and the
column span is preserved under right multiplication by $A$.  Let 
$Z_1 = CA^{-1}$.  Then the $h$--positive definite condition is 
$\left ( \begin{smallmatrix} I \\ Z_1 \end{smallmatrix}\right )^*\cdot 
\left ( \begin{smallmatrix} I \\ Z_1 \end{smallmatrix}\right ) \ggg 0$, in
other words $I - Z_1^*Z_1 \ggg 0$.  Let $Z_1 = \left ( z_{i,j}\right )$ where
$i,j > 0$.  The column span of 
$\left ( \begin{smallmatrix} I \\ Z_1 \end{smallmatrix}\right )$ has basis
consisting of the $z_j := e_{-j} + \sum_{i > 0} z_{i,j}e_i$\,,  Compute 
$b(z_j,z_\ell) = z_{j,\ell} - z_{\ell,j}$.  So the $b$--isotropic condition is
$Z_1 = \tr{Z_1}$\,.
In other words $gW' \in
\cB^+_E$ simply means that $gW'$ is the column span of an infinite
matrix $\left ( \begin{smallmatrix} I \\ Z_1  \end{smallmatrix}\right )$
such that $I - Z_1^*Z_1 \ggg 0$ and $Z_1$ is symmetric.
\smallskip

Similarly $gW'' \in \cB^-_E$ simply means that $gW''$ is the column span 
of an infinite
matrix $\left ( \begin{smallmatrix} Z_2 \\ I  \end{smallmatrix}\right )$
such that $I - Z_2Z_2^* \ggg 0$ and $Z_2$ is symmetric.  The distinction 
is that the $G$--stabilizer of $V_+ \in \cB^+_E$ is the parabolic $P$ 
consisting of all $\left (
\begin{smallmatrix} A & B \\ 0 & D  \end{smallmatrix}\right )$ in $\gg$
while the $G$--stabilizer of $V_- \in \cB^-_E$ is the opposite parabolic
$\tr P = P^{opp}$ consisting of all 
$\left ( \begin{smallmatrix} A & 0 \\ C & D \end{smallmatrix}\right )$ in
$\gg_\C$\,.  Thus they have conjugate complex structures:
$\cB^+_E$ has holomorphic tangent space represented by the matrices
$\left ( \begin{smallmatrix} 0 & 0 \\ C & 0 \end{smallmatrix}\right )$ 
with $C$ symmetric while the holomorphic tangent space of $\cB^-_E$ is 
represented by the
$\left ( \begin{smallmatrix} 0 & B \\ 0 & 0 \end{smallmatrix}\right )$ 
with $B$ symmetric.
\smallskip

Reformulating this,
\begin{lemma} \label{bded-dom-sp}
Let $G_0 = Sp(\infty;\R)$.  Then the
positive bounded symmetric domain associated to
the triple $(V,G_0,E)$ is
$\cB^+_E \cong \{Z_1 \in \C^{\infty \times \infty} \mid I - Z_1^*Z_1 \ggg 0
\text{ and } Z_1 = {\tr{Z_1}}\}$ in
$G/P$,  and the negative bounded symmetric domain for
$(V,G_0,E)$ is the complex conjugate domain
$\cB^-_E \cong \{Z_2 \in \C^{\infty \times \infty} \mid I - Z_2^*Z_2 \ggg 0
\text{ and } Z_1 = {\tr{Z_1}}\}$ in  $G/P^{opp}$.
\end{lemma}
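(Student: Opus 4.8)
The statement is a reformulation of the block computation carried out just before it, so the plan is to turn that computation into an explicit biholomorphism. First I would fix the affine chart on $G/P$. Writing $g = \left(\begin{smallmatrix} A & B \\ C & D\end{smallmatrix}\right) \in G$ in block form relative to the splitting $V = V_+ \oplus V_-$ of (\ref{sp-splitting}), the translate $gV_+$ is the column span of $\left(\begin{smallmatrix} A \\ C\end{smallmatrix}\right)$, and I want $Z_1 := CA^{-1}$ to be a well defined coordinate on $\cB^+_E$, independent of the representative $g$ modulo the stabilizer $P$ of $V_+$. The point to check is that $A$ is invertible on all of $\cB^+_E$: since $gV_+$ is $h$--positive definite and the kernel of $\pi_+$ is $V_-$, which is $h$--negative definite, we get $gV_+ \cap V_- = 0$, so $\pi_+|_{gV_+}$ is injective and hence $A$ is injective; as every $g \in G$ is finitary, $A = I + (\text{finite rank})$ is then invertible and $Z_1 = CA^{-1}$ is finitary. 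This identifies $\cB^+_E$ with a subset of the big affine cell of $G/P$, on which $Z_1$ is a global holomorphic coordinate.

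Next I would transcribe the two defining conditions of $\cB^+_E$ into conditions on $Z_1$, which is exactly the content already computed. Normalizing $gV_+$ as the column span of $\left(\begin{smallmatrix} I \\ Z_1\end{smallmatrix}\right)$ and using that $h$ equals $+I$ on $V_+$ and $-I$ on $V_-$, $h$--positive definiteness becomes $I - Z_1^* Z_1 \ggg 0$; writing the spanning vectors $z_j = e_{-j} + \sum_{i>0} z_{i,j} e_i$ and computing $b(z_j, z_\ell) = z_{j,\ell} - z_{\ell,j}$, $b$--isotropy becomes $Z_1 = \tr{Z_1}$. For surjectivity I would run the construction backwards: given any finitary symmetric $Z_1$ with $I - Z_1^* Z_1 \ggg 0$, the column span of $\left(\begin{smallmatrix} I \\ Z_1\end{smallmatrix}\right)$ is a maximal $b$--isotropic, $h$--positive definite subspace $W'$ whose intersection with $V_+$ has finite codimension in both $W'$ and $V_+$ (precisely because $Z_1$ is finitary), so $W' \in \cB^+_E$. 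This produces the inverse map; holomorphy of both directions is visible from the identification of the holomorphic tangent space of $G/P$ at $V_+$ with the lower off--diagonal blocks $\left(\begin{smallmatrix} 0 & 0 \\ C & 0\end{smallmatrix}\right)$, $C$ symmetric, already recorded above.

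The negative domain is obtained by repeating the computation relative to $V_-$, whose $G$--stabilizer is the opposite parabolic $\tr P = P^{opp}$. Here a translate $gV_-$ is the column span of $\left(\begin{smallmatrix} Z_2 \\ I\end{smallmatrix}\right)$ with $Z_2 := BD^{-1}$ (invertibility of $D$ following from the same intersection argument with the roles of $V_+$ and $V_-$ reversed), the same two computations give $I - Z_2^* Z_2 \ggg 0$ and $Z_2 = \tr{Z_2}$, and $Z_2$ is a global holomorphic coordinate on the big cell of $G/P^{opp}$; this is the asserted description of $\cB^-_E$. Finally, to see that $\cB^-_E$ is the complex conjugate of $\cB^+_E$ I would use $h$--orthocomplementation $W \mapsto W^{\perp_h}$: it is conjugate--linear, hence antiholomorphic, and it carries maximal $h$--positive definite $b$--isotropic subspaces to maximal $h$--negative definite $b$--isotropic ones, interchanging $\cB^+_E$ and $\cB^-_E$ and matching the conjugate complex structures recorded through the two off--diagonal block models of the tangent spaces, consistently with the realization in Proposition \ref{bded-realization-sp}.

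The only step needing genuine care is the interface between the finite--codimension requirement in the definition of $\cB^\pm_E$ and the (implicitly finitary) matrix domain: one must verify that ``$W' \cap V_+$ has finite codimension in both $W'$ and $V_+$'' is equivalent to $Z_1$ being finitary, and then carry that finiteness consistently through the invertibility of $A$ and through the surjectivity argument. Everything else is a direct transcription of the block computation preceding the statement, so no further obstacle arises.
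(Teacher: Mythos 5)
Your proposal is correct and takes essentially the same route as the paper: the paper's ``proof'' is precisely the block computation preceding the lemma (the chart $Z_1 = CA^{-1}$ on the $G$--orbit of $V_+$\,, $h$--positivity transcribing to $I - Z_1^*Z_1 \ggg 0$, the identity $b(z_j,z_\ell) = z_{j,\ell} - z_{\ell,j}$ forcing $Z_1 = \tr{Z_1}$, and $\cB^-_E$ handled symmetrically through the opposite parabolic $P^{opp}$), of which the lemma is an explicit reformulation. The points you add --- invertibility of $A$ via $gV_+ \cap V_- = 0$ together with finitariness, the equivalence of the finite--codimension condition with $Z_1$ being finitary, and antiholomorphy of $h$--orthocomplementation (which the paper itself invokes at the end of the proof of Theorem \ref{main-su}) --- are details the paper leaves implicit, and your condition $Z_2 = \tr{Z_2}$ silently corrects the evident typo ($Z_1 = \tr{Z_1}$) in the printed statement of the lemma.
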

The action of $G_0$ on these bounded symmetric domains is described in
Section \ref{ssec4b}.
\smallskip

In any $K_0$--invariant Riemannian metric on $\widetilde{Y}$, $Y_1$ and $Y_2$
are the factors in the de Rham decomposition. The spaces 
$k(F_\ell^{(1)} \cap V_+)$ of the elements of $Y_1$ generate $V_+$ 
(or are zero), so either $Y$ determines $Y_1$ determines $V_+$\,, or 
the $F_\ell^{(1)} \cap V_+ = 0$. Similarly either $Y$ determines $V_-$\,, or 
the $F_\ell^{(1)} \cap V_- = 0$.
Now apply $g^{-1}$ whenever  $g \in G\{Y\}$ to see that $gY$ determines 
$gV_+$ or $gV_-$ or both, as appropriate.  Exactly as in the proof of
Theorem \ref{main-su} we arrive at the following structure theorem. 

\begin{theorem}\label{main-sp}
Let $G_0 = Sp(\infty;\R)$ and let $D$ be an open 
$G_0$--orbit $G(\cF^{(1)})$ in $\cZ_{\cF,E}$.  In the notation of
{\rm (\ref{sp-splitting})}, the positive definite bounded symmetric domain 
$\cB^+_E$ for $(V,G_0,E)$ is the set of
all positive definite $G$-translates of $V_+$ and the negative definite 
bounded symmetric domain $\cB^-_E$ for $(V,G_0,E)$ is the set of is the
set of all negative definite $G$-translates of $V_-$\,.  The
$\cB^\pm_E$ are antiholomorphically diffeomorphic, in other words each is 
the complex conjugate of the other.  There are three
cases for the structure of the cycle space, as follows.  

If every $F^{(1)}_k \in \cF^{(1)}$ is positive definite then $\cM_D$
is holomorphically diffeomorphic to $\cB^+_E$\,.

If every $F^{(1)}_k \in \cF^{(1)}$ is negative definite then $\cM_D$
is holomorphically diffeomorphic to $\cB^-_E$\,.

If some $F^{(1)}_k \in \cF^{(1)}$ is indefinite then $\cM_D$ is
holomorphically diffeomorphic to $\cB^+_E \times \cB^-_E$\,.
\end{theorem}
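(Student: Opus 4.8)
The plan is to run the proof of Theorem \ref{main-su} essentially verbatim, feeding in the symplectic analogues established above: Lemma \ref{Y-splits-sp} in place of the product splitting (\ref{Y-splits}), and Lemma \ref{bded-dom-sp} in place of Lemma \ref{bded-dom-su}. The first step is to record the dictionary between definiteness and membership in the bounded domains. Because $V_+$ and $V_-$ are maximal $b$--isotropic and $G = Sp(\infty;\C)$ preserves $b$, every translate $gV_+$ (resp. $gV_-$) is again maximal $b$--isotropic; hence, by Lemma \ref{bded-dom-sp}, $gV_+ \in \cB^+_E$ if and only if $gV_+$ is $h$--positive definite, and $gV_- \in \cB^-_E$ if and only if $gV_-$ is $h$--negative definite, the finite--codimension conditions holding automatically for $G$--translates by weak compatibility. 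I would also reuse the observation recorded just before the statement: in any $K_0$--invariant metric on $\widetilde{Y}$ the factors $Y_1, Y_2$ are the de Rham factors, and the spaces $k(F^{(1)}_\ell \cap V_\pm)$ either span $V_\pm$ or vanish, so the cycle $gY$ determines $gV_+$ (unless every positive part is zero) and determines $gV_-$ (unless every negative part is zero).

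With this in hand the three cases proceed exactly as in Theorem \ref{main-su}. If every $F^{(1)}_k$ is positive definite, then $\cF^{(1)} = \cF^{(1)}\cap V_+$ and $\Span gY = gV_+$; for the forward inclusion, if $gV_+$ is positive definite then $gk(\cF^{(1)})$ is nondegenerate for each $k \in K_0$, and since $D$ is the unique open $G_0$--orbit consisting of positive definite flags (Theorem \ref{open-orbit-su}) we get $gY \subset D$, while conversely $gY \subset D$ forces $gV_+ = \Span gY$ to be positive definite. Thus $gY \in \cM'_D$ if and only if $gV_+ \in \cB^+_E$, and $gY \mapsto gV_+$ identifies $\cM'_D$ with $\cB^+_E$; the negative definite case is symmetric and gives $\cB^-_E$. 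In the indefinite case $\cF^{(1)}\cap V_+ \ne \cF^{(1)} \ne \cF^{(1)}\cap V_-$, so $gY$ determines both $gV_+$ and $gV_-$, and the same orbit--uniqueness argument applied to each sign shows $gY \subset D$ precisely when $gV_+$ is positive definite and $gV_-$ is negative definite, i.e. $(gV_+, gV_-) \in \cB^+_E \times \cB^-_E$. In every case $\cM'_D$ is connected, so $\cM'_D = \cM_D$, and $h$--orthocomplementation is an antiholomorphic diffeomorphism carrying $\cB^+_E$ onto $\cB^-_E$.

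The step I expect to be delicate --- and where this proof genuinely differs from the $SU$ case --- is the indefinite case, namely the surjectivity of $gY \mapsto (gV_+, gV_-)$ onto the full product $\cB^+_E \times \cB^-_E$. Here the base cycle $Y$ is only a subvariety of $Y_1 \times Y_2$, cut out by the coupling $b(k_1 W_+, k_2 W_-) = 0$ of Lemma \ref{Y-splits-sp}, so one must verify that this $b$--constraint is intrinsic to each realized pair and does not shrink the cycle space below the full product. Concretely I would observe that $gV_+ \oplus gV_- = V$ for every translate, so that any prescribed pair $(W',W'') \in \cB^+_E \times \cB^-_E$ is itself a decomposition of $V$ into complementary maximal $b$--isotropic subspaces, one $h$--positive and one $h$--negative; such decompositions are all $G$--equivalent (as asserted in the paragraph preceding the definition of $\cB^\pm_E$), a transitivity that reduces to the corresponding finite--dimensional symplectic statement along the exhaustion, as in Lemma \ref{wk-compat}. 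Granting this, a single $g$ realizes $(W',W'') = (gV_+, gV_-)$, and the product identification --- together with the conjugate complex structures on $\cB^\pm_E$ from Lemma \ref{bded-dom-sp} --- completes the argument.
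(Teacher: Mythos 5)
Your proposal is correct and takes essentially the same route as the paper, which proves Theorem \ref{main-sp} exactly this way: the de Rham--factor observation that $gY$ determines $gV_+$ and/or $gV_-$, followed by the argument of Theorem \ref{main-su} run verbatim with Lemma \ref{Y-splits-sp} and Lemma \ref{bded-dom-sp} substituted in. The surjectivity issue you flag in the indefinite case is settled in the paper by the very assertion you invoke --- that any two decompositions $V = W' \oplus W''$ into complementary maximal $b$--isotropic, $h$--definite subspaces of the appropriate signs are $G$--equivalent --- so your extra care there just makes explicit what the paper leaves implicit.
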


\subsection{Cycle Spaces for $SO^*(\infty)$.}\label{ss6d}
\setcounter{equation}{0}
The case $SO^*(\infty) = SO(\infty;\C) \cap U(\infty,\infty)$ is very similar
to the case of $Sp(\infty;\R)$.  The main difference is that the bilinear
form $b$ is symmetric rather than antisymmetric.  Concretely, we have
$$
\begin{aligned}
&E = \{\dots , e_{-k}, e_{-k+1}, \dots , e_{-1}; e_1, \dots , e_{k-1}, e_k ,
        \dots \}\text{, ordered basis of } V; \\
&b(e_i,e_j) = \delta_{i+j,0}\,, h(e_i,e_j) = \delta_{i,j} \text{ for } i < 0
	\text{ and } h(e_i,e_j) = -\delta_{i,j} \text{ for } i > 0.
\end{aligned}
$$
Again we use the $h$--orthogonal splitting
\begin{equation}\label{so-splitting}
V = V_+ \oplus V_- \text{ where }
        V_+ = \Span\{\dots , e_{-3}, e_{-2}, e_{-1}\} \text{ and }
        V_- = \Span\{e_1, e_2, e_3, \dots \}.
\end{equation}
The maximal lim-compact subgroup of $U(\infty,\infty)$ is
$U(V_+) \times U(V_-) = U(\infty) \times U(\infty)$.  Exactly as in
the $Sp(\infty;\R)$ case, $K_0$ is the subgroup
$G_0 \cap \bigl ( U(\infty) \times U(\infty) \bigr ) \cong U(\infty)$.
In the ordered basis $\{e_{-1}, e_{-2}, \dots ; e_1, e_2, \dots\}$ it
would be diagonally embedded in $U(\infty) \times U(\infty)$.
\smallskip

Let $\cF$ be a $b$--isotropic generalized flag compatible with $E$.  Let
$\cF^{(1)} \in \cZ_{\cF,E}$ such that $D = G_0(\cF^{(1)})$ is open in
$\cZ_{\cF,E}$.  We may assume that $\cF$ is compatible with $E$, so
$$
\cF^{(1)}=(F_k^{(1)}) \text{ where each } F^{(1)}_k = (F^{(1)}_k \cap V_+)\oplus (F^{(1)}_k \cap V_-).
$$
In particular $\cF^{(1)}$ is $h$--nondegenerate and $K_0(\cF^{(1)})$ is the unique
$K_0$--orbit in $D$ that is a complex submanifold of $\cZ_{\cF,E}$\,.
With no nontrivial change, the proof of Lemma \ref{Y-splits-sp} also proves
\begin{lemma}\label{Y-splits-so}
Define $\cF^{(1)} \cap V_+ = (F^{(1)}_k \cap V_+)$ and 
$\cF^{(1)}\cap V_-=(F^{(1)}_k \cap V_-)$, and spaces
$W_+ = \bigcup_k (F^{(1)}_k \cap V_+)$ and $W_- = \bigcup_k (F^{(1)}_k \cap V_-)$.
Then the complex lim--compact group orbit $Y = K_0(\cF^{(1)})$ is the subvariety
of
$$
\begin{aligned}
\widetilde{Y} = Y_1 \times Y_2 \text{ where }
   &Y_1 = K_0(\cF^{(1)} \cap V_+) = U(\infty)(\cF^{(1)}\cap V_+) \text{ in }
        V_+ \text{ and }\\
   &Y_2 = K_0(\cF^{(1)} \cap V_-) = U(q)(\cF^{(1)}\cap V_-) \text{ in } V_-
\end{aligned}
$$
defined by $b(k_1W_+,k_2W_-) = 0$ for $k_1, k_2 \in K_0$\,.
The signature sequence $\{(a_k,b_k)\}$, and $($where relevant -- see
{\rm Remark \ref{orientation}}$)$ the orientation, 
specifies the open orbit in $\cZ_{\cF,E}$ and the factors of $\widetilde{Y}$.
\end{lemma}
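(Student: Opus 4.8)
The plan is to follow the proof of Lemma \ref{Y-splits-sp} almost verbatim, changing only the auxiliary involution to reflect that $b$ is now symmetric, and then to attend to the orientation phenomenon of Remark \ref{orientation}, which is the one feature with no counterpart in the antisymmetric ($Sp$) case. First I would record that the coordinate projections $r_1\colon K_0 \to U(V_+)$ and $r_2\colon K_0 \to U(V_-)$ are isomorphisms, since $K_0 = G_0 \cap (U(\infty)\times U(\infty))$ sits diagonally. Then I would introduce the swap $\mu\colon V \to V$ adapted to the symmetric form, namely $\mu(e_i) = e_{-i}$ and $\mu(e_{-i}) = e_i$ for $i > 0$; this replaces the sign-twisted map used for antisymmetric $b$, and one checks it is a $b$--isometry because $b(e_i,e_{-i}) = 1$ for all $i$. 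Its role is unchanged: it matches the $r_1(K_0)$--action on $V_+$--pieces with the $r_2(K_0)$--action on the corresponding $V_-$--pieces.

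The key structural input is the same as before. Because $\cF^{(1)}$ is $b$--isotropic and compatible with $E$, each $F^{(1)}_k$ is spanned by a subset $S_k \subset E$ containing no dual pair $\{e_i,e_{-i}\}$, since $b(e_i,e_{-i}) = 1 \ne 0$ would otherwise violate isotropy. Consequently $(F^{(1)}_k \cap V_+) + \mu(F^{(1)}_k \cap V_+)$ is nondegenerate for both $b$ and $h$ and is $b$-- and $h$--orthogonal to $(F^{(1)}_k \cap V_-) + \mu(F^{(1)}_k \cap V_-)$. Hence the $r_1(K_0)$-- and $r_2(K_0)$--actions move disjoint coordinate blocks indexed by disjoint subsets of $S_k \cup (-S_k)$, which gives the inclusion $Y \subset \widetilde{Y}$ together with the orthogonality relation $b(k_1 W_+, k_2 W_-) = 0$ for all $k_1, k_2 \in K_0$.

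For the reverse inclusion I would argue, exactly as in Lemma \ref{Y-splits-sp}, that once $k W_+ \perp_b k W_-$ is imposed, $K_0$ moves $(F^{(1)}_k \cap V_+)$ freely inside $V_+ \cap (W_-)^\perp$ and $(F^{(1)}_k \cap V_-)$ freely inside $V_- \cap (W_+)^\perp$. Any pair in $\widetilde{Y}$ satisfying the $b$--orthogonality condition therefore arises from a single diagonal $k \in K_0$, so it lies in $Y$. This identifies $Y$ as the subvariety of $\widetilde{Y}$ cut out by $b(k_1 W_+, k_2 W_-) = 0$, as claimed.

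The step I expect to require genuine care, and the only real departure from the $Sp$ proof, is the final assertion about what data pin down the open orbit and the factors of $\widetilde{Y}$. The symmetric form introduces the orientation ambiguity of Remark \ref{orientation}: when some $F^{(1)}_k$ is a maximal isotropic $L$ with $\dim L^\perp/L = 2$, signature alone does not determine the flag, and one must also record which of the two maximal isotropics $M_1,M_2 \supset L$ is selected. The remaining work is to verify that this orientation is precisely the extra discrete invariant needed, \emph{beyond} the signature sequence $\{(a_k,b_k)\}$ furnished by Theorem \ref{open-orbit-su}, to specify both $D$ and the factors $Y_1,Y_2$. I expect this to reduce to the finite-dimensional bookkeeping of Remark \ref{orientation} applied level by level along the exhaustion, with $\mu$ guaranteeing that the chosen maximal isotropics on the $V_+$-- and $V_-$--sides are matched compatibly.
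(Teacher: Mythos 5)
Your proposal is correct and takes essentially the same route as the paper, whose entire proof is the remark that the argument for Lemma \ref{Y-splits-sp} goes through with no nontrivial change; your replacement of the sign--twisted $\mu$ by the plain swap $\mu(e_{\pm i}) = e_{\mp i}$ is a harmless (indeed cleaner) refinement, since the argument uses $\mu$ only through the spans $(F^{(1)}_k \cap V_\pm) + \mu(F^{(1)}_k \cap V_\pm)$, which are unchanged by signs on basis vectors. Your explicit treatment of the orientation invariant of Remark \ref{orientation} goes slightly beyond what the paper writes down, but it is exactly the discrete bookkeeping the lemma's final assertion presupposes.
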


Now the $G$--translates of
$Y$ contained in $D$ correspond to decompositions
$V = W' \oplus W''$ where (i) $W'$ and $W''$ are maximal $b$--isotropic
subspaces of $V$, (ii) $W'$ is a maximal $h$--positive definite subspace
such that $W' \cap V_+$ has finite codimension in both $W'$ and $V_+$\,, and
(iii) $W''$ is a maximal $h$--negative definite subspace such that
$W'' \cap V_-$ has finite codimension in both $W''$ and $V_-$\,.
If $\cF^{(1)} = \cF^{(1)} \cap V_+$ the correspondence depends only on $W'$, and if
$\cF^{(1)} = \cF^{(1)} \cap V_-$ it depends only on $W''$.
Any two such decompositions $V = W' \oplus W''$ are $G$--equivalent.

\begin{definition}{\rm The {\sl positive bounded symmetric domain}
$\cB^+_E$ associated to $(V,b,E)$ consists of all maximal
$b$--isotropic $h$--positive definite subspaces $W' \subset V$ such that
$W'\cap V_+$ has finite codimension in both $W'$ and $V_+$\,.  The {\sl negative
bounded symmetric domain} $\cB^-_E$ associated to $(V,b,E)$ consists of 
all maximal
$b$--isotropic $h$--negative definite subspaces $W'' \subset V$ such that
$W'' \cap V_-$ has finite codimension in both $W''$ and $V_-$\,.}
\end{definition}

Let  $g = \left ( \begin{smallmatrix}
A & B \\ C & D \end{smallmatrix}\right ) \in G$ relative
to $E$, such that $gW' \in \cB^+_E$\,.  Computing as for $Sp(\infty;\R)$, 
with $Z_1 = CA^{-1}$, we see that $gW' \in \cB^+_E$ if and only if
$gW'$ is the column span of an infinite
matrix $\left ( \begin{smallmatrix} I \\ Z_1  \end{smallmatrix}\right )$
such that $I - Z_1^*Z_1 \ggg 0$ and $Z_1$ is antisymmetric.
Similarly $gW'' \in \cB^-_E$ if and only if $gW''$ is the column span
of an infinite
matrix $\left ( \begin{smallmatrix} Z_2 \\ I  \end{smallmatrix}\right )$
such that $I - Z_2Z_2^* \ggg 0$ and $Z_2$ is antisymmetric.  
The $G$--stabilizer of $V_+ \in \cB^+_E$ is the parabolic $P$
consisting of all $\left (
\begin{smallmatrix} A & B \\ 0 & D  \end{smallmatrix}\right )$ in $\gg_\C$,
and the $G$--stabilizer of $V_- \in \cB^-_E$ is the opposite parabolic
$\tr P = P^{opp}$ consisting of all
$\left ( \begin{smallmatrix} A & 0 \\ C & D \end{smallmatrix}\right )$ in
$\gg_\C$\,.  Thus they have conjugate complex structures:
$\cB^+_E$ has holomorphic tangent space represented by the matrices
$\left ( \begin{smallmatrix} 0 & 0 \\ C & 0 \end{smallmatrix}\right )$ 
with $C$ antisymmetric while the holomorphic tangent space of $\cB^-_E$ 
is represented by the matrices $\left ( \begin{smallmatrix} 0 & B \\ 0 & 0 
\end{smallmatrix}\right )$ with $B$ antisymmetric.
\smallskip

Reformulating this,
\begin{lemma} \label{bded-dom-so}
Let $G_0 = SO^*(\infty)$.  Then the
positive bounded symmetric domain associated to
the triple $(V,G_0,E)$ is
$\cB^+_E \cong \{Z_1 \in \C^{\infty \times \infty} \mid I - Z_1^*Z_1 \ggg 0
\text{ and } Z_1 + {\tr{Z_1}} = 0\}$ in
$G/P$,  and the negative bounded symmetric domain for
$(V,G_0,E)$ is the complex conjugate domain
$\cB^-_E \cong \{Z_2 \in \C^{\infty q\times \infty} \mid I - Z_2^*Z_2 \ggg 0
\text{ and } Z_1 + {\tr{Z_1}} = 0\}$ in  $G/P^{opp}$.
\end{lemma}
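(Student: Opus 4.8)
The plan is to package the column-span computation immediately preceding the lemma as the single assertion that $gP \mapsto Z_1 := CA^{-1}$ is a biholomorphism of the open subset $\cB^+_E \subset G/P$ onto the stated matrix domain. First I would recall that $\cB^+_E$ is a single $G$-orbit — the orbit of $V_+$ cut down by the open $h$-positivity condition — and that the $G$-stabilizer of $V_+$ is the parabolic $P$; hence the orbit map identifies $G/P$ with $G\cdot V_+$ and realizes $\cB^+_E$ as an open subset. Writing $g = \left ( \begin{smallmatrix} A & B \\ C & D \end{smallmatrix} \right )$, I would note that $h$-positive definiteness of the column span of $\left ( \begin{smallmatrix} A \\ C \end{smallmatrix} \right )$ forces $A$ invertible (a kernel vector of $A$ would produce a non-positive vector in $gV_+$), so every element of $\cB^+_E$ lies in the big cell and is the normalized column span of $\left ( \begin{smallmatrix} I \\ Z_1 \end{smallmatrix} \right )$ with $Z_1 = CA^{-1}$; this $Z_1$ depends only on the coset $gP$, since right multiplication by $P$ merely rescales columns.

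Next I would read off the two defining conditions exactly as in the $Sp(\infty;\R)$ computation of Lemma \ref{bded-dom-sp}, now with the symmetric $b$ of $SO^*(\infty)$. On the basis $z_j = e_{-j} + \sum_{i>0} z_{i,j}e_i$ of the column span one computes $b(z_j,z_\ell) = z_{j,\ell} + z_{\ell,j}$, so the $b$-isotropy forced by $G = SO(\infty;\C)$ is equivalent to $Z_1 + \tr{Z_1} = 0$; this identifies the big cell of $G/P$ with the finitary antisymmetric matrices. The $h$-positivity condition, as already computed, is $I - Z_1^*Z_1 \ggg 0$. Together these show $gP \mapsto Z_1$ is a bijection of $\cB^+_E$ onto $\{Z_1 \in \C^{\infty\times\infty} \mid I - Z_1^*Z_1 \ggg 0,\ Z_1 + \tr{Z_1} = 0\}$, the operator positivity reducing to a finite block inequality because $Z_1$ is finitary.

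To upgrade the bijection to a biholomorphism I would match complex structures at the base point $Z_1 = 0$: the holomorphic tangent space of $G/P$ there is represented by $\left ( \begin{smallmatrix} 0 & 0 \\ C & 0 \end{smallmatrix} \right )$ with $C$ antisymmetric, which is precisely the tangent space of the antisymmetric matrix domain at $0$, while $G$ acts holomorphically by the linear-fractional formula of Section \ref{ssec4c}; checking this on each $V_\ell$ in the exhaustion $V = \bigcup V_\ell$ and passing to the direct limit yields holomorphy on the ind-manifold. The negative case is then purely formal: $h$-orthocomplementation $W \mapsto W^\perp$ is an antiholomorphic diffeomorphism carrying $V_+$ to $V_-$, $P$ to $P^{opp}$, and $\cB^+_E$ to $\cB^-_E$, so $\cB^-_E$ is the complex conjugate domain, parametrized by $\left ( \begin{smallmatrix} Z_2 \\ I \end{smallmatrix} \right )$ with $I - Z_2Z_2^* \ggg 0$ and $Z_2$ antisymmetric. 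Since the substance is already in place from the preceding paragraph, no step is a genuine obstacle; the only point demanding care is the holomorphy-and-conjugacy bookkeeping — in particular verifying that orthocomplementation really interchanges $P$ and $P^{opp}$ and hence conjugates the two complex structures.
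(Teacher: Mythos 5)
Your proposal is correct and takes essentially the same route as the paper, whose proof of this lemma is precisely the computation you describe: normalize the column span to $\left( \begin{smallmatrix} I \\ Z_1 \end{smallmatrix} \right)$ with $Z_1 = CA^{-1}$, read off $h$--positivity as $I - Z_1^*Z_1 \ggg 0$, compute $b(z_j,z_\ell) = z_{j,\ell} + z_{\ell,j}$ so that $b$--isotropy becomes antisymmetry of $Z_1$, and identify the stabilizers $P$ and $P^{opp}$ with conjugate complex structures (the lemma is introduced by ``Reformulating this'' and ``Computing as for $Sp(\infty;\R)$''). Your extra bookkeeping --- invertibility of $A$, well--definedness of $Z_1$ on cosets $gP$, and the tangent--space/orthocomplementation verification of (anti)holomorphy --- merely makes explicit steps the paper leaves implicit.
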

The action of $G_0$ on these bounded symmetric domains is described in
Section \ref{ssec4c}.
\smallskip

Arguing just as for Theorems \ref{main-su} and \ref{main-sp}, we arrive 
at the following structure theorem.

\begin{theorem}\label{main-so}
Let $G_0 = SO^*(\infty)$ and let $D$ be an open
$G_0$--orbit $G(\cF^{(1)})$ in $\cZ_{\cF,E}$.  In the notation of
{\rm (\ref{so-splitting})}, the positive definite bounded symmetric domain
$\cB^+_E$ for $(V,G_0,E)$ is the set of
all positive definite $G$-translates of $V_+$ and the negative definite
bounded symmetric domain $\cB^-_E$ for $(V,G_0,E)$ is the set of is the
set of all negative definite $G$-translates of $V_-$\,.  The
$\cB^\pm_E$ are antiholomorphically diffeomorphic, in other words each is
the complex conjugate of the other.  There are three
cases for the structure of the cycle space, as follows.

If every $F^{(1)}_k \in \cF^{(1)}$ is positive definite then $\cM_D$
is holomorphically diffeomorphic to $\cB^+_E$\,.

If every $F^{(1)}_k \in \cF^{(1)}$ is negative definite then $\cM_D$
is holomorphically diffeomorphic to $\cB^-_E$\,.

If some $F^{(1)}_k \in \cF^{(1)}$ is indefinite then $\cM_D$ is
holomorphically diffeomorphic to $\cB^+_E \times \cB^-_E$\,.
\end{theorem}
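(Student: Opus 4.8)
The plan is to run the argument of Theorem \ref{main-su} essentially verbatim, with the two adjustments forced by the $b$--isotropy of the flags: the base cycle $Y$ is now only a \emph{subvariety} of the product $Y_1 \times Y_2$ (Lemma \ref{Y-splits-so}), and the coordinate $Z_1$ of $\cB^+_E$ is antisymmetric rather than unconstrained (Lemma \ref{bded-dom-so}). The second of these never enters the orbit bookkeeping, so the real work is to confirm that the two structural facts underlying the $SU$ proof survive in this setting.

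First I would record the dictionary between definiteness and domain membership. A $G$--translate $gV_+$ is automatically maximal $b$--isotropic (since $V_+ = V_+^{\perp_b}$), so by the definition of $\cB^+_E$ and Lemma \ref{bded-dom-so} one has $gV_+ \in \cB^+_E$ exactly when $gV_+$ is $h$--positive definite; symmetrically $gV_- \in \cB^-_E$ exactly when $gV_-$ is $h$--negative definite. Consequently $(gV_+,gV_-) \in \cB^+_E \times \cB^-_E$ if and only if $gV_+$ is positive definite and $gV_-$ is negative definite.

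Next I would show that $gY$ recovers $gV_+$ and/or $gV_-$, as appropriate. Equipping $\widetilde Y = Y_1 \times Y_2$ with a $K_0$--invariant metric, $Y_1$ and $Y_2$ are its de Rham factors and hence intrinsic to $Y$. For each nonzero $F^{(1)}_\ell \cap V_+$ the union of the subspaces $k(F^{(1)}_\ell \cap V_+)$, $k \in K_0$, spans $V_+$, because $K_0 \cong U(\infty)$ admits no proper nonzero finitary invariant subspace of $V_+$; thus $Y$ determines $V_+$ unless every $F^{(1)}_\ell \cap V_+ = 0$, and symmetrically for $V_-$. Translating by $g^{-1}$ for $g \in G\{Y\}$, $gY$ determines $gV_+$ or $gV_-$ or both. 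With this in hand the three cases separate exactly as for Theorem \ref{main-su}: in the all--positive case $\cF^{(1)} = \cF^{(1)} \cap V_+$ and, by Theorem \ref{open-orbit-su} together with the fact that $D$ is the unique open orbit consisting of positive definite flags, $gY \subset D$ if and only if $gV_+$ is positive definite, i.e.\ if and only if $gV_+ \in \cB^+_E$, giving $\cM'_D \cong \cB^+_E$; the all--negative case gives $\cB^-_E$; and the indefinite case recovers both factors and gives $\cM'_D \cong \cB^+_E \times \cB^-_E$. In each case $\cM'_D$ is connected, so $\cM_D = \cM'_D$, and $h$--orthocomplementation provides the antiholomorphic diffeomorphism interchanging $\cB^+_E$ and $\cB^-_E$.

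The step I expect to be the main obstacle is the factor--recovery in the indefinite case, precisely because $Y \subsetneqq Y_1 \times Y_2$ here: one must argue that the intrinsic (de Rham) geometry of $Y$ still isolates $Y_1$ and $Y_2$, and that the spanning property $\Span \bigcup_{k \in K_0} k(F^{(1)}_\ell \cap V_+) = V_+$ is not spoiled in passing to the direct limit, so that weak compatibility of $\cF^{(1)}$ with $E$ cannot let the generated space fall short of $V_+$. Once this is secured the remainder is formally identical to the $SU$ and $Sp$ arguments, the antisymmetry of $Z_1$ affecting only the shape of $\cB^\pm_E$ in Lemma \ref{bded-dom-so} and not the orbit correspondence.
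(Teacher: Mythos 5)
Your proposal is correct and takes essentially the same approach as the paper, which proves Theorem \ref{main-so} by combining Lemmas \ref{Y-splits-so} and \ref{bded-dom-so} and then ``arguing just as for Theorems \ref{main-su} and \ref{main-sp}'' --- in particular, the factor--recovery step you flag as the main obstacle (with $Y \subsetneqq Y_1 \times Y_2$) is resolved in the paper by exactly the de Rham decomposition and spanning argument you propose, carried over verbatim from the $Sp(\infty;\R)$ case.
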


\subsection{Cycle Spaces for $SO(\infty,2)$.}\label{ss6e}
\setcounter{equation}{0}
Now we come to the rather delicate case $G_0 = SO(\infty,2)$, where
the lim--compact dual of the complex bounded symmetric domain is a
nondegenerate quadric in a complex projective space.  We specify $G_0$
by the basis (\ref{so2-basis}) and the forms (\ref{so2-forms}).
Let $$
\begin{aligned}
&V_{even} = 
\Span(\{e_{2k} + \sqrt{-1}e_{2k+1} \mid k < 0\}\cup\{e_1 - \sqrt{-1}e_2\}),\\
&V_{odd} = 
\Span(\{e_{2k} - \sqrt{-1}e_{2k+1} \mid k < 0\}\cup\{e_1 + \sqrt{-1}e_2\}).
\end{aligned}
$$
They are maximal $b$--isotropic subspaces of $V$,
paired by $b(e_j + \sqrt{-1}e_{j+1}, e_j - \sqrt{-1}e_{j+1}) = 2$.
This basis $E$ leads to the same splitting of $V$ as the one based on 
(\ref{su-basis}):

\begin{equation}\label{splitso2}
V = V_+ \oplus V_- \text{ where }
	V_+ = \Span\{\dots , e_{-3}, e_{-2}, e_{-1}\}  \text{ and }
        V_- = \Span\{e_1, e_2\}. 
\end{equation}
We denote
$$
\cP^\infty \text{ is the projective space } \cP(V) \text{ and }
\cZ \text{ is the quadric } b(v,v) = 0 \text{ in } \cP^\infty\,.
$$
The maximal lim-compact subgroup of $G_0$ is
$K_0 = SO(V_+) \times SO(V_-) = SO(\infty) \times SO(2)$.  The
complex $K_0$--orbits within the open $G_0$--orbits on $\cZ$ (from Lemma 
\ref{three-open} and (\ref{so2-orbits}) are 
\begin{equation}\label{so2-orbits-cpt}
\begin{aligned}
&\text{in } D_0 = G_0([e_1 + \sqrt{-1}e_2]): \phantom{Xi}
  K_0([e_1 + \sqrt{-1}e_2]) = \text{ (single point $[e_1 + \sqrt{-1}e_2]$),} \\
&\text{in } D_1 = G_0([e_1 - \sqrt{-1}e_2]): \phantom{Xi}
  K_0([e_1 - \sqrt{-1}e_2]) = \text{ (single point $[e_1 - \sqrt{-1}e_2]$),} \\
&\text{in } D_2 = G_0([e_{-2} + \sqrt{-1}e_{-1}]): 
  K_0([e_{-2} + \sqrt{-1}e_{-1}]) = \cZ \cap \cP(V_+) \text{ quadric in } \cP(V_+).
\end{aligned}
\end{equation}

\begin{definition}{\rm The {\sl positive bounded symmetric domain}
$\cB^+_{E'}$ associated to $(V,b,E')$ consists of all maximal
$b$--isotropic $h$--positive definite subspaces $W' \subset V$ such that
$W'\cap V_+$ has finite codimension in both $W'$ and $V_+$\,.  Those subspaces 
have codimension $2$ in $V$.  The {\sl negative bounded symmetric domain} 
$\cB^-_{E'}$ associated to $(V,b,E')$ consists of all maximal
$b$--isotropic $h$--negative definite subspaces $W'' \subset V$.
(Since $\dim W'' = 2 = \dim V_-$ the finite codimension condition is
automatic.)}
\end{definition}

Now more generally let $\cF = (F_k)$ be an isotropic generalized flag in $V$
that is weakly compatible with $E'$.  Let $\cF^{(1)} \in \cZ_{\cF,E'}$ for which
$D = G_0(\cF^{(1)})$ is an open $G_0$--orbit.  We may assume that
$\cF^{(1)}$ is compatible with our choice of $E'$, so it fits the decomposition
(\ref{so-splitting}) as before:
\begin{equation}\label{so-splitting1}
\cF^{(1)} = (F^{(1)}_k) \text{ where each }
        F^{(1)}_k = (F^{(1)}_k \cap V_+) \oplus (F^{(1)}_k \cap V_-).
\end{equation}
Then $K_0(\cF^{(1)})$ is the unique $K_0$--orbit in
$D$ that is a complex submanifold of the flag manifold $\cZ_{\cF,E'}$\,.
Somewhat trivially, $K_0(\cF^{(1)})$ is the product of ``smaller'' complex
flag manifolds,
\begin{equation} \label{Y-splits-so2}
\begin{aligned}
Y = Y_1 \times Y_2 &\text{ where }\\
   &Y_1 = K_0(\cF^{(1)} \cap V_+) = SO(\infty)(\cF^{(1)}\cap V_+) \text{ in } V_+ \text{ and }\\
   &Y_2 = K_0(\cF^{(1)} \cap V_-) = SO(2)(\cF^{(1)}\cap V_-) \text{ in } V_-
\end{aligned}
\end{equation}
where $\cF^{(1)} \cap V_+ = ((F^{(1)}_k \cap V_+))$ and
$\cF^{(1)}\cap V_-=((F^{(1)}_1 \cap V_-))$.
The signature sequence $\{(a_k,b_k)\}$, where $h$ has
signature $(a_k,b_k,0)$ on $F^{(1)}_k$, specifies the open orbit in $\cZ_{\cF,E'}$
and the factors of $Y$.
\smallskip

If $\cF^{(1)} = \cF^{(1)}\cap V_+$, in other words $D = D_2$ and the cycles are of the
form $K_0(gV_+)$ with $g \in G$, then $M_D$ consists of the maximal
$b$--isotropic $h$--positive definite subspaces of $V$.
If $\cF^{(1)} = \cF^{(1)}\cap V_-$, in other words $D = D_0$ or $D = D_1$ and the
cycles are single points, then then $M_D$ consists of the maximal
$b$--isotropic $h$--negative definite subspaces of $V$.  If 
$\cF^{(1)}\cap V_+ \ne \cF^{(1)} \ne \cF^{(1)}\cap V_-$ then $M_D$ is the product.  Thus

\begin{lemma} \label{bded-dom-so2}
Let $G_0 = SO(\infty,2)$.  Then the
positive bounded symmetric domain associated to
the triple $(V,G_0,E')$ is
$\cB^+_{E'} \cong \{Z \in \C^\infty \mid 1 + |\tr{Z}Z|^2 -2Z^*Z > 0
\text{ and } Z^*Z < 1 \}$ in
$G/P$,  and the negative bounded symmetric domain for
$(V,G_0,E')$ is the complex conjugate domain
$\cB^-_{E'} \cong \{Z \in \C^\infty \mid 1 + |\tr{Z}Z|^2 -2Z^*Z > 0
\text{ and } Z^*Z > 1 \}$ in $G/P^{opp}$.
\end{lemma}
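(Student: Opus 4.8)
The plan is to reduce both assertions to the Harish--Chandra computation already carried out in Proposition \ref{realiz-so2inf}, using the identification of the cycle space with $\cB^\pm_{E'}$ established in the paragraph preceding the lemma. Recall from that paragraph that for the positive orbit $D_2$ the base cycle is the quadric $\cZ \cap \cP(V_+)$ and $\cM_{D_2} = \cB^+_{E'}$, while for the two negative orbits $D_0$ and $D_1$ the base cycles are single points, so that $\cM_{D_0}$ and $\cM_{D_1}$ are governed by $\cB^-_{E'}$. Thus it suffices to realize the two families of maximal $b$--isotropic, $h$--definite subspaces in affine coordinates on $G/P^{opp}$ and $G/P$ respectively, and to read off the defining inequalities.

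First I would treat $\cB^-_{E'}$. Its elements are maximal $b$--isotropic $h$--negative definite subspaces, and since the corresponding cycles are single points the parameter is a vector $Z \in \C^\infty \cong \gm^+$, with the line written as $\xi(Z)$ in the normalization of Proposition \ref{realiz-so2inf}. Because $\xi(Z)$ lies on the quadric $\cZ$ by construction, the line $\xi(Z)$ is automatically $b$--isotropic, so the only remaining requirement is $h$--negative definiteness, i.e. $h(\xi(Z),\xi(Z)) < 0$. Proposition \ref{realiz-so2inf} already evaluated this to $1 + |\tr Z\, Z|^2 - 2Z^*Z > 0$ and split the locus into two components; fixing the $G/P^{opp}$ affine chart selects the component $Z^*Z > 1$, which is the asserted description of $\cB^-_{E'}$.

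Next I would produce $\cB^+_{E'}$ from $\cB^-_{E'}$ by $h$--orthocomplementation $W \mapsto W^{\perp_h}$, which carries maximal $b$--isotropic $h$--negative definite subspaces to maximal $b$--isotropic $h$--positive definite ones and is antiholomorphic. Carried across the two charts $G/P$ and $G/P^{opp}$, which are interchanged by the partial Cayley transforms (\ref{cayleysoinf2}), this involution sends the unbounded region $Z^*Z > 1$ to the bounded region $Z^*Z < 1$ while preserving $1 + |\tr Z\, Z|^2 - 2Z^*Z > 0$; this yields the description of $\cB^+_{E'}$ in $G/P$ and simultaneously the claim that $\cB^\pm_{E'}$ are mutually complex conjugate. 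That each domain is connected, so that it coincides with the cycle space (the topological component of the base cycle), follows as in Proposition \ref{realiz-so2inf} from Witt's theorem applied to the finite dimensional quadrics $\cZ_n$.

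The step I expect to be the main obstacle is the passage through the chart in the second paragraph: unlike the Grassmannian cases of Sections \ref{ssec4a}--\ref{ssec4c}, here $\cZ$ is a quadric, so one must verify that the isotropy and definiteness conditions genuinely collapse to the two scalar inequalities of Proposition \ref{realiz-so2inf} -- in particular that the quadratic form $q(Z) = \tr Z\, Z$ enters only through $|\tr Z\, Z|^2$ -- and one must correctly match each chart component ($Z^*Z > 1$ versus $Z^*Z < 1$) to $G/P^{opp}$ versus $G/P$. Once this normalization and matching are pinned down, the orthocomplement symmetry and the connectedness argument are routine.
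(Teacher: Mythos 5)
Your treatment of $\cB^-_{E'}$ is essentially the paper's: the paper's entire proof of this lemma is the ``Thus'' closing the preceding paragraph, which identifies $\cM_D$ for each open orbit with the appropriate family of $b$--isotropic $h$--definite subspaces and then reads \emph{both} coordinate descriptions directly off the Harish--Chandra computation of Section \ref{ssec4d} --- the bounded component $Z^*Z<1$ from Proposition \ref{realiz-so2inf} attached to the positive family (the translates $gV_+$ carrying the cycles of $D_2$), and the unbounded component $Z^*Z>1$ from the displayed formula for $\xi^{-1}(D_1)$ --- with Witt's theorem on the finite dimensional quadrics $\cZ_n$ settling the components. In particular the paper obtains $\cB^+_{E'}$ directly from the same chart computation; it does not pass from $\cB^-_{E'}$ to $\cB^+_{E'}$ by any duality.

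That passage is the genuine gap in your proposal: $h$--orthocomplementation does \emph{not} carry maximal $b$--isotropic $h$--negative definite subspaces to maximal $b$--isotropic $h$--positive definite ones here, because the two families have incompatible dimensions. An element of $\cB^-_{E'}$ is a line: no $2$--dimensional $h$--negative $b$--isotropic subspace exists, since such a $W''$ would be a graph $\{x + Zx \mid x \in V_-\}$ with $b(Zx,Zx) = -b(x,x)$ (as $b(V_+,V_-)=0$), while every $u = \sum_{j<0} u_j e_j \in V_+$ satisfies $|b(u,u)| = |\sum u_j^2| \leqq \sum |u_j|^2 = h(u,u)$; taking $x = e_1$ forces $h(Ze_1,Ze_1) \geqq 1$, contradicting $h(e_1 + Ze_1, e_1+Ze_1) < 0$. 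The $h$--orthocomplement of a line is a hyperplane, which is $h$--positive definite but never $b$--isotropic, whereas the positive objects (the codimension--$2$ translates $gV_+$, or the half--dimensional isotropic subspaces inside them) are not hyperplanes. This is precisely the asymmetry the paper flags at the start of Section \ref{ssec4d}: the quadric case does not inherit the duality of Sections \ref{ssec4a}--\ref{ssec4c}, where $V_+$ and $V_-$ are maximal definite (and, for $Sp(\infty;\R)$ and $SO^*(\infty)$, maximal isotropic) subspaces of complementary type and orthocomplementation genuinely swaps them. The antiholomorphic relation between the two models comes instead from complex conjugation of $V$ over $\Span_\R(E)$, which interchanges $D_0$ and $D_1$ (see the discussion following Proposition \ref{action-soinf2}); your Cayley mechanism also does not substitute for it, since $c_1^2c_2^2 \in G$ acts holomorphically and does not normalize $G_0$, so it moves $z_0$ to $z_1$ without mapping $D_0$ onto $D_1$. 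You correctly sensed that the chart matching was the delicate point, but the failure occurs one step earlier, in the orthocomplementation bridge itself; the repair is simply to argue the positive case directly from Proposition \ref{realiz-so2inf}, as the paper does.
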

The action of $G_0$ on these bounded symmetric domains is described in
Section \ref{ssec4d}.
\smallskip

The argument for Theorem \ref{main-su} remains valid here, with one
small modification.  Recall Lemma \ref{three-open} and (\ref{so2-orbits}).
There is just one open orbit $D_2 = G_0([e_{-1}+ \sqrt{-1}\,e_{-2}])$
consisting of $h$--positive definite subspaces, but there are two orbits,
$D_0 = G_0([e_1 + \sqrt{-1}\,e_2])$ and $D_1 = G_0([e_1 - \sqrt{-1}\,e_2])$, 
consisting of negative definite subspaces.  These last two are related by 
complex conjugation of $V$ over the real span of $E$.  Suppose that $D$ is 
either $D_0$ or $D_2$\,, that $\cF^{(1)} = \cF^{(1)} \cap V_-$\,, and that $gV_-$ 
is negative definite.  Then $gY \subset (D_0 \cup D_1)$. As $gY$ is connected,
either $gY \subset D_0$ or $gY \subset D_1$\,.  Thus $gY \in \cM'_D$\,,
and $gY \in \cM_D$ just when $gY \subset D$.  With this adjustment the
proof of Theorem \ref{main-su} holds here, and the result is 

\begin{theorem}\label{main-so2}
Let $G_0 = SO(\infty,2)$ and let $D$ be an open
$G_0$--orbit $G(\cF^{(1)})$ in $\cZ_{\cF,E'}$.  In the notation of
{\rm (\ref{so-splitting})}, the positive definite bounded symmetric domain
$\cB^+_{E'}$ for $(V,G_0,E')$ is the set of
all positive definite $G$-translates of $V_+$ and the negative definite
bounded symmetric domain $\cB^-_E$ for $(V,G_0,E')$ is the set of is the
set of all negative definite $G$-translates of $V_-$\,.  The
$\cB^\pm_{E'}$ are antiholomorphically diffeomorphic, in other words each is
the complex conjugate of the other.  There are three
cases for the structure of the cycle space, as follows.

If every $F^{(1)}_k \in \cF^{(1)}$ is positive definite then $\cM_D$
is holomorphically diffeomorphic to $\cB^+_{E'}$\,.

If every $F^{(1)}_k \in \cF^{(1)}$ is negative definite then $\cM_D$
is holomorphically diffeomorphic to $\cB^-_{E'}$\,.

If some $F^{(1)}_k \in \cF^{(1)}$ is indefinite then $\cM_D$ is
holomorphically diffeomorphic to $\cB^+_{E'} \times \cB^-_{E'}$\,.
\end{theorem}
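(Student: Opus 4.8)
The plan is to transcribe the proof of Theorem \ref{main-su} into the present quadric setting, using the splitting $V = V_+ \oplus V_-$ of (\ref{splitso2}), the product decomposition $Y = Y_1 \times Y_2$ of the base cycle recorded in (\ref{Y-splits-so2}), and the realizations of $\cB^{\pm}_{E'}$ from Lemma \ref{bded-dom-so2}. The one input that must be re-examined, rather than merely copied, is the passage from definiteness of a $G$--translate to membership in a \emph{single} open orbit, and this is exactly where the geometry of $SO(\infty,2)$ departs from that of $SU(\infty,q)$. First I would record the basic dictionary, immediate from the definitions of $\cB^{\pm}_{E'}$: for $g \in G$ the subspace $gV_+$ is $h$--positive definite precisely when $gV_+ \in \cB^+_{E'}$, the subspace $gV_-$ is $h$--negative definite precisely when $gV_- \in \cB^-_{E'}$, and both hold simultaneously precisely when $(gV_+,gV_-) \in \cB^+_{E'} \times \cB^-_{E'}$.

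For the case in which every $F^{(1)}_k$ is positive definite I would invoke Lemma \ref{three-open}: among the open $G_0$--orbits on $\cZ$ there is a unique one, namely $D_2$, consisting of $h$--positive definite subspaces. Hence the argument of Theorem \ref{main-su} carries over without change. If $gV_+$ is positive definite then $gk(\cF^{(1)})$ is nondegenerate and positive definite for every $k \in K_0$ (recall $kV_+ = V_+$), so it must lie in the unique positive open orbit $D$, giving $gY \subset D$; conversely $gY \subset D$ forces $\Span gY = gV_+$ to be positive definite, using that the $SO(V_+)$--factor of $K_0$ sweeps $\cF^{(1)} \cap V_+$ across $V_+$. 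Thus $gY \mapsto gV_+$ identifies $\cM'_D$ with $\cB^+_{E'}$, and since $\cB^+_{E'}$ is connected we conclude $\cM_D = \cM'_D \cong \cB^+_{E'}$.

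The negative definite case is the step I expect to be the main obstacle, for two compounding reasons. First, Lemma \ref{three-open} furnishes not one but two open negative definite orbits, $D_0$ and $D_1$, interchanged by the complex conjugation of $V$ over the real span of $E'$; so negative definiteness of $gV_-$ only places $gk(\cF^{(1)})$ in $D_0 \cup D_1$, not in $D$. Second, the span argument used on the positive side degenerates here: the relevant factor of $K_0$ is $SO(2)$, which fixes the negative isotropic line $\cF^{(1)} \cap V_-$ rather than sweeping out $V_-$, so that the base cycle $Y$ collapses to a single point as in (\ref{so2-orbits-cpt}) and cannot be used to recover $gV_-$ by spanning. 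The remedy is connectedness: $gY$ is connected, hence lies entirely in $D_0$ or entirely in $D_1$, so the condition $gY \subset D$ isolates exactly those negative definite translates that land in the chosen orbit. Passing to the topological component $\cM_D$ of $Y$ in $\cM'_D$ then selects one such connected family, and since $\cB^-_{E'}$ is connected---by Lemma \ref{bded-dom-so2} it is realized as a star-shaped domain---this family is carried biholomorphically onto $\cB^-_{E'}$, yielding $\cM_D \cong \cB^-_{E'}$.

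Finally, when some $F^{(1)}_k$ is indefinite one has $\cF^{(1)} \cap V_+ \ne \cF^{(1)} \ne \cF^{(1)} \cap V_-$, and the factorization (\ref{Y-splits-so2}) shows that $gY \subset D$ holds if and only if $gV_+$ is positive definite and $gV_-$ is negative definite; by the dictionary above this is exactly $(gV_+,gV_-) \in \cB^+_{E'} \times \cB^-_{E'}$, a connected set, so $\cM_D \cong \cB^+_{E'} \times \cB^-_{E'}$. The antiholomorphic diffeomorphism $\cB^+_{E'} \cong \overline{\cB^-_{E'}}$ then follows because $h$--orthocomplementation is antiholomorphic and interchanges positive-- and negative--definite translates, which completes all three cases.
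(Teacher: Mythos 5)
Your proposal is correct and follows essentially the same route as the paper: the paper's proof also transcribes the argument of Theorem \ref{main-su}, with the single modification (via Lemma \ref{three-open} and (\ref{so2-orbits})) that in the negative definite case there are two open orbits $D_0$ and $D_1$, interchanged by conjugation, and it resolves this exactly as you do by noting that the connected set $gY$ must lie entirely in one of them, so that $gY$ belongs to the cycle space precisely when $gY \subset D$. Your additional observation that the spanning argument degenerates because $Y_2$ is the single $SO(2)$--fixed isotropic line is a correct sharpening of why the modification is needed, but it does not change the proof.
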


\section{Real and Quaternionic Domains and Cycle Spaces} \label{sec6}
\setcounter{equation}{0}
In Section \ref{sec4} we worked out the structure of finitary complex bounded 
symmetric domains, and in Section \ref{sec5} we applied those results to 
obtain the structure of cycle spaces on corresponding flag domains.  In this
section we develop a variation on those results for
particular real and quaternionic flag manifolds and cycle spaces
based on the groups $SO(\infty,q)$ and $Sp(\infty,q)$, $q \leqq \infty$. 
Those groups provide real and quaternionic analogs of the complex domains
of $SU(\infty,q)$.  The methods and results are similar to
those of Section \ref{ssec4a}, Section \ref{ssec5b}, and the last part
of \cite{SW1975}.

\subsection{The Real Bounded Symmetric Domain for $SO(\infty,q)$.}
\setcounter{equation}{0}
\label{ssec6a}
In Section \ref{ssec4a} we looked at the bounded domain of maximal
negative definite subspaces of $(V,h)$ contained in $\cZ_{\cF,E}$\,, 
where $V$ has basis $E$ given 
by (\ref{su-basis}) and where the hermitian form $h$ is given by 
(\ref{hform-su}).  We studied it as an $SU(\infty,q)$--orbit on
the complex Grassmann manifold of $q$--dimensional subspaces of $V$
weakly compatible with $E$.  Here we look at the real analog, the
(real -- not complex) bounded symmetric domain of maximal
negative definite subspaces of $(V_0,b)$ where $V_0$ is the real span
of $E$ and the symmetric bilinear form $b$ is the restriction of $h$
to $V_0$\,.  Then we use it to describe real cycle spaces for open 
orbits on the corresponding real flag manifolds.
\smallskip

We consider the real group $G_0 = SO(\infty,q)$, $q \leqq \infty$ and the flag
$\cF = (0,F,V_0)$ where $F = \Span_\R\{e_i \mid i > 0\}$.  View $G_0$ as
a closed subgroup of $G := SL(\infty + q; \R)$.
That gives us the {\em real} flag manifold 
\begin{equation}\label{real-grass}
\gX_{\cF,E} = \{\text{ subspaces } F^{(1)} \subset V_0
	\mid (0,F^{(1)},V) \text{ is } E\text{--commensurable to } \cF\}
	= G(\cF)
\end{equation}
where the second equality follows as in the argument of Lemma \ref{wk-compat}.
Note that $\gX_{\cF,E}$ is a real Grassmann manifold.  The domain of
interest to us in this context is
\begin{equation}\label{bdedsym-so}
D_0 = \{\cF^{(1)} = (0,F^{(1)},V_0) \in \gX_{\cF,E} \mid F^{(1)}
        \text{ maximal negative definite subspace of } V_0\}.
\end{equation}
If $\tau: V \to V$ denotes complex conjugation of $V$ over $V_0$ then the
domain $D_0$ of (\ref{bdedsym-so}) can be identified with the fixed point
set of $\tau$ on the complex Grassmannian of Section \ref{ssec4a}.
\smallskip

We use the $b$--orthogonal decomposition
$V_0 = (V_0)_+ \oplus (V_0)_-$ where $(V_0)_+ = \Span_\R\{e_i \mid i < 0\}$
and $(V_0)_- = \Span_\R\{e_i \mid i > 0\}$.  Consider the corresponding
$b$--orthogonal projections $\pi_\pm$\,.  The kernel of $\pi_-$ is 
$b$--positive definite
so it has zero intersection with $F^{(1)}$ for any $\cF^{(1)} = 
(0,F^{(1)},V_0) \in D_0$.
Thus $\pi_-: F^{(1)} \cong (V_0)_-$ is injective, and it is surjective as
well because $F^{(1)}$ is a maximal negative definite subspace.  Now we 
have a well defined linear map
\begin{equation}\label{def-so-infq}
X_{F^{(1)}}: (V_0)_- \to (V_0)_+ \text{ defined by } \pi_-(x) \mapsto \pi_+(x)
        \text{ for } x \in F^{(1)}.
\end{equation}

As $\cF^{(1)}$ is weakly compatible with $E$, the matrix of $X_{F^{(1)}}$ 
relative to $E$ has only finitely many nonzero entries, i.e. $X_{F^{(1)}}$
is finitary.  Further, $\pi_-: F^{(1)} \cong (V_0)_-$ defines an $\R$--basis
$\{e''_i\}$ of $F^{(1)}$ by $\pi_-(e''_i)
= e_i$\,.  Write $e''_i = e_i + \sum_{j<0} x_{j,i}e_j$; then
$(x_{j,i})$ is the matrix of $X_{F^{(1)}}$.  The fact that $F^{(1)}$ is $b$--negative
definite, translates to the
matrix condition $I - {\tr (x_{j,i})}\,(x_{j,i}) \ggg 0$, equivalently the
operator condition $I - {\tr X_{F^{(1)}}}\, X_{F^{(1)}} \ggg 0$.  Conversely if
$X: (V_0)_- \to (V_0)_+$ is finitary and satisfies $I - {\tr X}\,X \ggg 0$,
then the real column span of its matrix relative to $E$ is a maximal negative
definite subspace $F^{(1)}$, and $\cF^{(1)} = (0,F^{(1)},V_0) \in D_0$\,.
\smallskip

The same computation as in Section \ref{ssec4a} shows that the
block form matrices of elements of $G_0$ act by
$\left ( \begin{smallmatrix} A & B \\ C & D \end{smallmatrix}\right ) :
\left ( \begin{smallmatrix} X \\ I \end{smallmatrix}\right ) \to
\left ( \begin{smallmatrix} AX+B \\ CX+D \end{smallmatrix}\right )$,
which has the same real column span as
$\left ( \begin{smallmatrix} (AX+B)(CX+D)^{-1} \\ I \end{smallmatrix}\right )$.
So $G_0$ acts by 
$\left ( \begin{smallmatrix} A & B \\ C & D \end{smallmatrix}\right ) : X
\to (AX+B)(CX+D)^{-1}$. In summary, 

\begin{proposition}\label{bded-realization-so-infq}
$D_0$ is realized as the bounded domain of all finitary
$X: (V_0)_- \to (V_0)_+$ such that $I - {\tr X}\, X \ggg 0$,  and there the 
action of 
$G_0$ is $\left ( \begin{smallmatrix} A & B \\ C & D \end{smallmatrix}\right ) 
: X \to (AX+B)(CX+D)^{-1}$.
\end{proposition}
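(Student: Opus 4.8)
The plan is to assemble the linear-algebra observations made in the paragraphs preceding the statement into a clean argument, exactly paralleling the complex case of Section \ref{ssec4a} (Proposition \ref{bded-realization-su}), with the Hermitian adjoint $*$ replaced throughout by the real transpose $\tr{}$, since on $V_0$ the form $h$ restricts to the real symmetric bilinear form $b$. First I would fix $\cF^{(1)} = (0,F^{(1)},V_0) \in D_0$ and record that the $b$--orthogonal projection $\pi_-$ restricts to a linear isomorphism $\pi_-: F^{(1)} \cong (V_0)_-$: it is injective because $\ker \pi_- = (V_0)_+$ is $b$--positive definite and hence meets the negative definite subspace $F^{(1)}$ only in $0$, and it is surjective because $F^{(1)}$ is a \emph{maximal} negative definite subspace. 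Composing the inverse of this isomorphism with $\pi_+$ produces the operator $X_{F^{(1)}}$ of (\ref{def-so-infq}); its finitariness is forced by weak $E$--compatibility of $\cF^{(1)}$, which makes the matrix $(x_{j,i})$ have only finitely many nonzero entries.

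Next I would translate the defining geometric condition into the stated inequality. Writing $e''_i = e_i + \sum_{j<0} x_{j,i} e_j$ for the basis of $F^{(1)}$ with $\pi_-(e''_i) = e_i$, negative definiteness of $b|_{F^{(1)}}$ is the statement $(b(e''_i,e''_\ell)) \ll 0$, which unwinds to $I - {\tr (x_{j,i})}(x_{j,i}) \ggg 0$, that is, $I - {\tr X_{F^{(1)}}}\, X_{F^{(1)}} \ggg 0$. For the converse I would start from any finitary $X$ with $I - {\tr X}\, X \ggg 0$, take the real column span of $\left(\begin{smallmatrix} X \\ I \end{smallmatrix}\right)$, and read the inequality backwards to see that this span is a maximal negative definite subspace, hence a point of $D_0$; this establishes the bijection asserted in the proposition.

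Finally I would verify the action formula. Realizing each point of $D_0$ as the real column span of $\left(\begin{smallmatrix} X \\ I \end{smallmatrix}\right)$ and applying a block matrix $g = \left(\begin{smallmatrix} A & B \\ C & D \end{smallmatrix}\right) \in G_0$ sends this span to the span of $\left(\begin{smallmatrix} AX+B \\ CX+D \end{smallmatrix}\right)$, which after right multiplication by $(CX+D)^{-1}$ is the span of $\left(\begin{smallmatrix} (AX+B)(CX+D)^{-1} \\ I \end{smallmatrix}\right)$, giving the linear--fractional formula. The one point that needs a word is the invertibility of $CX+D$: this is automatic because $g \in SO(\infty,q)$ preserves $b$, so $gF^{(1)}$ is again a maximal negative definite subspace, again graphical over $(V_0)_-$ via $\pi_-$, which is precisely the assertion that $CX+D$ is invertible and that the result again lies in $D_0$.

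I expect no serious obstacle, since the whole computation is the real restriction of the complex one already carried out in Proposition \ref{bded-realization-su}; the only thing demanding care is the bookkeeping in passing from the Hermitian adjoint to the transpose and confirming that surjectivity of $\pi_-|_{F^{(1)}}$ and invertibility of $CX+D$ genuinely follow from maximality of negative definite subspaces together with $G_0$--invariance of $b$, rather than from a finite--dimensional dimension count. Alternatively, and perhaps most cleanly, I could invoke the identification of $D_0$ with the $\tau$--fixed point set of the complex domain of Section \ref{ssec4a} and simply restrict Proposition \ref{bded-realization-su} to real $Z$, where $Z^* = {\tr Z}$, deducing the statement with essentially no further computation.
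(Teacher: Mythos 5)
Your proposal is correct and follows essentially the same route as the paper, whose proof of Proposition \ref{bded-realization-so-infq} consists precisely of the preceding linear algebra that you reproduce: the graph construction of $X_{F^{(1)}}$ via the isomorphism $\pi_-\colon F^{(1)} \cong (V_0)_-$, the translation of negative definiteness into $I - {\tr X}\,X \ggg 0$ together with its converse, and the column-span computation yielding the linear-fractional action. Your two supplements --- the explicit justification that $CX+D$ is invertible, and the alternative of restricting Proposition \ref{bded-realization-su} to the fixed-point set of $\tau$ --- are both consistent with remarks the paper itself makes and only strengthen the argument.
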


Again, there are $q+1$ open $G_0$--orbits on $\gX_{\cF,E}$ corresponding to
nondegenerate signatures:
$$
\begin{aligned}
D_k = G_0(0,F_{(k)},V_0) \text{ where } &F_{(k)} =
        {\Span_\R}\{e_{-k}, \dots , e_{-1}; e_{k+1}, \dots , e_q\}
                \text{ if } q < \infty, \\
&F_{(k)} = {\Span_\R}\{e_{-k}, \dots , e_{-1}; e_{k+1}, e_{k+2}, \dots\}
                \text{ if } q = \infty, \\
\end{aligned}
$$
More generally the $G_0$--orbits on $\gX_{\cF,E}$
of signature $(a,b,c) = (\pos,\neg,\nul)$ have $a$ and $c$ finite and
$\leqq q$.  We denote them by
\begin{equation}
\begin{aligned}
D_{a,b,c} &= G_0(0,(F_+ + F_- + F_0),V) \text{ where }\\
        F_0& = \Span_\R\{e_{-c}+e_c, \dots ,e_{-1}+e_1\} \text{ (null) }\\
        F_+& = \Span_\R\{e_{-c-a}, \dots , e_{-c-1}\} \text{ (positive)}\\
        F_-& = \Span_\R\{e_{c+1}, \dots , e_{c+b}\}, \,\,  q < \infty;\,\,
                \Span_\R\{e_{c+1},e_{c+2}, \dots\}, \,\, q = \infty
                        \text{ (negative)}.
\end{aligned}
\end{equation}
As in the complex case, the open orbits are the $D_a = D_{a,b,0}\,, 
a < \infty$ and $a+b = q$, i.e. the ones for $c = 0$.  If $q < \infty$ 
there is a unique closed orbit, 
$D_{0,0,q} = \{(0,F^{(1)},V_0) \in \gX_{\cF,E} \mid
b(F^{(1)},F^{(1)}) = 0\}$; it is in the
closure of every orbit. If $q = \infty$ there is no closed orbit.
\smallskip

The Cayley transforms are given by (\ref{cayley-su}): $c_k(e_j) = e_j$ if
$j \ne \pm k$ and, in the basis $\{e_{-k},e_k\}$ of $\Span_\R\{e_{-k},e_k\}$,
$c_k$ has matrix $\tfrac{1}{\sqrt{2}} \left ( \begin{smallmatrix}
1 & 1 \\ -1 & 1 \end{smallmatrix} \right )$. This sends real 
subspaces of $V$ to real subspaces; that is why, in Section \ref{ssec4a}, 
we based (\ref{cayley-su}) on the one variable Cayley transform that sends 
$0 \to 1 \to \infty \to -1 \to 0$ and maps the unit disk to the right half 
plane.  As a riemannian symmetric space, the real
Grassmannian $\gX_{\cF,E}$ has rank $q$.  Just as in the complex case
the $G_0$--orbits on $\gX_{\cF,E}$ are the $G_0(c_1\dots c_s c_{s+1}^2\dots
c_{s+t}^2\cF)$, and the open ones are those for which $s = 0$.  If $q < \infty$
then $G_0(c_1\dots c_q\cF)$ is the closed orbit, and if $q = \infty$ then there
is no closed $G_0$--orbit on $\gX_{\cF,E}$.
\smallskip

The maximal lim-compact subgroup of $G_0$ is
$$
\begin{aligned}
&K_0 = SO(\infty) \times SO(q) \,\,\,= \bigl ({\lim}_{p\to\infty} SO(p)\bigr ) 
	\times SO(q) \text{ if } q < \infty,\\
&K_0 = SO(\infty) \times SO(\infty) = {\lim}_{p,q\to\infty} 
	\bigl (SO(p) \times SO(q)\bigr ) \text{ if } q = \infty.
\end{aligned}
$$
This corresponds to the $b$--orthogonal decomposition
$\R^{\infty,q} \phantom{j}= (V_0)_+ \oplus (V_0)_-$\,.
Let $\cF = (F_k)$ be a generalized flag in $V = \R^{\infty,q}$
that is weakly compatible with $E$.  Let $\cF^{(1)} \in \gX_{\cF,E}$ so that
$D = G_0(\cF^{(1)})$ is an open $G_0$--orbit.  Then we may assume that
$\cF^{(1)}$ is compatible with our choice of $E$, so it fits the decomposition
$\R^{\infty,q} \phantom{j}= (V_0)_+ \oplus (V_0)_-$ in the sense that
\begin{equation}\label{so-splitting1-real}
\cF^{(1)} = (F^{(1)}_k) \text{ where each }
        F^{(1)}_k = (F^{(1)}_k \cap (V_0)_+) \oplus (F^{(1)}_k \cap (V_0)_-).
\end{equation}
Then $K_0(\cF^{(1)})$ is the real analog -- in fact a real form -- of  
the base cycle in the complexification of $D$.
Concretely, $K_0(\cF^{(1)})$ is the product of ``smaller'' real
flag manifolds,
\begin{equation} \label{quat-Y-splits}
\begin{aligned}
Y = Y_1 \times Y_2 &\text{ where }\\
   &Y_1 = K_0(\cF^{(1)} \cap (V_0)_+) = SO(\infty)(\cF^{(1)}\cap (V_0)_+) \text{ in } (V_0)_+ \text{ and }\\
   &Y_2 = K_0(\cF^{(1)} \cap (V_0)_-) = SO(q)(\cF^{(1)}\cap (V_0)_-) \text{ in } (V_0)_-\,.
\end{aligned}
\end{equation}
where 
$$
\begin{aligned}
&\cF^{(1)} \cap (V_0)_+ = ((F^{(1)}_1 \cap (V_0)_+)\subset\dots\subset 
	(F^{(1)}_n \cap (V_0)_+)), \\
&\cF^{(1)}\cap (V_0)_-=((F^{(1)}_1 \cap (V_0)_-)\subset\dots\subset (F^{(1)}_n \cap (V_0)_-)).
\end{aligned}
$$
The signature sequence $\{(a_k,b_k)\}$, where $h$ has
signature $(a_k,b_k,0)$ on $F^{(1)}_k$, specifies the open orbit in $\gX_{\cF,E}$
and the factors of $Y$.
\smallskip

This shows that the $G$--translates of
$Y$ contained in $D$ correspond to the decompositions
$V_0 = W_0' \oplus W_0''$ where (i) $W_0'$ is a maximal
positive definite subspace such that $(V_0)_+ \cap W_0'$ has finite codimension
in both $W_0'$ and $(V_0)_+$\,, and (ii) $W_0''$ is a maximal negative 
definite subspace such that $(V_0)_- \cap W_0''$ has finite codimension in 
both $W_0''$ and $(V_0)+$\,.
If $\cF^{(1)} = \cF^{(1)} \cap (V_0)_+$ the correspondence depends only on $W_0'$, and if
$\cF^{(1)} = \cF^{(1)} \cap (V_0)_-$ it depends only on $W_0''$.
Any two such decompositions $V_0 = W_0' \oplus W_0''$ are $G$--equivalent.
\smallskip

\begin{definition}{\rm The {\sl positive real bounded symmetric domain}
$\cB^+_E$ associated to $(V_0,b,E)$ is the space of all maximal positive
definite subspaces $W_0' \subset V_0$ such that $W_0'\cap (V_0)_+$ has finite
codimension in both $W_0'$ and $(V_0)_+$\,.  The {\sl negative
bounded symmetric domain} $\cB^-_E$ associated to $(V_0,b,E)$ is the
space of all maximal negative definite subspaces $W_0'' \subset V$ such that
$W_0'' \cap (V_0)_-$ has finite codimension in both $W_0''$
and $(V_0)_-$\,.}
\end{definition}

As constructed, each element $W_0' \in \cB^+_E$ is in the $G$--orbit of $(V_0)_+$.
Relative to the basis $E$ we look at $g = \left ( \begin{smallmatrix}
A & B \\ C & D \end{smallmatrix}\right ) \in G$ such that $gW_0' \in
\cB^+_E$, in other words such that the column span of $\left (
\begin{smallmatrix} A \\ C  \end{smallmatrix}\right )$ is positive definite.
The column span is preserved under right multiplication by $A$, so the
positive definite condition is 
$\tr{\bigl ( \begin{smallmatrix} I \\ -CA^{-1} \end{smallmatrix}\bigr )} \cdot
\bigl ( \begin{smallmatrix} I \\ CA^{-1} \end{smallmatrix}\bigr ) \ggg 0$.
In other words $gW_0' \in
\cB^+_E$ simply means that $gW_0'$ is the column span of an infinite real
matrix $\left ( \begin{smallmatrix} I \\ X_1  \end{smallmatrix}\right )$
such that $I - {\tr{X_1}}X_1 \ggg 0$.  Similarly $gW_0'' \in
\cB^-_E$ simply means that $gW_0''$ is the column span of an infinite real
matrix $\left ( \begin{smallmatrix} X_2 \\ I  \end{smallmatrix}\right )$
such that $I - {\tr{X_2}}X_2 \ggg 0$.  The $G$--stabilizer
of $0 \in \cB^+_E$ is the parabolic $P$ consisting of all $\left (
\begin{smallmatrix} A & B \\ 0 & D  \end{smallmatrix}\right )$,
while the $G$--stabilizer of $0 \in \cB^-_E$ is the opposite parabolic
$\tr P = P^{opp}$ consisting of all 
$\left ( \begin{smallmatrix} A & 0 \\ C & D
\end{smallmatrix}\right )$.  
Reformulating this,

\begin{lemma} \label{bded-dom-so-real}
Suppose that $G_0 = SO(\infty,q)$, $q \leqq \infty $.  Then the real
positive bounded symmetric domain associated to $(V,G_0,E)$ is
$\cB^+_E \cong \{X_1 \in \R^{\infty \times q} \mid I-{\tr{X_1}}X_1 \ggg 0\}$ in
$G/P$,  and the negative real bounded symmetric domain for
$(V,G_0,E)$ is 
$\cB^-_E \cong \{X_2 \in \R^{q\times \infty} \mid I - {\tr{X_2}}X_2 \ggg 0\}$
in  $G/P^{opp}$.
\end{lemma}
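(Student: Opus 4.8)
The plan is to read the lemma off directly from the matrix computation carried out in the two paragraphs preceding it, exactly as Lemma \ref{bded-dom-su} was obtained in the complex case; the only systematic change is that the conjugate-transpose $*$ is everywhere replaced by the ordinary transpose $\tr{}$, since $G_0 = SO(\infty,q)$ consists of real matrices and $b$ is the real symmetric form obtained by restricting $h$ to $V_0$.

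First I would record that the base point $(V_0)_+$ of $\cB^+_E$ has $G$--stabilizer equal to the block upper triangular parabolic $P$, so that $gP \mapsto g(V_0)_+$ identifies the orbit $G(V_0)_+$ with $G/P$; since every $W_0' \in \cB^+_E$ lies in this orbit, $\cB^+_E$ is exhibited as the positive-definite locus inside $G/P$. Writing $g = \left(\begin{smallmatrix} A & B \\ C & D \end{smallmatrix}\right)$ relative to $V_0 = (V_0)_+ \oplus (V_0)_-$ and putting $X_1 = CA^{-1}$, the translate $g(V_0)_+$ is the real column span of $\left(\begin{smallmatrix} I \\ X_1 \end{smallmatrix}\right)$, and positive-definiteness of $b$ on that span is precisely the operator inequality $I - {\tr{X_1}}X_1 \ggg 0$. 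This is the same graph parametrization used for $SU(\infty,q)$, with the Hermitian Gram matrix replaced by its real symmetric counterpart.

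Next I would match the two defining conditions of $\cB^+_E$ with the two matrix conditions: the requirement that $W_0'\cap(V_0)_+$ have finite codimension in both $W_0'$ and $(V_0)_+$ is exactly the statement that $X_1$ is finitary, which holds automatically because $G = SL(\infty+q;\R)$ is a finitary group; and positive-definiteness gives $I - {\tr{X_1}}X_1 \ggg 0$. This realizes $\cB^+_E$ as the asserted domain in $G/P$. The description of $\cB^-_E$ is obtained by the mirror argument: the base point $(V_0)_-$ has stabilizer the opposite parabolic $P^{opp}$, its translates are real column spans of $\left(\begin{smallmatrix} X_2 \\ I \end{smallmatrix}\right)$, and $b$--negative-definiteness becomes $I - {\tr{X_2}}X_2 \ggg 0$.

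Since the preceding text already performs every computation, the argument is essentially a transcription, and I expect the one point needing genuine care to be the verification that this set-theoretic parametrization is actually a diffeomorphism of real manifolds, i.e. that the finite-codimension condition corresponds bijectively and bicontinuously to finitariness of $X_1$ and $X_2$. As everywhere in Section \ref{sec5}, this is handled by passing to the exhaustion $G = \varinjlim G_\ell$ and invoking the corresponding finite-dimensional identifications, so that the realization is a direct limit of diffeomorphisms of finite-dimensional bounded domains.
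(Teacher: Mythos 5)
Your proposal is correct and follows essentially the same route as the paper: the paper also derives the lemma by noting that every $W_0' \in \cB^+_E$ lies in the $G$--orbit of $(V_0)_+$ with stabilizer the parabolic $P$ (resp.\ $P^{opp}$ for $\cB^-_E$), normalizing the column span of $\bigl(\begin{smallmatrix} A \\ C \end{smallmatrix}\bigr)$ by right multiplication to the graph form $\bigl(\begin{smallmatrix} I \\ X_1 \end{smallmatrix}\bigr)$ with $X_1 = CA^{-1}$, and translating $b$--definiteness into $I - {\tr{X_1}}X_1 \ggg 0$, the lemma itself being stated as a reformulation of that computation. Your closing remark on checking the identification via the exhaustion $G = \varinjlim G_\ell$ is a reasonable extra precaution but goes slightly beyond what the paper records, which relies on the finitariness already established for $X_{F^{(1)}}$ from weak compatibility with $E$.
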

The action of $G_0$ on these bounded symmetric domains is linear fractional,
as described in Section \ref{ssec4a} for the complex case.
The proof of Theorem \ref{main-su} is valid here, giving us
the following structure theorem.

\begin{theorem}\label{main-so-real}
Let $G_0 = SO(\infty,q)$ with $2 < q \leqq \infty$.  Let $D$ be an open 
$G_0$--orbit $G((0,F^{(1)},V_0`)$ in the real flag manifold $\gX_{\cF,E}$.  
Then the positive definite bounded symmetric domain 
$\cB^+_E$ for $(V,G_0,E)$ is the set of
all positive definite $G$-translates of $(V_0)_+$ and the negative definite 
bounded symmetric domain $\cB^-_E$ for $(V,G_0,E)$ is the set of is the
set of all negative definite $G$-translates of $(V_0)_-$\,.  The
$\cB^\pm_E$ are diffeomorphic. There are three
cases for the structure of the cycle space, as follows.  

If every space $F^{(1)}_k \in \cF^{(1)}$ is positive definite then $\cM_D$
is diffeomorphic to $\cB^+_E$\,.

If every space $F^{(1)}_k \in \cF^{(1)}$ is negative definite then $\cM_D$
is diffeomorphic to $\cB^-_E$\,.

If some space $F^{(1)}_k \in \cF^{(1)}$ is indefinite then $\cM_D$ is
diffeomorphic to $\cB^+_E \times \cB^-_E$\,.
\end{theorem}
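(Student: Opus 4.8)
The plan is to carry the proof of Theorem \ref{main-su} over essentially verbatim, replacing the hermitian form $h$ by the real symmetric bilinear form $b$, the complex spaces $V_\pm$ by their real analogs $(V_0)_\pm$, the group $SU(\infty,q)$ by $SO(\infty,q)$, and ``holomorphically diffeomorphic'' by ``diffeomorphic'' throughout. The starting point is the same dichotomy as in the $SU$ case: using Lemma \ref{bded-dom-so-real} together with the decomposition (\ref{so-splitting1-real}) and the splitting $Y = Y_1 \times Y_2$ of (\ref{quat-Y-splits}), one checks directly that $g(V_0)_+$ is $b$--positive definite if and only if $g(V_0)_+ \in \cB^+_E$, that $g(V_0)_-$ is $b$--negative definite if and only if $g(V_0)_- \in \cB^-_E$, and that both hold simultaneously if and only if $(g(V_0)_+, g(V_0)_-) \in \cB^+_E \times \cB^-_E$.

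First I would treat the case $\cF^{(1)} = \cF^{(1)} \cap (V_0)_+$. Here $Y = Y_1$ consists of positive definite subspaces, and $k(V_0)_+ = (V_0)_+$ for every $k \in K_0 = SO(\infty)\times SO(q)$ since $K_0$ preserves the splitting. If $g(V_0)_+$ is positive definite then each $gk(\cF^{(1)})$ is $b$--nondegenerate, hence lies in an open orbit (the open orbits being exactly the nondegenerate ones, i.e.\ those with nullity $c=0$, as recorded above); because $D$ is the open $G_0$--orbit consisting of positive definite subspaces we get $gk(\cF^{(1)}) \in D$, so $gY \subset D$ and $gY \in \cM'_D$. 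Conversely, if $gY \in \cM'_D$ then $gY \subset D$ consists of positive definite subspaces, and since $\Span\, K_0(F^{(1)}) = (V_0)_+$ for any nonzero $F^{(1)} \in \cF^{(1)}$ we obtain $\Span\, gY = g(V_0)_+$ positive definite. Thus $gY \in \cM'_D$ exactly when $g(V_0)_+ \in \cB^+_E$. The case $\cF^{(1)} = \cF^{(1)} \cap (V_0)_-$ is symmetric, and the genuinely mixed case $\cF^{(1)} \cap (V_0)_+ \ne \cF^{(1)} \ne \cF^{(1)} \cap (V_0)_-$ combines the two to identify $\cM'_D$ with $\cB^+_E \times \cB^-_E$. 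Finally, $b$--orthocomplementation interchanges $\cB^+_E$ and $\cB^-_E$; over $\R$ this is a plain diffeomorphism rather than an antiholomorphic one, which is why the statement asserts only that the $\cB^\pm_E$ are diffeomorphic.

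The real obstacle, and the reason for the hypothesis $2 < q$, is the uniqueness claim used in the forward direction above: that $D$ is the \emph{unique} open $G_0$--orbit consisting of positive definite subspaces (and likewise for negative). Over $\R$, working with $SO$ rather than $O$, real Grassmannians carry orientation data, and for small $q$ an orbit of definite subspaces can split into two components distinguished only by orientation, exactly the phenomenon of Remark \ref{orientation} and of the two open $SL(2m;\R)$--orbits discussed for Corollary \ref{one-open-sl}. The assumption $2 < q$ rules this out, so that definiteness alone pins down the open orbit and the forward implication goes through. Once the three identifications of $\cM'_D$ are in hand, connectedness is automatic: by Lemma \ref{bded-dom-so-real} each factor $\cB^\pm_E$ is realized as the star--shaped (indeed convex) region $\{X \mid I - {\tr X}\,X \ggg 0\}$ in a real matrix space, hence connected, and therefore $\cM'_D = \cM_D$. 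With these two points settled the $SU$ argument transfers without further change, completing the proof.
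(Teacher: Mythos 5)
Your proposal matches the paper's own treatment: the paper proves Theorem \ref{main-so-real} simply by remarking that ``the proof of Theorem \ref{main-su} is valid here,'' and the transfer you carry out --- replacing $h$ by $b$, $V_\pm$ by $(V_0)_\pm$, the three-case analysis via $\Span K_0(F^{(1)})$, and connectedness of $\cM'_D$ giving $\cM'_D = \cM_D$ --- is exactly that argument. One caveat: your explanation of the hypothesis $2 < q$ (possible orientation splitting of the definite open orbits, by analogy with Remark \ref{orientation} and the $SL(2m;\R)$ phenomenon) is not in the paper and appears doubtful, since by your own convexity observation the set of maximal definite subspaces is realized as the connected region $\{X \mid I - \tr{X}X \ggg 0\}$ for \emph{every} $q$, so it is a single open orbit regardless; this misattribution is harmless, however, because the theorem assumes $2 < q$ and your argument is valid in that range.
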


\subsection{The Quaternionic Bounded Symmetric Domain for $Sp(\infty,q)$.}
\setcounter{equation}{0}
\label{ssec6b}

We now look at the quaternionic analog of Section \ref{ssec6a}.
For that, we consider a quaternionic vector space $V_\H = \H^{\infty,q}$,  
one of whose underlying complex structures is that of $V = \C^{\infty,2q}$.
We look at the bounded symmetric domain of maximal negative definite
quaternionic subspaces of $(V_\H,h)$.   As suggested by Section \ref{ssec3c},
the complex basis $E$ of $V$ is replaced by an $\H$--basis
\begin{equation}
\begin{aligned}
L = &\{\dots , v_{-2}, v_{-1}; v_1, v_2, \dots , v_q\} \text{ for } q < \infty, \\
L = &\{\dots , v_{-2}, v_{-1}; v_1, v_2, v_3,\dots \} \text{ for } q = \infty.
\end{aligned}
\end{equation}
The relation with $E$ is $v_i = e_{2i}$ for $i < 0$ and 
$v_j = e_{2j-1}$ for $j > 0$.  The $\H$--hermitian form $h$ is
defined by $h(v_i,v_j) = \delta_{i,j}$ for $i < 0$ and $h(v_i,v_j) = 
-\delta_{i,j}$ for $i > 0$.
\smallskip

The real group is $G_0 = Sp(\infty,q)$, $q \leqq \infty$. We view
$G_0$ as a closed subgroup of the quaternionic linear 
group $G := SL(\infty + q; \H)$.  The flag
is $\cF = \{F\}$ where $F = \Span_\H\{e_i \mid i > 0\}$.
That gives us the {\em quaternionic} flag manifold 
\begin{equation}\label{quat-grass}
\gX_{\cF,L} = \{\text{subspaces } F^{(1)} \subset V_\H
	\mid (0,F^{(1)}, V_\H) \text{ is } L\text{--commensurable to } \cF\}
	= G(\cF)
\end{equation}
where the second equality follows as in the argument of Lemma \ref{wk-compat}.
Note that $\gX_{\cF,L}$ is a quaternionic Grassmann manifold.  The domain of
interest to us in this context is

\begin{equation}\label{bdedsym-sp}
D_0 = \{(0,F^{(1)},V_\H) \in \gX_{\cF,L} \mid F^{(1)}
        \text{ is a maximal $h$--negative definite subspace of } V_\H\}.
\end{equation}

Now consider the $h$--orthogonal decomposition
$V_\H = (V_\H)_+ \oplus (V_\H)_-$ where $(V_\H)_+$ denotes 
$\Span_\H\{e_i \mid i < 0\}$
and $(V_\H)_-$ denotes $\Span_\H\{e_i \mid i > 0\}$.  Consider the 
corresponding orthogonal 
projections $\pi_+: V_\H \to (V_\H)_+$ and $\pi_- : V_\H \to (V_\H)_-$\,, 
The kernel of $\pi_-$ is 
$h$--positive definite so it has zero intersection with $F^{(1)}$ for any 
$\cF^{(1)} = (0,F^{(1)},V_\H) \in D_0$.  Thus $\pi_-: F^{(1)} \cong (V_\H)_-$ 
is injective.  Since $F^{(1)}$ is a maximal $h$--negative definite subspace 
$\pi_-: F^{(1)} \cong (V_\H)_-$ is surjective as well.  Now we
have a well defined $\H$--linear map
\begin{equation}\label{def-sp-infq}
X_{F^{(1)}}: (V_\H)_- \to (V_\H)_+ \text{ defined by } \pi_-(x) \mapsto \pi_+(x)
        \text{ for } x \in F^{(1)}.
\end{equation}
As $\cF^{(1)}$ is weakly compatible with $L$, the matrix of $X_{F^{(1)}}$ 
relative to $L$ has only finitely many nonzero entries, i.e. $X_{F^{(1)}}$
is finitary.  Using $\pi_-: F^{(1)} \cong V_{\H,-}$ defines an $\H$--basis
$\{v''_i\}$ of $F^{(1)}$ by $\pi_-(v''_i)
= v_i$\,.  Write $v''_i = v_i + \sum_{j<0} x_{j,i}v_j$; then
$(x_{j,i})$ is the matrix of $X_{F^{(1)}}$.  The fact that $F^{(1)}$ is $h$--negative
definite, translates to the
matrix condition $I - (x_{j,i})^*\,(x_{j,i}) \ggg 0$, equivalently the
operator condition $I - X_{F^{(1)}}^*\, X_{F^{(1)}} \ggg 0$.  Conversely if
$X: (V_\H)_- \to (V_\H)_+$ is finitary and satisfies $I - X^*\,X \ggg 0$,
then the quaternionic column span of its matrix relative to $L$ is a maximal 
negative definite subspace $F^{(1)}$, and 
$\cF^{(1)} = (0,F^{(1)},V_\H) \in D_0$\,.
\smallskip

The same computation as in Section \ref{ssec4a} shows that the
block form matrices of elements of $G_0$ act by
$\left ( \begin{smallmatrix} A & B \\ C & D \end{smallmatrix}\right ) :
\left ( \begin{smallmatrix} X \\ I \end{smallmatrix}\right ) \to
\left ( \begin{smallmatrix} AX+B \\ CX+D \end{smallmatrix}\right )$,
which has the same quaternionic column span as
$\left ( \begin{smallmatrix} (AX+B)(CX+D)^{-1} \\ I \end{smallmatrix}\right )$.
So $G_0$ acts by the linear fractional
$\left ( \begin{smallmatrix} A & B \\ C & D \end{smallmatrix}\right ) : X
\to (AX+B)(CX+D)^{-1}$. In summary, 

\begin{proposition}\label{bded-realization-sp-infq}
$D_0$ is realized as the bounded domain of all finitary
$X: (V_\H)_- \to (V_\H)_+$ such that $I -  X^*\, X \ggg 0$,  and there the 
action of 
$G_0$ is $\left ( \begin{smallmatrix} A & B \\ C & D \end{smallmatrix}\right ) 
: X \to (AX+B)(CX+D)^{-1}$.
\end{proposition}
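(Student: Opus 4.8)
The plan is to read the proposition off from the construction already assembled in the preceding paragraphs, organizing it as two assertions: (a) a bijection $D_0 \leftrightarrow \{X : (V_\H)_- \to (V_\H)_+ \mid X \text{ finitary},\ I - X^*X \ggg 0\}$, and (b) the linear fractional formula for the $G_0$--action; throughout I would check that the noncommutativity of $\H$ does not disturb the argument used in the complex case of Proposition \ref{bded-realization-su}.

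For the bijection I would start from $\cF^{(1)} = (0,F^{(1)},V_\H) \in D_0$. The $h$--orthogonal projection $\pi_-$ restricts on $F^{(1)}$ to an $\H$--linear isomorphism onto $(V_\H)_-$: it is injective because $\ker \pi_- = (V_\H)_+$ is $h$--positive definite while $F^{(1)}$ is $h$--negative definite, and it is onto because $F^{(1)}$ is a maximal $h$--negative definite subspace. Composing with $\pi_+$ defines the finitary $\H$--linear map $X_{F^{(1)}}$, and in the induced $\H$--basis $\{v''_i\}$ of $F^{(1)}$ the Gram matrix $(h(v''_i,v''_j))$ is $-(I - X^*X)$, so that $h|_{F^{(1)}} \ll 0$ is precisely $I - X^*X \ggg 0$. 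The converse assignment, sending a finitary $X$ with $I - X^*X \ggg 0$ to the quaternionic column span of $\left(\begin{smallmatrix} X \\ I \end{smallmatrix}\right)$, is manifestly inverse to this, which gives the asserted identification of $D_0$.

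For the action I would let $g = \left(\begin{smallmatrix} A & B \\ C & D \end{smallmatrix}\right) \in G_0$ act on the left of the representative $\left(\begin{smallmatrix} X \\ I \end{smallmatrix}\right)$, obtaining $\left(\begin{smallmatrix} AX+B \\ CX+D \end{smallmatrix}\right)$, and then normalize by right multiplication by $(CX+D)^{-1}$ --- which preserves the quaternionic column span --- to reach $\left(\begin{smallmatrix} (AX+B)(CX+D)^{-1} \\ I \end{smallmatrix}\right)$. This yields $g \cdot X = (AX+B)(CX+D)^{-1}$, exactly as in Proposition \ref{bded-realization-su}.

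The main obstacle is not geometric but a matter of conventions forced by the noncommutativity of $\H$: one must fix whether $V_\H$ is a left or right $\H$--module, together with the corresponding conjugate--linearity of $h$, so that $X^*X$ is genuinely hermitian and $\ggg 0$ is meaningful, and so that it is right (rather than left) multiplication by $(CX+D)^{-1}$ that preserves the column span used to normalize the representative. Once these are pinned down, finitariness of $X$ (from weak compatibility of $\cF^{(1)}$ with $L$) ensures that $CX+D$ differs from $I$ in only finitely many entries, hence is invertible and reduces every computation to finite quaternionic matrices, so that the argument of Proposition \ref{bded-realization-su} transfers verbatim.
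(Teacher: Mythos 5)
Your proposal is correct and follows the paper's own route essentially verbatim: the paper establishes the proposition by exactly this construction --- injectivity of $\pi_-|_{F^{(1)}}$ because $\ker \pi_- = (V_\H)_+$ is $h$--positive definite, surjectivity by maximality, finitariness of $X_{F^{(1)}}$ from weak compatibility with $L$, the Gram--matrix translation of $h|_{F^{(1)}} \ll 0$ into $I - X^*X \ggg 0$, the column--span converse, and the normalization $\left(\begin{smallmatrix} AX+B \\ CX+D \end{smallmatrix}\right) \mapsto \left(\begin{smallmatrix} (AX+B)(CX+D)^{-1} \\ I \end{smallmatrix}\right)$ by right multiplication. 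One small slip in your closing remark: invertibility of $CX+D$ does not follow from its being a finite perturbation of $I$ (such a matrix can be singular), but from the fact that $g \in G_0$ preserves $h$, so the column span of $\left(\begin{smallmatrix} AX+B \\ CX+D \end{smallmatrix}\right)$ is again a maximal $h$--negative definite subspace and hence projects isomorphically onto $(V_\H)_-$ by the same argument you gave for $F^{(1)}$.
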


Again, there are $q+1$ open $G_0$--orbits on $\gX_{\cF,L}$ corresponding to
nondegenerate signatures:
$$
\begin{aligned}
D_k = G_0(0,F_{(k)},V_\H) \text{ where } &F_{(k)} =
        {\Span_\H}\{v_{-k}, \dots , v_{-1}; v_{k+1}, \dots , v_q\}
                \text{ if } q < \infty, \\
&F_{(k)} = {\Span_\H}\{v_{-k}, \dots , v_{-1}; v_{k+1}, v_{k+2}, \dots\}
                \text{ if } q = \infty, \\
\end{aligned}
$$
More generally the $G_0$--orbits on $\gX_{\cF,L}$
of signature $(a,b,c) = (\pos,\neg,\nul)$ have $a$ and $c$ finite and
$\leqq q$.  We denote them by
\begin{equation}
\begin{aligned}
D_{a,b,c} &= G_0(0,(F_+ + F_- + F_0),V_\H) \text{ where }\\
        &F_0 = \Span_\H\{v_{-c}+v_c, \dots ,v_{-1}+v_1\} \text{ (null) }\\
        &F_+ = \Span_\H\{v_{-c-a}, \dots , v_{-c-1}\} \text{ (positive)}\\
        &F_- = \Span_\H\{v_{c+1}, \dots , v_{c+b}\} \text{ if } q < \infty,\\
        &\phantom{F_- =i }\Span_\H\{v_{c+1},v_{c+2}, \dots\} 
		\text{ if } q = \infty \text{ (negative)}.
\end{aligned}
\end{equation}
The open orbits are the $D_a = D_{a,b,0}\,, 
a < \infty$ and $a+b = q$, i.e. the ones for $c = 0$.  If $q < \infty$ 
there is a unique closed orbit, 
$D_{0,0,q} = \{(0,F^{(1)},V_\H) \in \gX_{\cF,L} \mid
h(F^{(1)},F^{(1)}) = 0\}$; it is in the
closure of every orbit. If $q = \infty$ there is no closed orbit.
\smallskip

The Cayley transforms are given by (\ref{cayley-su}): $c_k(v_j) = v_j$ if
$j \ne \pm k$ and, in the basis $\{v_{-k},v_k\}$ of $\Span_\H\{v_{-k},v_k\}$,
$c_k$ has matrix $\tfrac{1}{\sqrt{2}} \left ( \begin{smallmatrix}
1 & 1 \\ -1 & 1 \end{smallmatrix} \right )$. This sends quaternionic 
subspaces of $V$ to quaternionic subspaces.
As a riemannian symmetric space, the quaternion
Grassmannian $\gX_{\cF,L}$ has rank $q$.  Just as in the complex case
the $G_0$--orbits on $\gX_{\cF,L}$ are the $G_0(c_1\dots c_s c_{s+1}^2\dots
c_{s+t}^2\cF)$, and the open ones are those for which $s = 0$.  If $q < \infty$
then $G_0(c_1\dots c_q\cF)$ is the closed orbit, and if $q = \infty$ then there
is no closed $G_0$--orbit on $\gX_{\cF,L}$.
\smallskip

The maximal lim-compact subgroup of $G_0$ is
$$
\begin{aligned}
&K_0 = Sp(\infty) \times Sp(q) \,\,\,= \bigl ({\lim}_{p\to\infty} Sp(p) \bigr )
	\times Sp(q) \text{ if } q < \infty,\\
&K_0 = Sp(\infty) \times Sp(\infty) = {\lim}_{p,q\to\infty} 
	\bigl ( Sp(p) \times Sp(q) \bigr ) \text{ if } q = \infty.
\end{aligned}
$$
This corresponds to the $h$--orthogonal decomposition
$\H^{\infty,q} \phantom{j}= (V_\H)_+ \oplus (V_\H)_-$.
Let $\cF = (F_k)$ be a generalized flag in $V = \H^{\infty,q}$
that is weakly compatible with $L$.  Let $\cF^{(1)} \in \gX_{\cF,L}$ so that
$D = G_0(\cF^{(1)})$ is an open $G_0$--orbit.  Then we may assume that
$\cF^{(1)}$ is compatible with our choice of $L$, so it fits the decomposition
$\H^{\infty,q} \phantom{j}= (V_\H)_+ \oplus (V_\H)_-$ in the sense that
\begin{equation}\label{so-splitting1-quat}
\cF^{(1)} = (F^{(1)}_k) \text{ where each }
        F^{(1)}_k = (F^{(1)}_k \cap (V_\H)_+) \oplus (F^{(1)}_k \cap (V_\H)_-).
\end{equation}
Then $K_0(\cF^{(1)})$ is the quaternionic analog -- in fact a quaternion form 
-- of  the base cycle when the latter is viewed as a quaternionic manifold.
Concretely, $K_0(\cF^{(1)})$ is the product of ``smaller'' quaternionic
flag manifolds,
\begin{equation} \label{quaternionic-Y-splits}
\begin{aligned}
Y = Y_1 \times Y_2 &\text{ where }\\
   &Y_1 = K_0(\cF^{(1)} \cap (V_\H)_+) = Sp(\infty)(\cF^{(1)}\cap (V_\H)_+) \text{ in } (V_\H)_+ \text{ and }\\
   &Y_2 = K_0(\cF^{(1)} \cap (V_\H)_-) = Sp(q)(\cF^{(1)}\cap (V_\H)_-) \text{ in } (V_\H)_-
\end{aligned}
\end{equation}
where 
$$
\begin{aligned}
&\cF^{(1)} \cap (V_\H)_+ = ((F^{(1)}_1 \cap (V_\H)_+)\subset\dots\subset 
	(F^{(1)}_n \cap (V_\H)_+)), \\
&\cF^{(1)}\cap (V_\H)_-=((F^{(1)}_1 \cap (V_\H)_-)\subset\dots\subset 
	(F^{(1)}_n \cap (V_\H)_-)).
\end{aligned}
$$
The signature sequence $\{(a_k,b_k)\}$, where $h$ has
signature $(a_k,b_k,0)$ on $F^{(1)}_k$, specifies the open orbit in $\gX_{\cF,L}$
and the factors of $Y$.
\smallskip

This shows that the $G$--translates of
$Y$ contained in $D$ correspond to the decompositions
$V_H\ = W_\H' \oplus W_\H''$ where (i) $W_\H'$ is a maximal
positive definite $\H$--subspace such that $(V_\H)_+ \cap W_\H'$ has finite 
codimension in both  $(V_\H)_+$ and $\cap W_\H'$\,, and (ii) $W_\H''$ is a 
maximal negative definite subspace such that $(V_\H)_- \cap W_\H''$ has 
finite codimension in both $(V_\H)_-$ and $W_\H''$\,.
If $\cF^{(1)} = \cF^{(1)} \cap (V_\H)_+$ the correspondence depends only on 
$W_\H'$, and if $\cF^{(1)} = \cF^{(1)} \cap (V_\H)_-$ it depends only on 
$W_\H''$.
Any two such decompositions $V_\H = W_\H' \oplus W_\H''$ are $G$--equivalent.
\smallskip

\begin{definition}{\rm The {\sl positive quaternionic bounded symmetric domain}
$\cB^+_L$ associated to $(V_\H,b,L)$ is the space of all maximal positive
definite subspaces $W_\H' \subset V_\H$ such that $W_\H'~\cap~(V_\H)_+$ has finite
codimension in both $W_\H'$ and $(V_\H)_+$\,.  The {\sl negative quaternionic
bounded symmetric domain} $\cB^-_L$ associated to $(V_\H,b,L)$ is the
space of all maximal negative definite subspaces $W_\H'' \subset~V_\H$ such that
$W_\H''~\cap (V_\H)_-$ has finite codimension in both $W_\H''$
and $(V_\H)_-$\,.}
\end{definition}

As constructed, each element $W_\H' \in \cB^+_L$ is in the $G$--orbit of $(V_\H)_+$.
Relative to the basis $L$ we look at $g = \left ( \begin{smallmatrix}
A & B \\ C & D \end{smallmatrix}\right ) \in G$ such that $gW_\H' \in
\cB^+_L$, in other words such that the column span of $\left (
\begin{smallmatrix} A \\ C  \end{smallmatrix}\right )$ is positive definite.
The column span is preserved under right multiplication by $A$, so the
positive definite condition is 
$\tr{\bigl ( \begin{smallmatrix} I \\ -CA^{-1} \end{smallmatrix}\bigr )} \cdot
\bigl ( \begin{smallmatrix} I \\ CA^{-1} \end{smallmatrix}\bigr ) \ggg 0$.
In other words $gW_\H' \in
\cB^+_L$ simply means that $gW_\H'$ is the column span of an infinite
matrix $\left ( \begin{smallmatrix} I \\ X_1  \end{smallmatrix}\right )$
such that $I - {\tr{X_1}}X_1 \ggg 0$.  Similarly $gW_\H'' \in
\cB^-_L$ simply means that $gW_\H''$ is the column span of an infinite
matrix $\left ( \begin{smallmatrix} X_2 \\ I  \end{smallmatrix}\right )$
such that $I - {\tr{X_2}}X_2 \ggg 0$.  The $G$--stabilizer
of $0 \in \cB^+_L$ is the parabolic $P$ consisting of all $\left (
\begin{smallmatrix} A & B \\ 0 & D  \end{smallmatrix}\right )$,
while the $G$--stabilizer of $0 \in \cB^-_L$ is the opposite parabolic
$\tr P = P^{opp}$ consisting of all 
$\left ( \begin{smallmatrix} A & 0 \\ C & D
\end{smallmatrix}\right )$.  
Reformulating this,

\begin{lemma} \label{bded-dom-so-quat}
Suppose that $G_0 = Sp(\infty,q)$, $q \leqq \infty $.  Then the quaternionic
positive bounded symmetric domain associated to
the triple $(V,G_0,L)$ is
$\cB^+_L \cong \{X_1 \in \H^{\infty \times q} \mid I-{\tr{X_1}}X_1 \ggg 0\}$ in
$G/P$,  and the corresponding quaternionic negative bounded symmetric 
domain for $(V,G_0,L)$ is 
$\cB^-_L \cong \{X_2 \in \H^{q\times \infty} \mid I - {\tr{X_2}}X_2 \ggg 0\}$
in  $G/P^{opp}$.
\end{lemma}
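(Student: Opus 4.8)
The plan is to treat Lemma~\ref{bded-dom-so-quat} as the quaternionic reformulation of the matrix computation carried out in the paragraph immediately preceding it, exactly parallel to the passage from the discussion in Section~\ref{ssec4a} to Lemma~\ref{bded-dom-su} in the complex case. First I would fix the convention that $V_\H$ is a right $\H$--vector space, so that matrices in $G = SL(\infty+q;\H)$ act on the left and the quaternionic column span of a matrix is unchanged under right multiplication by an invertible quaternionic matrix. With this convention the identification $\cB^+_L = \{\,g(V_\H)_+ \mid g \in G \text{ and } g(V_\H)_+ \text{ is } h\text{--positive definite}\,\}$ is exactly the content of the construction preceding the lemma, together with the transitivity supplied as in the argument of Lemma~\ref{wk-compat}: every maximal positive definite $W_\H'$ with $W_\H' \cap (V_\H)_+$ of finite codimension in both is $L$--commensurable to $(V_\H)_+$, hence a $G$--translate of it.

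Next I would produce the matrix coordinate. As in Section~\ref{ssec4a}, the projection $\pi_+ \colon W_\H' \to (V_\H)_+$ is an $\H$--linear isomorphism: it is injective because $\ker \pi_+ = (V_\H)_-$ is $h$--negative definite while $W_\H'$ is $h$--positive definite, and surjective because $W_\H'$ is a maximal positive definite subspace whose intersection with $(V_\H)_+$ has finite codimension. Writing $g = \left(\begin{smallmatrix} A & B \\ C & D \end{smallmatrix}\right)$ in block form relative to $V_\H = (V_\H)_+ \oplus (V_\H)_-$, the invertibility of $A$ that this isomorphism encodes lets me normalize the column span of $\left(\begin{smallmatrix} A \\ C \end{smallmatrix}\right)$ to that of $\left(\begin{smallmatrix} I \\ X_1 \end{smallmatrix}\right)$ with $X_1 = CA^{-1}$, which is finitary because $\cF^{(1)}$ is weakly compatible with $L$. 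Evaluating $h$ on a column $\left(\begin{smallmatrix} x \\ X_1 x \end{smallmatrix}\right)$ gives $x^{*}(I - X_1^{*} X_1)x$, so $h$--positive definiteness of $W_\H'$ is precisely the operator inequality $I - X_1^{*} X_1 \ggg 0$; this also recovers Proposition~\ref{bded-realization-sp-infq}. The stabilizer of the base point $(V_\H)_+$ is the block upper triangular parabolic $P$, so $\cB^+_L$ is realized as an open subset of $G/P$ carrying the coordinate $X_1$. The negative domain $\cB^-_L$ is handled identically, with $\pi_-$ in place of $\pi_+$, $X_2$ in place of $X_1$, and the opposite (block lower triangular) parabolic $P^{opp}$; $h$--orthocomplementation then exhibits $\cB^-_L$ as the conjugate of $\cB^+_L$.

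The main obstacle, and the only place the argument genuinely departs from the complex case, is keeping the quaternionic linear algebra honest. I would need to verify that $h$--positive definiteness of a quaternionic Hermitian form is still detected by $I - X_1^{*} X_1 \ggg 0$ (here $X_1^{*}$ is the quaternionic conjugate transpose and $\ggg 0$ means positive definite as a quaternionic Hermitian operator), which holds because quaternionic Hermitian matrices have real spectra and the usual Gram--Schur reasoning survives noncommutativity; that the normalizing factor $(CX+D)^{-1}$ exists on the domain, so that the linear fractional action of Proposition~\ref{bded-realization-sp-infq} is well defined; and that right $\H$--scalars are used consistently throughout, so that ``column span'' really is a right $\H$--submodule. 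Once these conventions are pinned down, each of the three displayed facts (finitariness of $X_1$, the defining inequality, and the parabolic stabilizers) is routine, and the lemma is the reformulation asserted.
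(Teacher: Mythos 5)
Your proposal is correct and follows essentially the same route as the paper: there the lemma is likewise obtained as a reformulation of the immediately preceding computation, identifying $\cB^\pm_L$ with the $G$--orbits of $(V_\H)_\pm$, normalizing the column span of $\left(\begin{smallmatrix} A \\ C \end{smallmatrix}\right)$ by right multiplication by $A^{-1}$ to obtain the coordinate $X_1 = CA^{-1}$, and reading off the parabolic stabilizers $P$ and $P^{opp}$. Your added care with the right $\H$--module conventions and quaternionic Hermitian positivity only makes explicit what the paper leaves implicit; in particular the ${\tr{X_1}}$ in the statement should indeed be read as the quaternionic conjugate transpose $X_1^{*}$, consistent with Proposition \ref{bded-realization-sp-infq}.
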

The action of $G_0$ on these bounded symmetric domains is linear fractional,
as described in Section \ref{ssec4a} for the complex case.
The proof of Theorem \ref{main-su} is valid here, giving us
the following structure theorem.

\begin{theorem}\label{main-so-quat}
Let $G_0 = Sp(\infty,q)$ with $q \leqq \infty$.  Let $D$ be an open 
$G_0$--orbit $G(\cF^{(1)})$ in the quaternionic flag manifold $\gX_{\cF,L}$.  
Then the positive definite bounded symmetric domain 
all positive definite $G$-translates of $(V_\H)_+$ and the negative definite 
bounded symmetric domain $\cB^-_L$ for $(V,G_0,L)$ is the set of is the
set of all negative definite $G$-translates of $(V_\H)_-$\,.  The
$\cB^\pm_L$ are diffeomorphic. There are three
cases for the structure of the cycle space, as follows.  

If every space $F^{(1)}_k \in \cF^{(1)}$ is positive definite then $\cM_D$
is diffeomorphic to $\cB^+_L$\,.

If every space $F^{(1)}_k \in \cF^{(1)}$ is negative definite then $\cM_D$
is diffeomorphic to $\cB^-_L$\,.

If some space $F^{(1)}_k \in \cF^{(1)}$ is indefinite then $\cM_D$ is
diffeomorphic to $\cB^+_L \times \cB^-_L$\,.
\end{theorem}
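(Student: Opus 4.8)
The plan is to transcribe the proof of Theorem \ref{main-su} into the quaternionic setting, using the product decomposition of the base cycle recorded in (\ref{quaternionic-Y-splits}) and the finitary matrix description of $\cB^\pm_L$ from Lemma \ref{bded-dom-so-quat}. First I would isolate the basic equivalence furnished by that lemma: for $g \in G = SL(\infty+q;\H)$, the translate $g(V_\H)_+$ is $h$--positive definite if and only if $g(V_\H)_+ \in \cB^+_L$; the translate $g(V_\H)_-$ is $h$--negative definite if and only if $g(V_\H)_- \in \cB^-_L$; and both hold simultaneously exactly when $(g(V_\H)_+, g(V_\H)_-) \in \cB^+_L \times \cB^-_L$. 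Since $K_0 = Sp(\infty) \times Sp(q)$ stabilizes each of $(V_\H)_+$ and $(V_\H)_-$, these definiteness conditions are unaffected by right multiplication by $K_0$, which is what lets the base cycle $Y = K_0(\cF^{(1)})$ interact cleanly with the $G$--translation.

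Next I would run the three cases in parallel with the complex argument. Suppose first $\cF^{(1)} = \cF^{(1)} \cap (V_\H)_+$, so $Y = Y_1$ lies in the positive quaternionic Grassmannian. If $g(V_\H)_+$ is positive definite, then $gk(\cF^{(1)})$ is nondegenerate for every $k \in K_0$ and hence lies in $D$, because $D$ is the \emph{only} open $G_0$--orbit of positive definite subspaces (this uniqueness is read off the signature classification of the $D_{a,b,c}$ via Theorem \ref{open-orbit-su}); thus $gY \subset D$, i.e. $gY \in \cM'_D$. Conversely, if $gY \in \cM'_D$ then every member of $gY$ is positive definite, and since $\Span K_0(F^{(1)}) = (V_\H)_+$ for any nonzero $F^{(1)} \in \cF^{(1)}$ by irreducibility of $Sp(\infty)$ on $(V_\H)_+$, the span $\Span\, gY = g(V_\H)_+$ is positive definite. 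Hence $gY \in \cM'_D \Leftrightarrow g(V_\H)_+ \in \cB^+_L$, giving $\cM_D \cong \cB^+_L$. The negative case $\cF^{(1)} = \cF^{(1)} \cap (V_\H)_-$ is identical and yields $\cM_D \cong \cB^-_L$. In the mixed case $\cF^{(1)} \cap (V_\H)_+ \ne \cF^{(1)} \ne \cF^{(1)} \cap (V_\H)_-$, applying the same reasoning to the two factors $Y_1, Y_2$ of $Y$ shows $gY \in \cM'_D$ if and only if $g(V_\H)_+$ is positive definite and $g(V_\H)_-$ is negative definite, so $\cM_D \cong \cB^+_L \times \cB^-_L$.

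Two points then close the argument. I would verify that $\cM'_D$ is connected, so that passing to the topological component of $Y$ loses nothing and $\cM'_D = \cM_D$; this follows because each $\cB^\pm_L$ is a bounded (indeed star-shaped) domain in a quaternionic matrix space by Lemma \ref{bded-dom-so-quat}, and a product of such is connected. Finally, the asserted diffeomorphism $\cB^+_L \cong \cB^-_L$ is induced by $h$--orthocomplementation $W \mapsto W^{\perp_h}$, which sends maximal positive definite subspaces to maximal negative definite ones; over $\H$ there is no ambient complex-analytic structure, so this is recorded as a diffeomorphism rather than (as in the complex case) an antiholomorphic one, which is exactly why the conclusions here read ``diffeomorphic'' instead of ``holomorphically diffeomorphic.''

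The main obstacle I anticipate is not the formal transcription but confirming the two quaternionic inputs the complex proof takes for granted. The first is that $Y = K_0(\cF^{(1)})$ is the correct base cycle: that it is the unique $K_0$--orbit in $D$ arising as the quaternion form of the finite-dimensional base cycles along the exhaustion, obtained as a direct limit exactly as in the proof of Theorem \ref{base-cycle} but with the finitary flag-domain input $Sp(p,r)$ in place of the hermitian $SU(p,q)$. The second is that ``the unique open orbit of a given definiteness type'' must be justified purely from the $Sp(\infty,q)$ signature data, since no complex-analytic uniqueness is available. Once these are established, the non-commutativity of $\H$ enters only through the already-verified finitary descriptions in Lemma \ref{bded-dom-so-quat} and Proposition \ref{bded-realization-sp-infq}, and causes no further difficulty.
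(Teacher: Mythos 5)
Your proposal is correct and follows the paper's own route exactly: the paper proves Theorem \ref{main-so-quat} by asserting that the proof of Theorem \ref{main-su} carries over verbatim, given the product decomposition (\ref{quaternionic-Y-splits}) of the base cycle and the finitary matrix description of $\cB^\pm_L$ in Lemma \ref{bded-dom-so-quat}, which is precisely the transcription you carry out. Your two flagged inputs --- the quaternionic analog of the base-cycle uniqueness and the signature-based uniqueness of the open orbit of a given definiteness type --- are exactly the points the paper handles in the build-up to the theorem (the $D_{a,b,c}$ classification and the quaternion-form statement following (\ref{so-splitting1-quat})), so you have in fact made explicit what the paper leaves implicit.
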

\vskip .5in

\medskip

Department of Mathematics, 

University of California,

Berkeley CA 94720--3840, USA

{\tt jawolf@math.berkeley.edu}

\end{document}